\documentclass[12pt]{elsarticle}
\usepackage{fullpage}
\usepackage[cp1251]{inputenc}
\usepackage[english]{babel}
\usepackage{amsmath}
\usepackage{amssymb}
\usepackage{amsfonts}
\usepackage{mathtools}
\usepackage{dsfont}
\usepackage{rotating}
\usepackage{graphicx}
\usepackage{floatflt,epsfig}
\usepackage{lineno,hyperref}
\usepackage{enumerate}
\usepackage{comment}
\usepackage{colortbl}
\usepackage{array,tabularx,tabulary,booktabs}
\usepackage{longtable}
\usepackage{multirow}
\usepackage{wrapfig}
\usepackage{subcaption}
\usepackage[table]{xcolor}
\newcolumntype{^}{>{\currentrowstyle}}

\journal{Arxiv}
\setcounter{page}{1}
\newtheorem{theorem}{Theorem}[section]
\newtheorem{lemma}[theorem]{Lemma}

\newtheorem{corollary}[theorem]{Corollary}
\newtheorem{proposition}[theorem]{Proposition}

\newtheorem{conjecture}[theorem]{Conjecture}
\newtheorem{remark}[theorem]{Remark}
\newtheorem{example}[theorem]{Example}

\bibliographystyle{elsarticle-num}

\newcommand{\F}{\mathbb{F}}

\begin{document}
\renewcommand{\abstractname}{Abstract}
\renewcommand{\refname}{References}
\renewcommand{\tablename}{Figure.}
\renewcommand{\arraystretch}{0.9}
\thispagestyle{empty}
\sloppy

\begin{frontmatter}
\title{On eigenfunctions and maximal cliques of generalised Paley graphs of square order}

\author[01]{Sergey Goryainov}
\ead{sergey.goryainov3@gmail.com}

\author[02]{Leonid Shalaginov}
\ead{44sh@mail.ru}

\author[03]{Chi Hoi Yip}
\ead{kyleyip@math.ubc.ca}
 
\address[01] {School of Mathematical Sciences, Hebei Workstation for Foreign Academicians, \\Hebei Normal University, Shijiazhuang  050024, P.R. China}
\address[02] {Chelyabinsk State University, 129 Bratiev Kashirinykh st.,\\Chelyabinsk 454021, Russia}
\address[03] {Department of Mathematics, University of British Columbia,\\ Vancouver V6T 1Z2, Canada}

\begin{abstract}
Let GP$(q^2,m)$ be the $m$-Paley graph defined on the finite field with order $q^2$. We study eigenfunctions and maximal cliques in generalised Paley graphs GP$(q^2,m)$, where $m \mid (q+1)$. In particular, we explicitly construct maximal cliques of size $\frac{q+1}{m}$ or $\frac{q+1}{m}+1$ in GP$(q^2,m)$, and 
show the weight-distribution bound on the cardinality of the support of an eigenfunction is tight for the smallest eigenvalue $-\frac{q+1}{m}$ of GP$(q^2,m)$. These new results extend the work of Baker et. al and Goryainov et al. on Paley graphs of square order. We also study the stability of the Erd\H{o}s-Ko-Rado theorem for GP$(q^2,m)$ (first proved by Sziklai).
\end{abstract}

\begin{keyword}
generalised Paley graph; maximal clique; eigenfunction; affine plane; orthogonal array;  Erd\H{o}s-Ko-Rado theorem
\vspace{\baselineskip}
\MSC[2010] 05C25\sep 51E15 (11T30\sep 05E30\sep 30C10 \sep 05C69)
\end{keyword}
\end{frontmatter}

\section{Introduction}

Throughout the paper, let $p$ be an odd prime and $q$ a power of $p$. Let $\F_q$ be the finite field with $q$ elements, $\F_q^+$ be its additive group, and $\F_q^*=\F_q \setminus \{0\}$ be its multiplicative group. 

Generalised Paley graphs are natural generalisations of the well-studied Paley graphs. They were first introduced by
Cohen \cite{C88} in 1988, and reintroduced by Lim and Praeger \cite{LP09} in 2009. Let $m > 1$ be a positive integer and let $q$ be an odd prime power such that $q \equiv 1 \pmod {2m}$. The {\em $m$-Paley graph} on $\mathbb{F}_q$, denoted $\mathrm{GP}(q,m)$, is the Cayley graph $\operatorname{Cay}(\mathbb{F}_q^+,(\mathbb{F}_q^*)^m)$, where
$(\mathbb{F}_q^*)^m$ is the set of $m$-th powers in $\mathbb{F}_q^*$. More explicitly, the vertex set of $\mathrm{GP}(q,m)$ is given by $\F_q$, and two vertices are adjacent if and only if their difference is an $m$-th power in $\mathbb{F}_q^*$. Note that the condition $q \equiv 1 \pmod {2m}$ guarantees that $\mathrm{GP}(q,m)$ is undirected, so we can talk about its cliques. Note that $2$-Paley graphs are precisely Paley graphs and $3$-Paley graphs are often referred to as cubic Paley graphs. Unsurprisingly, it often requires non-trivial efforts to extend results on Paley graphs to generalised Paley graphs, partly because Paley graphs are known to have the highest degree of symmetry \cite{P01}.

The problem of finding the clique number of a generalised Paley graph remains widely open in general; see \cite{HP21, Yip21, Y21, Yip22} for recent (minor) improvements on the upper bound on the clique number. It is also interesting to classify maximum cliques and maximal cliques in generalised Paley graphs. Recall that a clique in a graph is called \emph{maximum} if there is no clique of larger size in this graph, while a clique in a graph is called \emph{maximal} if it is not included in a clique of larger size. The clique number of a graph $X$,  denoted by $\omega(X)$, is the size of a maximum clique.

Throughout the paper, we consider generalised Paley graphs $\mathrm{GP}(q^2,m)$, where $m \mid (q+1)$ and $m>1$. These graphs belong to the family of semi-primitive cyclotomic strongly regular graphs (see \cite[Section 4]{Xiang12}) and have attracted the attention of many researchers. Given $m \mid (q+1)$ and $m>1$, the clique number of the generalised Paley graph $\mathrm{GP}(q^2,m)$ is known to be $q$; moreover, we have the following simple characterization of its maximum cliques. 

\begin{lemma}[{\cite{B84}, \cite[Theorem 1.2]{S99}}]\label{TSziklai}
Let $q$ be an odd prime power and $m$ a divisor of $(q+1)$ such that $m > 1$, then in the generalised Paley graph $\mathrm{GP}(q^2,m)$, the only maximum clique containing $0,1$ is the subfield $\mathbb{F}_q$; in particular, each maximum clique in $\mathrm{GP}(q^2,m)$ is the image of the subfield $\F_q$ under an affine transformation.
\end{lemma}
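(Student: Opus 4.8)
The plan is to translate the clique condition into a statement about the directions determined by a $q$-point set in the affine plane $\mathrm{AG}(2,q)$, and then to invoke the polynomial (``lacunary'', R\'edei-type) method to pin that set down; this follows the strategy of Blokhuis \cite{B84} for $m=2$ and of Sziklai \cite{S99} for general $m\mid q+1$.

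Since the clique number of $\mathrm{GP}(q^2,m)$ is $q$ and $\F_q$ is a clique of that size (here one uses $\F_q^*\subseteq(\F_{q^2}^*)^m$, valid because $m\mid q+1$), it is enough to show that a clique $C\subseteq\F_{q^2}$ with $|C|=q$ and $0,1\in C$ must equal $\F_q$. (The assertion about an arbitrary maximum clique $C_0$ then follows: picking $c_0\in C_0$ and $c_1\in C_0\setminus\{c_0\}$, the affine map $x\mapsto(c_1-c_0)^{-1}(x-c_0)$ is an automorphism of $\mathrm{GP}(q^2,m)$ because $c_1-c_0\in(\F_{q^2}^*)^m$, and it sends $C_0$ to a maximum clique containing $0$ and $1$.) Write $H=(\F_{q^2}^*)^m$, so that ``$C$ is a clique'' reads $c-c'\in H$ for all distinct $c,c'\in C$. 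Regard $\F_{q^2}$ as a $2$-dimensional vector space over $\F_q$; its $q+1$ directions (one-dimensional $\F_q$-subspaces) correspond bijectively to the group $\mu_{q+1}$ of $(q+1)$-st roots of unity in $\F_{q^2}^*$ via $\F_q v\leftrightarrow v^{q-1}$, and, because $m\mid q+1$, one checks that $H=\{x\in\F_{q^2}^*:x^{q-1}\in\Lambda\}$ where $\Lambda\le\mu_{q+1}$ is the unique subgroup of index $m$, of order $(q+1)/m$. Hence, as a set of $q$ points of $\mathrm{AG}(2,q)$ through the origin, $C$ determines only directions lying in $\Lambda$; in particular it determines at most $(q+1)/m\le(q+1)/2$ directions.

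It remains to deduce that such a $C$ is a line, i.e.\ $C=\alpha\F_q$ for some $\alpha\in\F_{q^2}^*$; then $1\in C$ forces $\alpha\in\F_q$, whence $C=\F_q$. This is the crux, and ``$C$ determines at most $(q+1)/2$ directions'' is by itself \emph{not} sufficient: by R\'edei's theorem and the refinement of Blokhuis, Ball, Brouwer, Storme and Sz\H{o}nyi, a non-line $q$-set of $\mathrm{AG}(2,q)$ can determine far fewer than $(q+1)/2$ directions --- e.g.\ a subfield grid, when $q$ is a square, determines only about $\sqrt q$ of them. What rescues the argument is that the directions of $C$ form not merely a small set but a \emph{subgroup} $\Lambda$ of $\mu_{q+1}$. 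One exploits this through R\'edei-type polynomials: for each $c_0\in C$ the polynomial $\prod_{c\in C}\bigl(X-(c-c_0)\bigr)\in\F_{q^2}[X]$ is divisible by $X$ and all of its remaining roots $c-c_0$ satisfy $(c-c_0)^{(q^2-1)/m}=1$; combining these relations as $c_0$ ranges over $C$, and using that $\Lambda$ is multiplicatively closed, one forces the $c_0=0$ polynomial to be a linearised binomial $X^{q}-\lambda X$ with $\lambda^{q+1}=1$, whose root set is precisely a one-dimensional $\F_q$-subspace $\alpha\F_q$.

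The main obstacle is exactly this final step. The passage from ``few directions'' to ``a line'' genuinely fails without the multiplicative input, so the heart of the argument is the lacunary-polynomial analysis that upgrades ``the directions of $C$ form a subgroup of index $m>1$'' to ``$C$ is an $\F_q$-line''; everything else --- the reduction to cliques through $0,1$, the dictionary relating $H$, directions and $\Lambda$, and the reduction of the general maximum clique to this case --- is routine.
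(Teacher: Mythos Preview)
The paper does not give its own proof of this lemma; it is quoted from \cite{B84} and \cite{S99}, with the remark that ``Sziklai's proof \ldots\ is a nice application of the polynomial method'' and that an alternative proof via character sums appears in \cite{AY21}. Your proposal follows the polynomial-method route of those cited references, so at the level of strategy it agrees with what the paper invokes. The preliminary reductions are all correct: normalising so that $0,1\in C$ via an affine automorphism; identifying the condition $c-c'\in(\F_{q^2}^*)^m$ with the statement that the $\F_q$-direction of $c-c'$ lies in the index-$m$ subgroup $\Lambda\le\mu_{q+1}$ via $v\mapsto v^{q-1}$; and observing (rightly) that a bare direction count cannot finish the job because of subfield configurations.

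Where your write-up stops being a proof is the final step. The sentence ``combining these relations as $c_0$ ranges over $C$, and using that $\Lambda$ is multiplicatively closed, one forces the $c_0=0$ polynomial to be a linearised binomial $X^q-\lambda X$'' is the \emph{conclusion} of Sziklai's argument, not a derivation of it; no mechanism is given for why the R\'edei polynomial $\prod_{c\in C}(X-c)$ must collapse to an $\F_q$-linearised binomial. That passage is a genuine lacunary-polynomial computation in \cite{S99} (or, in the alternative approach, a character-sum estimate as in \cite{AY21}), and it does not follow formally from the divisibility relations you wrote down without further work. You are candid that this is the crux and that everything else is routine; but as it stands, the proposal locates the hard step rather than carrying it out, which is also exactly what the paper itself does by deferring to the cited references.
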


Lemma~\ref{TSziklai} was first proved by Blokhuis \cite{B84} on Paley graphs of square order (i.e., when $m=2$), and was later extended by Sziklai for all $m \geq 3$ which divides $(q+1)$. We remark that the assumption $m | (q + 1)$ in Lemma~\ref{TSziklai} is crucial. In the case $m \nmid (q + 1)$, it is easy to verify that the subfield $\mathbb{F}_q$ does not form a clique in $\mathrm{GP}(q^2, m)$ \cite[Lemma 2.2]{Y21};
moreover, Yip \cite{Y21} recently proved that $\omega(\mathrm{GP}(q^2, m)) \le q - 1$. Therefore, it is
unlikely that there is a nice characterization of maximum cliques in $\mathrm{GP}(q^2, m)$, where $m \nmid (q + 1)$; in fact, it is not even known what should be the right order of magnitude of $\omega(\mathrm{GP}(q^2, m))$.

Lemma~\ref{TSziklai} can be regarded as the Erd\H{o}s-Ko-Rado (EKR) theorem for the generalised Paley graph $\mathrm{GP}(q^2,m)$. If we view cliques which are images of an affine transformation of the subfield $\F_q$ as {\em canonical cliques}, then Lemma~\ref{TSziklai} states that each maximum clique is canonical; see the related discussion in \cite{AGLY22} and\cite[Section 5.9]{GM15}. Sziklai's proof of Lemma~\ref{TSziklai} is a nice application of the polynomial method, and a new simple proof is given in \cite{AY21} using character sums and finite geometry. Both these proofs implicitly viewed $\mathrm{GP}(q^2,m)$ geometrically. In particular, an explicit isomorphism between $\mathrm{GP}(q^2,m)$ and block graphs of orthogonal array $OA(\frac{q+1}{m},q)$ coming from the point-line incidence structure of the affine Galois plane $AG(2,q)$ is discussed in \cite{AGLY22}. In the case $m>(q+1)/(\sqrt{q}+1)$, a simple self-contained proof of Lemma~\ref{TSziklai} is given in \cite[Corollary 8]{AGLY22}. We will borrow ideas from these three different viewpoints to study maximal cliques. 

The next step to understand the structure of large cliques in $\mathrm{GP}(q^2,m)$ is to study the {\em stability} of canonical cliques: if a clique has a size close to the clique number, is it guaranteed that such a clique must be contained in a canonical clique? Such a result is important in extremal combinatorics (in particular Hilton-Milner type results \cite{HM67}) and finite geometry \cite[Page 548]{S99}. The next lemma describes the best-known result; see also \cite[Corollary 4.3]{AY21}. 

\begin{lemma}[{\cite[Theorem 1.3]{S99}}] \label{embed}
Let $q$ be an odd prime power and $m$ a divisor of $(q+1)$ such that $m \geq 3$. If $C$ is a clique in the generalised Paley graph $\mathrm{GP}(q^2,m)$, such that $0,1 \in C$, and $|C|>q-(1-1/m)\sqrt{q}$, then $C \subset \F_q$. 
\end{lemma}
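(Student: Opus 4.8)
\medskip\noindent{\bf Proof plan.~}The plan is to recast the statement geometrically, in the model of $\mathrm{GP}(q^2,m)$ from \cite{AGLY22}: view $\F_{q^2}$ as a two-dimensional $\F_q$-vector space, identify its elements with the points of $AG(2,q)$, and note that $x\sim y$ in $\mathrm{GP}(q^2,m)$ exactly when the line of $AG(2,q)$ through $x,y$ has one of a fixed set $\mathcal D$ of $|\mathcal D|=(q+1)/m$ \emph{canonical} directions --- namely $\mathcal D$ is the image of $(\F_{q^2}^*)^m$ under $x\mapsto x^{q-1}$, so under the usual identification of the line at infinity $PG(1,q)$ with $\mu_{q+1}$ it is the subgroup $\mu_{(q+1)/m}$ of index $m$, and the subfield $\F_q$ is the canonical line $\ell_0$ through $0$ and $1$. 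In this language, a clique $C$ with $0,1\in C$ is a point set that contains two points of $\ell_0$ and all of whose determined directions lie in $\mathcal D$, and the claim becomes: if $|C|>q-(1-1/m)\sqrt q$, then $C$ lies on a line. This suffices, because a line carrying $C$ has a direction determined by $C$, hence a canonical one, and being the line through $0$ and $1$ it must be $\ell_0=\F_q$; incidentally this already recovers the uniqueness part of Lemma~\ref{TSziklai}.

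I would then argue by contradiction: suppose $C$ is not contained in a line. Since $m\ge 3$, $\mathcal D$ has only $(q+1)/m\le(q+1)/3$ directions, well under the threshold in R\'edei's theorem on directions, so a point set as large as $C$ ``ought not'' to determine so few directions unless it lies on a line; this is a direction problem of R\'edei type, and I would run the R\'edei polynomial argument. Pick affine coordinates so that the vertical direction $(0{:}1)\notin\mathcal D$; then the vertical direction is not determined by $C$ either, so $C$ is the graph $\{(a,f(a)):a\in A\}$ of a function $f$ on some $A\subseteq\F_q$ with $|A|=|C|$, and we set $R_C(X,Y)=\prod_{a\in A}(X+aY-f(a))\in\F_q[X,Y]$, monic of degree $|C|$ in $X$. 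For every affine slope $y$ not determined by $C$ --- and at least $q-|\mathcal D|=q-(q+1)/m$ of them are not --- the values $f(a)-ay$ are pairwise distinct, so $R_C(X,y)$ is a product of $|C|$ distinct linear factors over $\F_q$ and hence divides $X^q-X$. Dividing with remainder, $X^q-X=R_C(X,Y)\,S(X,Y)+T(X,Y)$ with $\deg_X T<\deg_X R_C=|C|$ and $S,T\in\F_q[X,Y]$, every $\F_q[Y]$-coefficient of $T$ vanishes at each of those at least $q-(q+1)/m$ slopes. If all these coefficients vanish identically, then $R_C\mid X^q-X$ in $\F_q[X,Y]$, so $R_C(X,y)$ is squarefree for all $y$, $C$ determines no direction at all, and $|C|\le 1$ --- absurd. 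Otherwise, since the coefficient of $X^{|C|-1-l}$ in $T$ has $Y$-degree at most $q-|C|+1+l$ yet has at least $q-(q+1)/m$ roots, either $|C|\le(q+1)/m+1$ (and we are done), or a long initial run of these coefficients vanishes identically, which says precisely that $R_C$ \emph{almost} divides $X^q-X$: the remainder $T$ has $X$-degree much smaller than $|C|$.

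The crux, and the step I expect to be hardest, is to show this near-divisibility forces $|C|\le q-(1-1/m)\sqrt q$. A naive degree count is not enough --- when $|C|$ is close to $q$ the remainder $T$ is ``small'' and its a priori leading coefficients vanish for structural, not accidental, reasons --- so one must pass to the lacunary polynomial associated with $(X^q-X)/R_C(X,Y)$ and invoke R\'edei's structure theorem for fully reducible lacunary polynomials over $\F_q$, in the form refined by Sz\H{o}nyi to handle point sets of size below $q$. The $\sqrt q$ saving, and the exact constant $1-1/m$, come out of feeding into that machinery the fact that the set of non-determined slopes is the complement of a coset of the subgroup $\mu_{(q+1)/m}$, which forces extra divisibility in the polynomials at hand; making this quantitative is the technical heart of Sziklai's argument. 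A structurally different route, closer to \cite{AY21}, would replace the R\'edei-polynomial bookkeeping by character sums: for a multiplicative character $\chi$ of order $m$ on $\F_{q^2}$ one has $|C|(|C|-1)=\frac1m\sum_{j=0}^{m-1}\ \sum_{c,c'\in C,\ c\ne c'}\chi^j(c-c')$, and the non-principal terms are controlled by comparing $C$ with $C\cap\F_q$; there $\sqrt q$ is the Weil error term, and once again pinning down the sharp constant is where the real effort goes.
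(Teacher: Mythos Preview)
The paper does not give its own proof of this lemma: it is quoted as \cite[Theorem 1.3]{S99}, and the only justification offered is the sentence ``Lemma~\ref{embed} is a consequence of the stability of the direction set determined by point sets in the affine plane $AG(2,q)$ (see \cite[Theorem 4]{Sz96} and \cite[Theorem 3.1]{S99}).'' So there is nothing in the paper to compare against beyond that pointer.

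Your plan is exactly a reconstruction of that cited route: the geometric model identifying cliques with point sets determining at most $(q+1)/m$ directions, the R\'edei polynomial, and the Sz\H{o}nyi stability refinement for sets of size below $q$ are precisely the ingredients of \cite{Sz96} and \cite{S99} that the paper invokes. The setup you give (choosing coordinates so the vertical direction is non-canonical, writing $C$ as a graph, forming $R_C$, and exploiting that $R_C(X,y)\mid X^q-X$ for each non-determined slope $y$) is correct and standard. You are also right that the naive degree count alone does not yield the constant $1-1/m$, and that the sharp bound comes from the lacunary-polynomial structure theorem applied to the quotient; this is indeed where Sziklai's argument lives, and you have located it accurately. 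The alternative character-sum approach you sketch is genuinely different from the cited proof and closer to \cite{AY21}; it would give the right order of magnitude but, as you note, extracting the exact constant $1-1/m$ by that method is not immediate.

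In short: your plan matches the approach the paper points to, with no gaps in the strategy; what remains is the execution of the Sz\H{o}nyi--Sziklai lacunary-polynomial step, which you correctly flag as the technical heart and do not attempt to reproduce in detail.
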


Lemma~\ref{embed} is a consequence of the stability of the direction set determined by point sets in the affine plane $AG(2,q)$ (see \cite[Theorem 4]{Sz96} and \cite[Theorem 3.1]{S99}). It is conjectured that the lower bound on $|C|$ can be improved significantly. In particular, when $m=2$, it is conjectured in \cite{BEHW96, GKSV18, GMS22} that there is no maximal clique with size between $\frac{q+3}{2}$ and $q$ (exclusive). Equivalently, in the case $m=2$, it is conjectured that the lower bound on $|C|$ can be improved to $\frac{q+3}{2}$; however, even the weaker bound $q-\sqrt{q}/2$ stated in Lemma~\ref{embed} is not known to be true. For $m$-Paley graphs, it is reasonable to conjecture that the lower bound can be improved to $q/m+O(1)$. While this is out of reach and is not always true (see Example~\ref{counterexample} for counterexamples), it is still believable that the lower bound $q-O(\sqrt{q})$ could be improved significantly. We provide a significant improvement on Lemma~\ref{embed} in  Corollary~\ref{corstable}, provided that $m>\sqrt{q}$.

In \cite{BEHW96} and \cite{GKSV18}, maximal but not maximum cliques of Paley graphs of square order were constructed. A reinterpretation of these constructions was given in \cite{GMS22}. In \cite{GKSV18}, it is also shown that the weight-distribution bound (see Section~\ref{wdb}) on the minimum cardinality of the support of an eigenfunction is tight for non-principal eigenvalues of Paley graphs of square order. It is known that the tightness of the weight-distribution bound is closely to related to structures of induced subgraphs; we refer to \cite{KMP16,SV21} for a general discussion. Motivated by these results, in this paper, we construct new families of cliques in $\mathrm{GP}(q^2,m)$, and generalise the above results on Paley graphs of square order to $\mathrm{GP}(q^2,m)$. 

Inspired by Lemma \ref{TSziklai} and Lemma~\ref{embed}, maximal cliques with subfield or subspace structure in Cayley graphs over fields are considered in \cite{Y22}. For example, an explicit construction of maximal but not maximum cliques of size $q^2$ in cubic Paley graphs of order $q^6$ was given: the construction is simply the subfield with $q^2$ elements \cite[Theorem 1.5]{Y22}. Interestingly, we shall see in Example~\ref{counterexample} that this special family of cliques shows that our main results cannot be improved significantly since it provides an obstruction in extending the previous works on Paley graphs to generalised Paley graphs.

Our main results are Theorem~\ref{thm: WDBtight}, Theorem~\ref{thm: nearlymaximal}, and Theorem~\ref{thm: stable}. We will combine different ideas from algebraic graph theory, finite geometry, group theory, and number theory.

The paper is organized as follows. In Section~\ref{prelim}, we recall some basic terminologies and give some preliminary results. In Section~\ref{sec: geometry}, we study properties of generalised Paley graphs from the viewpoint of finite geometry. In Theorem~\ref{thm: WDBtight} of Section~\ref{sec: construction}, we show the weight-distribution bound is tight for the smallest eigenvalue $-\frac{q+1}{m}$ of the generalised Paley graph $\mathrm{GP}(q^2,m)$. We also construct new families of cliques in generalised Paley graphs and discuss their correspondences in Section~\ref{sec: construction}. We will discuss the maximality of these cliques in Section~\ref{partialresult} and prove Theorem~\ref{thm: nearlymaximal}. Finally, in Section~\ref{stability}, we study the stability of canonical cliques and improve Lemma~\ref{embed} under extra conditions in Theorem~\ref{thm: stable}.

\section{Preliminaries}\label{prelim}
In this section we list some preliminary definitions and results.
\subsection{The weight-distribution bound for strongly regular graphs} \label{wdb}
A $k$-regular graph on $v$ vertices is called \emph{strongly regular} with parameters $(v,k,\lambda,\mu)$ if any two adjacent vertices have $\lambda$ common neighbours and any two distinct non-adjacent vertices have $\mu$ common neighbours. 
If $X$ is a strongly regular graph, then its complement is also a strongly regular
graph. A strongly regular graph $X$ is \emph{primitive} if both $X$ and its complement
are connected. If $X$ is not primitive, we call it \emph{imprimitive}. The imprimitive
strongly regular graphs are exactly the disjoint unions of complete graphs and
their complements, namely, the complete multipartite graph. We focus on primitive strongly regular graphs.

\begin{lemma}[{\cite[Theorem 5.2.1]{GM15}}]\label{EigenvaluesSRG}
If $X$ is a primitive strongly regular graph with parameters $(v,k,\lambda,\mu)$ and  
$$
\Delta:=\sqrt{(\lambda-\mu)^2+4(k-\mu)},
$$
then $X$ has exactly three eigenvalues 
$$
k,~~~\theta_1 = \frac{\lambda-\mu+\Delta}{2},~~~\theta_2 = \frac{\lambda-\mu-\Delta}{2},
$$
with respective multiplicities
$$
m_k = 1,~~~m_{\theta_1} = -\frac{(v-1)\theta_2+k}{\theta_1-\theta_2},~~~
m_{\theta_2} = \frac{(v-1)\theta_1+k}{\theta_1-\theta_2}.
$$
\end{lemma}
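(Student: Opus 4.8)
The plan is to read off everything from the defining matrix identity of a strongly regular graph. Let $A$ be the adjacency matrix of $X$, and let $I$ and $J$ denote the identity matrix and the all-ones matrix of order $v$. Counting walks of length two between a pair of vertices, split according to whether the two vertices coincide, are adjacent, or are non-adjacent, gives
$$
A^2 = kI + \lambda A + \mu(J - I - A),
$$
which I would rewrite as
$$
A^2 - (\lambda - \mu)A - (k - \mu)I = \mu J.
$$
Since $X$ is $k$-regular, the all-ones vector $\mathbf{1}$ satisfies $A\mathbf{1} = k\mathbf{1}$, so $k$ is an eigenvalue; and since $X$ is primitive it is in particular connected, so the Perron--Frobenius theorem yields that $k$ is the largest eigenvalue and has multiplicity $m_k = 1$. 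As $A$ is symmetric, any eigenvector for an eigenvalue $\theta \neq k$ lies in the orthogonal complement of $\mathbf{1}$.

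Next I would determine the remaining eigenvalues. If $Au = \theta u$ with $u \neq 0$ and $u \perp \mathbf{1}$, then $Ju = 0$, so applying the identity above to $u$ gives $\theta^2 - (\lambda - \mu)\theta - (k - \mu) = 0$. Hence every eigenvalue other than $k$ is a root of this quadratic, i.e.\ equals one of
$$
\theta_{1,2} = \frac{(\lambda - \mu) \pm \sqrt{(\lambda-\mu)^2 + 4(k-\mu)}}{2} = \frac{\lambda - \mu \pm \Delta}{2}.
$$
To confirm that $X$ has \emph{exactly} three distinct eigenvalues, I would invoke primitivity again: for a primitive strongly regular graph one has $0 < \mu < k$, so $\Delta^2 = (\lambda-\mu)^2 + 4(k-\mu) > 0$ and hence $\theta_1 > \theta_2$; moreover a connected regular graph with at most two distinct eigenvalues is complete, hence imprimitive, which is excluded. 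Therefore both $\theta_1$ and $\theta_2$ genuinely occur.

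Finally I would compute the multiplicities $m_{\theta_1}$ and $m_{\theta_2}$ from two linear relations: the dimension count $1 + m_{\theta_1} + m_{\theta_2} = v$, and the trace identity $k \cdot 1 + \theta_1 m_{\theta_1} + \theta_2 m_{\theta_2} = \operatorname{tr}(A) = 0$, the latter because $X$ is loopless. Eliminating one unknown and dividing by $\theta_1 - \theta_2 = \Delta \neq 0$ gives
$$
m_{\theta_1} = -\frac{(v-1)\theta_2 + k}{\theta_1 - \theta_2}, \qquad m_{\theta_2} = \frac{(v-1)\theta_1 + k}{\theta_1 - \theta_2},
$$
as stated. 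The only step requiring real care is the ``exactly three eigenvalues'' assertion --- that neither $\theta_1$ nor $\theta_2$ is missing and that they are distinct --- which is precisely where primitivity is used; the remainder is a routine manipulation of $A$, $I$, and $J$.
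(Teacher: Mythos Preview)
The paper does not supply its own proof of this lemma: it is quoted verbatim as \cite[Theorem 5.2.1]{GM15} and used as a black box. Your argument is correct and is exactly the standard textbook derivation (the matrix identity $A^2 = kI + \lambda A + \mu(J-I-A)$, restriction to $\mathbf{1}^\perp$, then the trace/dimension system for the multiplicities), which is also how the cited reference proves it, so there is nothing to compare.
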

Note that $\theta_2 < 0 < \theta_1$ holds. For the primitive strongly regular graph $X$ from the above lemma, the matrix
$$
\left(
  \begin{array}{c|cc}
    1 & k & v-1-k\\
    m_{\theta_1} & \theta_1 & -1-\theta_1 \\
    m_{\theta_2} & \theta_2 & -1-\theta_2\\
  \end{array}
\right).
$$
is called the \emph{modified matrix of eigenvalues}. The first column gives the dimensions of the eigenspaces (i.e., the multiplicities of the eigenvalues); the second column contains the eigenvalues of the graph, and the third gives the eigenvalues of its complement.

Let $\theta$ be an eigenvalue of a graph $X$. A real-valued function $f$ on the vertex set of $X$ is called an \emph{eigenfunction}
of the graph $X$ corresponding to the eigenvalue $\theta$ (or a \emph{$\theta$-eigenfunction} of $X$), if $f \not \equiv 0$ and
for any vertex $\gamma$ in $X$ the condition
\begin{equation}\label{LocalCondition}
\theta\cdot f(\gamma)=\sum_{\substack{\delta\in{X(\gamma)}}}f(\delta)
\end{equation}
holds, where $X(\gamma)$ is the set of neighbours of the vertex $\gamma$.
Although eigenfunctions of graphs receive less attention of researchers in contrast to their eigenvalues, there are still tons of related literature. We refer to the recent survey \cite{SV21} for a summary of results on the problem of finding the minimum cardinality of support of eigenfunctions of graphs and characterising the optimal eigenfunctions.

The following lemma gives a lower bound for the number of non-zeroes (i.e., the cardinality of the support) for an eigenfunction of a strongly regular graph. In fact, this is a special case of a more general result for distance-regular graphs \cite[Section 2.4]{KMP16}. 
\begin{lemma}\label{WDB}
Let $X$ be a primitive strongly regular graph with parameters $(v,k,\lambda,\mu)$ and let $\theta$ be a non-principal eigenvalue of $X$. Then an eigenfunction of $X$ corresponding to the eigenvalue $\theta$ has at least $$1+|\theta|+\bigg|\frac{(\theta-\lambda)\theta-k}{\mu}\bigg|$$ non-zeroes.
\end{lemma}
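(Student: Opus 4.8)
The plan is to run the standard local counting argument for eigenfunctions, which works cleanly here because a primitive strongly regular graph is connected of diameter two. Let $f$ be a $\theta$-eigenfunction of $X$ and let $S$ be its support. The first step is a normalisation: pick a vertex $x$ at which $|f|$ attains its maximum value, and rescale $f$ by a suitable nonzero real so that $f(x)=1$; rescaling preserves both the eigenfunction property and the set $S$, and now $|f(y)|\le 1$ for every vertex $y$. I would then split $S$ according to the position relative to $x$, writing $S=(S\cap\{x\})\sqcup(S\cap X(x))\sqcup(S\cap Y)$, where $X(x)$ is the neighbourhood of $x$ and $Y$ is the set of vertices at distance $2$ from $x$ (equivalently, the non-neighbours of $x$ other than $x$ itself).

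The three pieces are then bounded from below separately. Trivially $|S\cap\{x\}|=1$. Evaluating the local condition~\eqref{LocalCondition} at $x$ gives $\sum_{y\in X(x)}f(y)=\theta f(x)=\theta$, so the triangle inequality together with $|f(y)|\le 1$ yields $|S\cap X(x)|\ge|\theta|$. For the last piece, since $X$ is connected and $k$-regular the eigenvalue $k$ is simple with the all-ones vector as eigenvector, and $\theta\ne k$ because $\theta$ is non-principal, so $f$ is orthogonal to the all-ones vector; hence $\sum_y f(y)=0$ and therefore $\sum_{y\in Y}f(y)=0-f(x)-\theta=-(1+\theta)$, which forces $|S\cap Y|\ge|1+\theta|$. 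Adding the three bounds gives $|S|\ge 1+|\theta|+|1+\theta|$.

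It remains to rewrite $|1+\theta|$ in the stated form. Since $\theta$ is non-principal it is a root of $t^2-(\lambda-\mu)t-(k-\mu)=0$ (equivalently, apply $A^2=kI+\lambda A+\mu(J-I-A)$ to $f$ and use $Jf=0$), so $(\theta-\lambda)\theta-k=\theta^2-\lambda\theta-k=-\mu(1+\theta)$; here $\mu\ne 0$ because $X$ is primitive, so $\frac{(\theta-\lambda)\theta-k}{\mu}=-(1+\theta)$ and the bound becomes $1+|\theta|+\bigl|\frac{(\theta-\lambda)\theta-k}{\mu}\bigr|$, as claimed. The argument is almost entirely routine; the one place that needs attention is the normalisation, where $x$ must be chosen as a vertex of maximum absolute value (not an arbitrary support vertex), since otherwise the triangle-inequality estimates on $|S\cap X(x)|$ and $|S\cap Y|$ come out weaker than $|\theta|$ and $|1+\theta|$.
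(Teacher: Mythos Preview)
Your proof is correct and is precisely the diameter-$2$ specialisation of the standard weight-distribution argument for distance-regular graphs: partition the support by distance from a vertex of maximum absolute value, apply the eigenvector equation at that vertex, and use orthogonality to the all-ones vector for the distance-$2$ part. The paper does not write this out; it simply cites \cite[Corollary~1]{KMP16} together with the observation that a primitive strongly regular graph is distance-regular of diameter $2$ with $a_1=\lambda$, $b_0=k$, $c_2=\mu$, so the underlying method is the same --- you have supplied the self-contained argument where the paper defers to the reference (and your final identity $(\theta-\lambda)\theta-k=-\mu(1+\theta)$ is exactly the computation the paper performs in Corollary~\ref{WDBsrg}).
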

\begin{proof}
If follows from \cite[Corollary 1]{KMP16} and the fact that a primitive strongly regular graph with parameters $(v,k,\lambda,\mu)$ is a distance-regular graph of diameter $2$ with $a_1 = \lambda$, $b_0 = k$ and $c_2 = \mu$. $\square$
\end{proof}

\medskip

The lower bound given in Lemma~\ref{WDB} is known as the {\em weight-distribution bound}. Next we explicitly determine the weight-distribution bound in terms of the corresponding eigenvalue.

\begin{corollary}\label{WDBsrg}
Let $X$ be a primitive strongly regular graph with non-principal eigenvalues $s$ and $r$, such that $s<0<r$. Then an eigenfunction of $X$ corresponding to the eigenvalue $r$ has at least $2(r+1)$ non-zeroes, and an eigenfunction corresponding to the eigenvalue $s$ has at least $-2s$ non-zeroes.
\end{corollary}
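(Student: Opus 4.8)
The plan is to derive Corollary~\ref{WDBsrg} directly from Lemma~\ref{WDB} by simplifying the weight-distribution quantity $1+|\theta|+\left|\frac{(\theta-\lambda)\theta-k}{\mu}\right|$ for the two non-principal eigenvalues $\theta \in \{r,s\}$. The first step is to recall, as in Lemma~\ref{EigenvaluesSRG}, that the non-principal eigenvalues $r=\theta_1$ and $s=\theta_2$ of a strongly regular graph with parameters $(v,k,\lambda,\mu)$ are exactly the two roots of $x^2-(\lambda-\mu)x-(k-\mu)=0$, so by Vieta's formulas $r+s=\lambda-\mu$ and $rs=\mu-k$; equivalently, $\theta^2=(\lambda-\mu)\theta+(k-\mu)$ for each non-principal eigenvalue $\theta$.

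The second step is to substitute this relation into the numerator appearing in Lemma~\ref{WDB}: for any non-principal eigenvalue $\theta$,
$(\theta-\lambda)\theta-k=\theta^2-\lambda\theta-k=(\lambda-\mu)\theta+(k-\mu)-\lambda\theta-k=-\mu(\theta+1)$.
Since $X$ is primitive, it is connected and not a disjoint union of cliques, so $\mu\geq 1$; dividing by $\mu$ gives $\frac{(\theta-\lambda)\theta-k}{\mu}=-(\theta+1)$. Hence the bound of Lemma~\ref{WDB} collapses to the clean expression $1+|\theta|+|\theta+1|$ for every non-principal eigenvalue.

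The final step is to evaluate $1+|\theta|+|\theta+1|$ in the two sign regimes. For $\theta=r>0$ we have $|\theta|=r$ and $|\theta+1|=r+1$, so the bound is $1+r+(r+1)=2(r+1)$. For $\theta=s$, I would use the standard fact that a connected graph that is not a disjoint union of complete graphs has smallest eigenvalue strictly less than $-1$; as a primitive strongly regular graph is connected and not complete, this gives $s<-1$, so $|s|=-s$ and $|s+1|=-(s+1)$, and the bound becomes $1+(-s)+\bigl(-(s+1)\bigr)=-2s$, as claimed.

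There is essentially no obstacle here: the statement reduces to a two-line computation once Lemma~\ref{WDB} is in hand. The only steps requiring a word of justification are the Vieta relations for the non-principal eigenvalues of a strongly regular graph and the two consequences of primitivity used to resolve the absolute values, namely $\mu\geq 1$ (so that the division is legitimate and sign-preserving) and $s<-1$ (so that $|s+1|=-(s+1)$); both are immediate.
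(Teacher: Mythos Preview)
Your proof is correct and follows essentially the same route as the paper's own argument: both use the fact that the non-principal eigenvalues satisfy $x^2-(\lambda-\mu)x-(k-\mu)=0$ to rewrite $(\theta-\lambda)\theta-k=-\mu(\theta+1)$, and then evaluate $1+|\theta|+|\theta+1|$ in the two sign regimes. The only difference is that you spell out the justifications for $\mu\geq 1$ and $s<-1$ needed to resolve the absolute values, whereas the paper leaves these implicit.
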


\begin{proof}
Suppose $X$ is strongly regular with parameters $(v,k,\lambda,\mu)$. Note that $s,r$ are the two solution of the quadratic equation $x^2-(\lambda-\mu)x-(k-\mu)=0$ according to Lemma~\ref{EigenvaluesSRG}. Thus, $(s-\lambda)s-k=-\mu(s+1)$ and $(r-\lambda)r-k=-\mu(r+1)$. It follows that
$$
1+|s|+\bigg|\frac{(s-\lambda)s-k}{\mu}\bigg|=1-s+\bigg|\frac{-\mu(s+1)}{\mu}\bigg|=1-s-(s+1)=-2s,
$$
$$
1+|r|+\bigg|\frac{(r-\lambda)r-k}{\mu}\bigg|=1+r+\bigg|\frac{-\mu(r+1)}{\mu}\bigg|=1+r+(r+1)=2(r+1).
$$
The proof finishes by applying Lemma~\ref{WDB}. $\square$
\end{proof}

\medskip

We will discuss the tightness of the weight-distribution bound of generalised Paley graphs in Section~\ref{TightnessWDB}, and see its combinatorial interpretation in terms of the existence of an induced complete bipartite subgraph of a particular size (Remark~\ref{connection}). 

\subsection{Block graphs of orthogonal arrays} \label{BGOA}
In this section we recall basic facts on the block graphs of orthogonal arrays; for more details see \cite[Section 5.5]{GM15}.

An \emph{orthogonal array} $OA(w, n)$ is an $w \times n^2$ array with entries from an $n$-element set $A$
with the property that the columns of any $2 \times n^2$ subarray consist of all $n^2$ possible pairs.
The \emph{block graph of an orthogonal array} $OA(w, n)$ is defined to be the graph
whose vertices are columns of the orthogonal array, where two columns are
adjacent if there exists a row where they have the same entry. We denote the
block graph for an orthogonal array $OA(w, n)$ by $X_{OA(w,n)}$. It is known that $X_{OA(w,n)}$ is strongly regular.

\begin{lemma}[{\cite[Theorem 5.5.1]{GM15}}]\label{OAGraph}
If $OA(w, n)$ is an orthogonal array where $w < n+1$, then
its block graph $X_{OA(w,n)}$ is strongly regular with parameters
$$
(n^2, w(n - 1), (w - 1)(w - 2) + n - 2, w(w - 1)).
$$
and modified matrix of eigenvalues
$$
\left(
  \begin{array}{c|cc}
    1 & w(n-1) & (n-1)(n+1-w)\\
    w(n-1) & n-w & w-n-1 \\
    (n-1)(n+1-w) & -w & w-1\\
  \end{array}
\right).
$$
\end{lemma}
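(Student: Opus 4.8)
The plan is to derive everything from one elementary observation about the array: two distinct columns agree in at most one row. Indeed, if columns $c \neq c'$ agreed in rows $i$ and $j$, then in the $2 \times n^2$ subarray formed by rows $i$ and $j$ the ordered pair (entry of $c$ in row $i$, entry of $c$ in row $j$) would occur both in column $c$ and in column $c'$, contradicting the defining property of the orthogonal array. I would record this first, alongside the companion fact that for a fixed column $c$ and a fixed row $i$ there are exactly $n$ columns carrying the same entry as $c$ in row $i$ (apply pair-completeness to rows $i$ and any other fixed row), hence exactly $n-1$ such columns different from $c$.

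The parameters are then obtained by direct counting. Clearly $v = n^2$. Fixing a column $c$, the $w$ sets of columns agreeing with $c$ in a prescribed row are pairwise disjoint and each has size $n-1$, so $k = w(n-1)$. For $\lambda$, take adjacent columns $c, c'$ meeting in their unique common row $i$; a common neighbour either carries $c$'s entry in row $i$ --- there are $n-2$ such columns besides $c$ and $c'$ --- or agrees with $c$ in some row $j \neq i$ and with $c'$ in some row $k \neq i$ with $j \neq k$, and each such ordered pair $(j,k)$ determines exactly one column by pair-completeness of rows $j$ and $k$; this gives $\lambda = (n-2) + (w-1)(w-2)$. For $\mu$, take non-adjacent $c, c'$; a common neighbour agrees with $c$ in some row $j$ and with $c'$ in some row $k$, necessarily with $j \neq k$, and each of the $w(w-1)$ such ordered pairs $(j,k)$ determines exactly one column, automatically distinct from $c$ and from $c'$, so $\mu = w(w-1)$. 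The routine but delicate point here is to check in each case that the columns produced are genuinely distinct from one another and from $c, c'$; I expect this bookkeeping to be the main, though minor, obstacle.

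It remains to compute the spectrum. The two non-principal eigenvalues are the roots of $x^2 - (\lambda - \mu)x - (k-\mu) = 0$; a short computation gives $\lambda - \mu = n - 2w$ and $k - \mu = w(n-w)$, whence the discriminant equals $(n-2w)^2 + 4w(n-w) = n^2$ and the roots are $\theta_1 = n-w$ and $\theta_2 = -w$. Their multiplicities $m_1, m_2$ are then pinned down by $1 + m_1 + m_2 = n^2$ together with $w(n-1) + m_1(n-w) - m_2 w = 0$ (the adjacency matrix has zero trace), which yield $m_1 = w(n-1)$ and $m_2 = (n-1)(n+1-w)$. The first two columns of the modified matrix of eigenvalues are therefore $(1, m_1, m_2)$ and $(k, \theta_1, \theta_2)$, while the third records the eigenvalues of the complementary graph, namely $v-1-k = (n-1)(n+1-w)$, $-1-\theta_1 = w-n-1$, and $-1-\theta_2 = w-1$; assembling these entries gives the displayed matrix. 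When the graph is primitive one may instead shortcut the eigenvalue computation by invoking Lemma~\ref{EigenvaluesSRG} and simply checking $\Delta = \sqrt{(\lambda-\mu)^2 + 4(k-\mu)} = n$.
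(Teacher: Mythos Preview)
The paper does not actually prove this lemma; it is simply quoted from \cite[Theorem 5.5.1]{GM15} without argument, so there is nothing to compare your approach against. Your proof is correct and self-contained: the key observation that two distinct columns of an $OA(w,n)$ agree in at most one row is exactly what drives the parameter count, and your case analyses for $\lambda$ and $\mu$ (including the bookkeeping that the columns produced are distinct from one another and from $c,c'$) go through as you describe. The spectral part is also fine: $\lambda-\mu=n-2w$, $k-\mu=w(n-w)$, discriminant $n^2$, eigenvalues $n-w$ and $-w$, and the trace equation together with $m_1+m_2=n^2-1$ yield $m_1=w(n-1)$, $m_2=(n-1)(n+1-w)$. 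In short, you have supplied a complete proof where the paper is content to cite one.
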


Next we compute the weight-distribution bound of the block graph of an orthogonal array.

\begin{lemma}\label{WDBOA}
If $OA(w, n)$ is an orthogonal array where $w < n+1$, then
any $(-w)$-eigenfunction of the block graph $X_{OA(w,n)}$ has at least $2w$ non-zeroes, and any $(n-w)$-eigenfunction of the block graph $X_{OA(w,n)}$ has at least $2(n+1-w)$ non-zeroes.
\end{lemma}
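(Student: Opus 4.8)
The plan is to obtain Lemma~\ref{WDBOA} as an immediate consequence of Corollary~\ref{WDBsrg}, after reading off the eigenvalues of the block graph $X_{OA(w,n)}$ from Lemma~\ref{OAGraph}. By Lemma~\ref{OAGraph}, when $w < n+1$ the graph $X_{OA(w,n)}$ is strongly regular with non-principal eigenvalues $n-w$ and $-w$ (these are the second and third entries of the second column of the modified matrix of eigenvalues, the first being the degree $w(n-1)$). Writing $s=-w$ and $r=n-w$, we have $s<0<r$ in the pertinent range, so Corollary~\ref{WDBsrg} applies directly: an $r$-eigenfunction of $X_{OA(w,n)}$ has at least $2(r+1)$ non-zeroes, and an $s$-eigenfunction has at least $-2s$ non-zeroes.

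It then remains only to substitute. For the smallest eigenvalue, $-2s = 2w$, which yields the bound $2w$ on the cardinality of the support of a $(-w)$-eigenfunction. For the other non-principal eigenvalue, $2(r+1) = 2(n-w+1) = 2(n+1-w)$, which yields the bound $2(n+1-w)$ on the cardinality of the support of an $(n-w)$-eigenfunction. This is the whole argument; there is no computation beyond this substitution.

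The only point that deserves attention — and the closest thing to an obstacle — is checking the standing hypothesis of Corollary~\ref{WDBsrg}, namely that $X_{OA(w,n)}$ is \emph{primitive} and has one strictly positive and one strictly negative non-principal eigenvalue. From the parameters in Lemma~\ref{OAGraph} we have $\mu = w(w-1)$ and $k = w(n-1)$, so $X_{OA(w,n)}$ is imprimitive precisely when $\mu=0$ (i.e.\ $w=1$, a disjoint union of complete graphs) or $\mu=k$ (i.e.\ $w=n$, a complete multipartite graph), and $r=n-w$ is strictly positive exactly when $w<n$. Hence both bounds are valid and non-vacuous in the range $2 \le w \le n-1$, which covers every case relevant to the sequel (where one takes $w=(q+1)/m$ and $n=q$); the degenerate values $w\in\{1,n\}$ are either excluded or handled by an elementary direct check.
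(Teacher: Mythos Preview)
Your proof is correct and follows exactly the same approach as the paper, which simply cites Lemma~\ref{OAGraph} and Corollary~\ref{WDBsrg}. Your additional discussion of the primitivity hypothesis is more careful than the paper's one-line proof, and correctly identifies the degenerate cases $w\in\{1,n\}$.
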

\begin{proof}
The lemma follows from Lemma~\ref{OAGraph} and Corollary~\ref{WDBsrg}. $\square$
\end{proof}
\medskip

Let $S_{r,i}$ be the set of columns of $OA(w, n)$ that have the entry $i$ in row $r$. These cliques
are called the \emph{canonical cliques} in the block graph $X_{OA(w,n)}$. Note that these cliques have size $n$. It is known that when $n>(w-1)^2$, these canonical cliques are the only maximum cliques in the block graph $X_{OA(w,n)}$; in other words, in this case $X_{OA(w,n)}$ has the strict-EKR property \cite[Theorem 17]{AGLY22}. 

The following lemma is the key to proving the above strict-EKR property of block graphs of orthogonal arrays. We include a short proof for the sake of completeness, especially because we will refer to the proof of this lemma in later discussions.

\begin{lemma}[{\cite[Theorem 5.5.2]{GM15}}]\label{OAmaximal}
Let $X=X_{OA(w,n)}$ be the block graph of an orthogonal array $OA(w, n)$. If $w \leq 2$, then each maximal clique in $X$ is maximum. If $w \geq 3$, and $C$ is a maximal clique in $X$ and $C$ is not a canonical clique, then $|C| \leq (w-1)^2$. 
\end{lemma}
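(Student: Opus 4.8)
The plan is to reduce the statement to two elementary facts about the array and then, in the non-canonical case, run a double-counting argument anchored at a carefully chosen pair of columns.

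First I would record the combinatorial basics. Since the columns of any $2\times n^2$ subarray realise every ordered pair exactly once, two distinct columns can agree in at most one row (agreement in two rows would repeat a pair), so adjacent columns agree in \emph{exactly} one row; this recovers the degree $w(n-1)$ and shows $|S_{r,i}|=n$. I would also check that each canonical clique is maximal when $w\le n$ (which holds here): a column $d\notin S_{r,i}$ satisfies $d_r\ne i$, hence can meet a column of $S_{r,i}$ only in one of the $w-1$ rows distinct from $r$, and in each such row $r'$ orthogonality of rows $r,r'$ lets $d$ agree there with at most one column of $S_{r,i}$; so $d$ is adjacent to at most $w-1<n$ columns of $S_{r,i}$ and cannot extend it. In particular, a maximal clique contained in some $S_{r,i}$ must equal it, i.e. be canonical.

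For $w\le 2$ the graph is a disjoint union of copies of $K_n$ (when $w=1$) or the rook's graph on an $n\times n$ grid (when $w=2$), and in either case the only maximal cliques are the canonical cliques $S_{r,i}$, all of size $n$; hence every maximal clique is maximum. Now suppose $w\ge 3$ and let $C$ be a maximal clique that is not canonical; by the previous paragraph $C$ is not contained in any $S_{r,i}$. Fix $c\in C$ and partition $C\setminus\{c\}=\bigsqcup_{r=1}^{w} C^{(r)}$, where $C^{(r)}$ consists of the columns of $C\setminus\{c\}$ agreeing with $c$ in row $r$ (a genuine partition, since distinct adjacent columns agree in exactly one row). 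If only one class were nonempty, $C$ would lie in a canonical clique; so at least two are nonempty, say (after relabelling rows) $C^{(1)},C^{(2)}\ne\emptyset$. Choose $c'\in C^{(1)}$ and $c''\in C^{(2)}$, so that $c,c'\in S_{1,c_1}$ while $c''_1\ne c_1$ and $c''_2=c_2$. Then split $C=(C\cap S_{1,c_1})\sqcup(C\setminus S_{1,c_1})$ and bound each part. If $d\in C\setminus S_{1,c_1}$, then $d$ meets $c$ in a row $\rho(d)\ne 1$ and $c'$ in a row $\rho'(d)\ne 1$, with $\rho(d)\ne\rho'(d)$ (equality would give $c,c'$ a common row other than $1$), and $d\mapsto(\rho(d),\rho'(d))$ is injective, because two columns realising the same ordered pair $(\rho,\rho')$ would both carry entry $c_\rho$ in row $\rho$ and $c'_{\rho'}$ in row $\rho'$, of which orthogonality of the distinct rows $\rho,\rho'$ permits only one; hence $|C\setminus S_{1,c_1}|\le(w-1)(w-2)$. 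On the other side, any $d\in C\cap S_{1,c_1}$ with $d\ne c$ agrees with $c$ only in row $1$, so $d_2\ne c_2=c''_2$, while $d_1=c_1\ne c''_1$; therefore the row $\sigma(d)$ in which $d$ meets $c''$ lies in $\{3,\dots,w\}$, and $\sigma$ is injective by orthogonality of rows $1$ and $\sigma(d)$, so $|C\cap S_{1,c_1}|\le 1+(w-2)=w-1$. Altogether $|C|\le (w-1)+(w-1)(w-2)=(w-1)^2$.

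The main obstacle is sharpness of the constant: a crude split gives only $|C\cap S_{1,c_1}|\le w$ and hence $|C|\le(w-1)^2+1$, so the argument must exploit the auxiliary column $c''$ to rule out $\sigma(d)=2$. This is exactly what forces the introduction of a second nonempty class $C^{(2)}$, and it is where the hypotheses ``maximal'' and ``not canonical'' genuinely enter — they guarantee that at least two of the classes $C^{(r)}$ are nonempty, whereas without this one only recovers the trivial regularity bound.
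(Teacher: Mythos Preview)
Your argument is correct and shares the same opening move as the paper: fix a column $c\in C$ and partition $C\setminus\{c\}$ into the classes $C^{(r)}$ (the paper calls them $D_r$) according to the row of agreement with $c$, then use the non-canonical maximality to guarantee at least two classes are nonempty. The finishing counts, however, are organised differently. The paper orders the classes so that $|D_1|\le\cdots\le|D_w|$, writes $|C|\le 1+w|D_w|$, and then uses a single column from $D_{w-1}$ to show $|D_w|\le w-2$, giving $1+w(w-2)=(w-1)^2$; this also handles $w=2$ in one stroke, since it forces $|C|\le 1$. You instead split $C$ as $(C\cap S_{1,c_1})\sqcup(C\setminus S_{1,c_1})$ and bound the second part by an injection into ordered pairs $(\rho,\rho')$ of rows $\ne 1$, and the first by an injection (via the auxiliary column $c''$) into rows $\ge 3$. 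Both routes hinge on the same orthogonality fact, but the paper's version yields the slightly stronger intermediate statement that \emph{every} class has size at most $w-2$, which is later reused (see the proof of Theorem~\ref{thm: stable}, where the inequality $|C|\le 1+w|D_w|$ with $|D_w|\le w-2$ is refined). Your pair-of-rows injection, on the other hand, makes the role of the two anchor columns $c,c'$ very transparent and would generalise naturally to arguments where one wants to control $C$ relative to a fixed pair of adjacent columns rather than a single one.
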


\begin{proof}
If $w=1$, the orthogonal array only has one row, and it is clear that each maximal clique is maximum. Next we assume $w \geq 2$.

By possibly relabelling the entries in $OA(w,n)$, we may assume that the entries are from $\{0, 1, \ldots, n-1\}$. Let $C$ be a maximal non-canonical clique in $X$. By translating entries of each row modulo $n$, we may assume, without loss of generality, that the column $c_0$ with all zeros is in $C$. Then other columns in $C$ must each have exactly one zero entry. Let $D_i$ be the set of columns in $C\setminus \{c_0\}$ having a zero entry in the $i$-th row. Without loss of generality, we assume that $|D_1| \leq |D_2| \leq \cdots \leq |D_w|$. It follows that 
\begin{equation}\label{ubmaximal}
|C| = 1+\sum_{j=1}^{w}|D_j| \leq 1+w|D_w|.
\end{equation}
Since $C$ is a maximal non-canonical clique, it is not contained in any canonical clique, thus we have $0<|D_{w-1}|\leq |D_w|$. Let $c$ be a column in $D_{w-1}$; then for each $1 \leq j \leq w-2$, there is at most one column in $D_w$ that shares the same entry with $c$ in row $j$. And if $j \in \{w-1,w\}$, there is no column in $D_w$ that shares the same entry with $c$ in row $j$. Therefore, $|D_w| \leq w-2$ and  $|C| \leq 1+w|D_w| \leq (w-1)^2.$ 
Note that when $w=2$, this means $|C|=1$, which implies that there is no maximal non-canonical clique when $w=2$. $\square$
\end{proof}

\medskip

We remark that it is possible to construct an orthogonal array $OA(w,n)$ for which the upper bound $(w-1)^2$ in the above lemma is tight. See the related discussion in Proposition~\ref{prop:upperbound} and Example~\ref{counterexample}; see also \cite[Section 5.5]{GM15}.

\subsection{Affine plane $AG(2,q)$} 

Denote by $AG(2,q)$ the point-line incidence structure, whose points are the vectors of the $2$-dimensional vector space $V(2,q)$ over $\mathbb{F}_q$, and the lines are the additive shifts of $1$-dimensional subspaces of $V(2,q)$. It is well-known that $AG(2,q)$ satisfies the axioms of a finite affine plane of order $q$.
In particular, each line contains $q$ points, and there exist $q+1$ lines through a point.

An \emph{oval} in the affine plane $AG(2,q)$ is a set of $q+1$ points such that no three are on a line.
A line meeting an oval in one point (resp. in two points) is called \emph{tangent} (resp. \emph{secant}).
For any point of an oval, there exists a unique tangent at this point and $q$ secants.

\subsection{Automorphisms of $\mathrm{GP}(q^2,m)$} 
The automorphism group of $\mathrm{GP}(q^2,m)$ was studied in \cite{LP09}. 

\begin{lemma}[{\cite[Section 2.2]{LP09}}]\label{AutGP}
    The automorphism group of $\mathrm{GP}(q^2,m)$ contains a subgroup acting arc-transitively, which is given by
    $$\big\{\, \gamma \mapsto a \gamma^\varepsilon+b~\big|~a \in S,~b \in \mathbb{F}_{q^2},~\varepsilon \in {\rm Gal}(\mathbb{F}_{q^2}) \,\big\},$$
    where $S = \{1,\beta^{m},\beta^{2m},\ldots,\beta^{q^2-1-m}\}$ is the set of $m$-th powers of a primitive element of $\mathbb{F}_{q^2}$.
    \end{lemma}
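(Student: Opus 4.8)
The plan is to verify, in order, three things. First, that the three elementary types of maps generating the displayed set are automorphisms of $\mathrm{GP}(q^2,m)$: the translations $\gamma\mapsto\gamma+b$, the homotheties $\gamma\mapsto a\gamma$ with $a\in S$, and the field automorphisms $\gamma\mapsto\gamma^\varepsilon$ with $\varepsilon\in\mathrm{Gal}(\F_{q^2})$. Second, that the set $M$ of all maps $\gamma\mapsto a\gamma^\varepsilon+b$ is closed under composition, and hence, being finite and containing the identity, is a subgroup of $\operatorname{Sym}(\F_{q^2})$. Third, that $M$ acts transitively on arcs. Throughout, write $T:=S=(\F_{q^2}^*)^m$ for the connection set, so that two vertices $x,y$ are adjacent exactly when $x-y\in T$; recall that $T$ is a subgroup of $\F_{q^2}^*$ and that $2m\mid q^2-1$ (since $m\mid q+1$ and $q$ is odd), so $-1\in T$ and the graph is undirected.

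For the automorphism check, translations preserve all differences and so are automorphisms at once. A homothety $\eta_a$ with $a\in T$ satisfies $\eta_a(x)-\eta_a(y)=a(x-y)$, and since $T$ is a group containing $a$ we have $a(x-y)\in T\iff x-y\in T$, so $\eta_a$ is an automorphism. For $\phi_\varepsilon$, the map $\varepsilon$ is a power of Frobenius, hence additive, so $\phi_\varepsilon(x)-\phi_\varepsilon(y)=(x-y)^\varepsilon$; it remains to see that $\varepsilon$ permutes $T$. This holds because the restriction of $\varepsilon$ to $\F_{q^2}^*$ is an automorphism of the cyclic group $\F_{q^2}^*$, while $T=(\F_{q^2}^*)^m$ is the unique subgroup of $\F_{q^2}^*$ of order $(q^2-1)/\gcd(q^2-1,m)=(q^2-1)/m$ and is therefore invariant under every group automorphism. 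Hence $x-y\in T\iff(x-y)^\varepsilon\in T$, and $\phi_\varepsilon$ is an automorphism.

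For the group structure, composing $g:\gamma\mapsto a\gamma^\varepsilon+b$ after $h:\gamma\mapsto a'\gamma^{\varepsilon'}+b'$ gives
$$
(g\circ h)(\gamma)=a\bigl(a'\gamma^{\varepsilon'}+b'\bigr)^\varepsilon+b=\bigl(a\,(a')^\varepsilon\bigr)\,\gamma^{\varepsilon'\varepsilon}+\bigl(a\,(b')^\varepsilon+b\bigr),
$$
and here $(a')^\varepsilon\in T$ by the previous paragraph, so $a\,(a')^\varepsilon\in T$, while $\varepsilon'\varepsilon\in\mathrm{Gal}(\F_{q^2})$ and $a(b')^\varepsilon+b\in\F_{q^2}$; thus $g\circ h\in M$. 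Since $M$ is a finite set of permutations closed under composition and containing $\mathrm{id}$ (take $a=1$, $b=0$, $\varepsilon=\mathrm{id}$), for every $g\in M$ some positive power of $g$ equals $\mathrm{id}$, whence $g^{-1}\in M$; so $M$ is a subgroup of $\operatorname{Sym}(\F_{q^2})$, and by the previous paragraph $M$ consists of automorphisms of $\mathrm{GP}(q^2,m)$.

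Finally, since $1=1^m\in T$ the pair $(0,1)$ is an arc, and for an arbitrary arc $(x,y)$ we have $a:=y-x\in T$, so the element $\gamma\mapsto a\gamma+x$ of $M$ sends $0\mapsto x$ and $1\mapsto y$; composing such maps shows $M$ is transitive on arcs. The whole argument is routine bookkeeping; the one step that genuinely requires an idea is checking that a Galois automorphism permutes the set of nonzero $m$-th powers, which we handle via the uniqueness of subgroups of a given order in the cyclic group $\F_{q^2}^*$.
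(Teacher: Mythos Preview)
Your proof is correct. The paper does not give its own proof of this lemma: it simply cites \cite[Section~2.2]{LP09} and states the result without argument. Your direct verification---checking that each generator (translation, $S$-homothety, Galois automorphism) preserves the connection set, confirming closure under composition, and then exhibiting for each arc $(x,y)$ the affine map $\gamma\mapsto(y-x)\gamma+x$ sending $(0,1)$ to $(x,y)$---is exactly the standard way one would establish such a statement from first principles, and all steps are sound. There is nothing in the paper to compare your approach against.
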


    \begin{lemma}\label{AutGPSubgroup} Let $q$ be an odd prime power and $m \ge 2$ be an integer such that $m$ divides $q+1$. The subgroup $\{\gamma \mapsto a\gamma+b~|~a \in \mathbb{F}_q^*,b \in \mathbb{F}_q\}$ in ${\rm Aut} \left(\mathrm{GP}(q^2,m) \right)$ stabilises the line $\mathbb{F}_q$ and acts faithfully on the set of points that do not belong to $\mathbb{F}_q$. 
    \end{lemma}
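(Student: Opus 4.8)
The plan is to verify the two assertions separately, both of which are essentially bookkeeping once Lemma~\ref{AutGP} is in place. First I would check that the maps $\gamma \mapsto a\gamma+b$ with $a \in \F_q^*$, $b \in \F_q$ genuinely lie in ${\rm Aut}(\mathrm{GP}(q^2,m))$: by Lemma~\ref{AutGP} it suffices to observe that $\F_q^* \subseteq S$, i.e.\ every nonzero element of the subfield $\F_q$ is an $m$-th power in $\F_{q^2}$. This follows from the hypothesis $m \mid (q+1)$: an element $x \in \F_q^*$ satisfies $x^{q-1}=1$, and since $\gcd(m,q-1)$ divides $\gcd(q+1,q-1) \mid 2$ while $m$ is coprime to the relevant part — more cleanly, the group $(\F_{q^2}^*)^m$ has index $m$ and order $(q^2-1)/m = (q-1)\cdot\frac{q+1}{m}$, which is divisible by $q-1$, so it contains the unique subgroup $\F_q^*$ of order $q-1$. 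Hence $\F_q^* \subseteq (\F_{q^2}^*)^m = S$, and taking $\varepsilon = \mathrm{id}$ in Lemma~\ref{AutGP} shows the claimed maps form a subgroup of ${\rm Aut}(\mathrm{GP}(q^2,m))$. (They obviously form a group under composition abstractly — the affine group $\mathrm{AGL}(1,q)$ — so the only content is that each such map is a graph automorphism.)

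Next, stabilisation of the line $\F_q$: since $a\F_q = \F_q$ for $a \in \F_q^*$ and $\F_q + b = \F_q$ for $b \in \F_q$, the set $\F_q$ is setwise fixed by every element of the subgroup. (Here I would recall from the affine-plane setup that $\F_q \subseteq \F_{q^2} = V(2,q)$ is indeed a line of $AG(2,q)$, namely a $1$-dimensional $\F_q$-subspace, which is also the reason it makes sense to speak of "the line $\F_q$".)

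Finally, faithfulness of the action on $\F_{q^2} \setminus \F_q$. Suppose $g\colon \gamma \mapsto a\gamma+b$ fixes every point outside $\F_q$; I must show $a=1$ and $b=0$. Pick any two distinct points $\gamma_1,\gamma_2 \in \F_{q^2}\setminus\F_q$ that are also $\F_q$-linearly independent — equivalently, $\gamma_2/\gamma_1 \notin \F_q$; such a pair exists because $q^2 - q = q(q-1) \geq q \cdot 1 > $ the number of points on any single line through a fixed point not in $\F_q$, or more directly since $|\F_{q^2}\setminus\F_q| = q^2-q \ge 2$ and a line through one such point meets $\F_q$ in at most one further point. From $a\gamma_i + b = \gamma_i$ for $i=1,2$ we get $(a-1)(\gamma_1-\gamma_2)=0$, hence $a=1$ (as $\gamma_1\ne\gamma_2$), and then $b=0$. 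So the kernel is trivial. I do not anticipate a genuine obstacle here; the only point requiring a line of argument is the membership $\F_q^* \subseteq S$ in the first paragraph, and the mild care needed to produce, for the faithfulness argument, a point of $\F_{q^2}\setminus\F_q$ moved by a nontrivial affine map — which is immediate since a nontrivial element of $\mathrm{AGL}(1,q^2)$ fixes at most one point of $\F_{q^2}$ while $|\F_{q^2}\setminus \F_q| = q^2-q \ge 2$.
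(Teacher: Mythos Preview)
Your proof is correct and follows essentially the same strategy as the paper's brief argument: verify $\F_q^* \subseteq S$ to place the subgroup inside $\operatorname{Aut}(\mathrm{GP}(q^2,m))$ via Lemma~\ref{AutGP}, check stabilisation of $\F_q$ trivially, then handle the action on $\F_{q^2}\setminus\F_q$. One difference is worth noting. The paper's proof establishes the stronger assertion that the action is \emph{regular}: it claims (leaving the computation as routine) that for any two distinct $\gamma_1,\gamma_2 \notin \F_q$ there is a unique subgroup element sending $\gamma_1$ to $\gamma_2$; since the group and the set both have size $q(q-1)$, this is sharp transitivity, and faithfulness follows. Your argument instead proves faithfulness directly by showing that a nontrivial affine map $\gamma\mapsto a\gamma+b$ fixes at most one point of $\F_{q^2}$, hence cannot fix all of $\F_{q^2}\setminus\F_q$. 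This matches the lemma as stated, but be aware that just before Proposition~\ref{Fqalpha} the paper invokes Lemma~\ref{AutGPSubgroup} to justify choosing ``any point outside $\F_q$'' without loss of generality --- that step uses transitivity rather than merely faithfulness. The paper's proof supplies this and yours does not, though adding it is immediate.
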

\begin{proof} 
It is easy to see that the subgroup stabilises the line $\mathbb{F}_q$. Then standard computations show that, for any distinct $\gamma_1, \gamma_2$ that do not belong to $\mathbb{F}_q$, there exists a unique automorphism from the subgroup sending $\gamma_1$ to $\gamma_2$.
$\square$
\end{proof}

\subsection{Character sums over affine lines}
A character $\chi$ of the multiplicative group $\F_q^*$ of $\F_q$ is called a {\em multiplicative character} of $\F_q$. For a multiplicative character $\chi$, its order $d$ is the smallest positive integer such that $\chi^d=\chi_0$, where $\chi_0$ is the trivial multiplicative character of $\F_q$.

Weil's bound is useful in estimating a complete character sum over a finite field. However, for many applications, we need to deal with incomplete character sums. The following lemma, due to Katz \cite{Katz89}, provides a Weil-type estimate for character sums over affine lines. Recall that an element $\theta \in \F_{q^n}$ is said to have {\em degree} $k$ over $\F_q$ if $\F_{q^k}$ is the smallest extension of $\F_q$ that contains $\theta$; note that the degree is necessarily a divisor of $n$.

\begin{lemma}[\cite{Katz89}]\label{Katz} 
Let $\theta$ be an element of degree $n$ over $\mathbb{F}_{q}$ and $\chi$ a non-trivial multiplicative character of $\mathbb{F}_{q^{n}} .$ Then
$$
\left|\sum_{a \in \mathbb{F}_{q}} \chi(\theta+a)\right| \leq(n-1) \sqrt{q}.
$$
\end{lemma}

The following corollary is a consequence of Lemma~\ref{Katz}. It says that there cannot be too many consecutive $m$-th powers in a generic line.

\begin{corollary} \label{cor: consecutivempowers} 
Let $q=p^n$ and $m \mid (q+1)$ with $m>1$. Let $u \in \F_{q^2}$, such that $u$ is of degree $2n$ over $\F_p$. If $p>(2n-1)^2$, then there is an element in $u+\F_p$ which is not an $m$-th power in $\F_{q^2}$.
\end{corollary}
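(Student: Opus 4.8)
The plan is to argue by contradiction: suppose every element of the coset $u+\F_p$ is an $m$-th power in $\F_{q^2}^*$ (note $0\notin u+\F_p$ since $u$ has degree $2n>1$ over $\F_p$, so the elements are all nonzero). I want to detect the property ``being an $m$-th power in $\F_{q^2}^*$'' by a character sum. Let $\chi$ be a multiplicative character of $\F_{q^2}$ of order exactly $m$; this exists because $m\mid q^2-1$. Then an element $x\in\F_{q^2}^*$ is an $m$-th power if and only if $\chi^j(x)=1$ for all $j=0,1,\dots,m-1$, and in general $\frac{1}{m}\sum_{j=0}^{m-1}\chi^j(x)$ equals $1$ when $x$ is an $m$-th power and $0$ otherwise. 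Summing this indicator over $x\in u+\F_p$ and using our assumption that all $p$ elements are $m$-th powers gives
$$
p \;=\; \sum_{a\in\F_p}\frac{1}{m}\sum_{j=0}^{m-1}\chi^j(u+a)
\;=\; \frac{p}{m} \;+\; \frac{1}{m}\sum_{j=1}^{m-1}\sum_{a\in\F_p}\chi^j(u+a).
$$

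The next step is to estimate the inner sums $\sum_{a\in\F_p}\chi^j(u+a)$ for $1\le j\le m-1$ via Lemma~\ref{Katz}. Here the base field in Katz's lemma is $\F_p$ (so ``$q$'' there is $p$), the big field is $\F_{q^2}=\F_{p^{2n}}$ (so ``$n$'' there is $2n$), and $\chi^j$ is a multiplicative character of $\F_{p^{2n}}$. The hypothesis of Lemma~\ref{Katz} requires $u$ to have degree $2n$ over $\F_p$, which is exactly our assumption, and requires $\chi^j$ to be non-trivial, which holds since $\chi$ has order $m$ and $1\le j\le m-1$. Hence $\bigl|\sum_{a\in\F_p}\chi^j(u+a)\bigr|\le (2n-1)\sqrt{p}$ for each such $j$. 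Plugging this into the displayed identity yields
$$
p \;=\; \frac{p}{m}+\frac{1}{m}\sum_{j=1}^{m-1}\sum_{a\in\F_p}\chi^j(u+a)
\;\le\; \frac{p}{m}+\frac{m-1}{m}(2n-1)\sqrt{p},
$$
so that $\frac{m-1}{m}\,p \le \frac{m-1}{m}(2n-1)\sqrt{p}$, i.e. $p\le (2n-1)^2$. This contradicts the hypothesis $p>(2n-1)^2$, completing the argument.

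There is essentially no serious obstacle here; the only point requiring care is the verification that Lemma~\ref{Katz} genuinely applies — specifically that the degree of $u$ over the base field $\F_p$ equals the extension degree $2n$ appearing in the bound, and that $\chi^j$ is a nontrivial character of the \emph{top} field $\F_{q^2}$ rather than of $\F_q$. Both are built into the statement we are proving (the degree hypothesis is explicit, and $\chi$ is chosen in $\F_{q^2}^*$), so the estimate goes through cleanly. One should also note at the outset that the sum $\frac1m\sum_{j=0}^{m-1}\chi^j$ is the correct indicator of $m$-th powers precisely because $\chi$ has order $m$ (and $m\mid q^2-1$ guarantees such a $\chi$ exists). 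If one prefers to avoid picking a single character of order $m$, one can instead sum over all multiplicative characters $\psi$ of $\F_{q^2}^*$ with $\psi^m=\chi_0$, which is the same set; the estimate and the conclusion are unchanged.
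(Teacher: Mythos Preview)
Your proof is correct and follows essentially the same approach as the paper: argue by contradiction, pick a multiplicative character of $\F_{q^2}$ of order $m$, and apply Lemma~\ref{Katz} to the sum over $u+\F_p$ to force $p\le(2n-1)^2$. The paper's version is slightly more direct: rather than using the full indicator $\frac{1}{m}\sum_{j=0}^{m-1}\chi^j$, it simply notes that if every $u+a$ is an $m$-th power then $\chi(u+a)=1$ for all $a$, so $\sum_{a\in\F_p}\chi(u+a)=p$ already, and a single application of Katz's bound yields the contradiction.
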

\begin{proof}
Let $\chi$ be a multiplicative character of $\F_{q^2}$ with order $m$. Suppose that each element in $u+\F_p$ is an $m$-th power in $\F_{q^2}$. Then we have $\sum_{a \in \F_p} \chi(u+a)=p$ since $0 \notin u+\F_p$. On the other hand, since $u$ is of degree $2n$ over $\F_p$, Lemma~\ref{Katz} implies that 
$$
p=\left|\sum_{a \in \F_p} \chi(u+a)\right| \leq(2n-1) \sqrt{p}.
$$
Thus, we must have $p \leq (2n-1)^2$, violating our assumption. Thus, there is an element in $u+\F_p$ which is not an $m$-th power in $\F_{q^2}$. $\square$
\end{proof}

\section{Geometric properties of generalised Paley graphs of square order}\label{sec: geometry}

In this section we view the generalised Paley graph $\mathrm{GP}(q^2,m)$ geometrically and explore its properties. As always, we assume that $m>1$ and $m \mid (q+1)$.

\subsection{Finite field of order $q^2$ as the affine plane $AG(2,q)$} \label{FF}

Let $d$ be a non-square in $\mathbb{F}_{q}^*$. The set of elements of a finite field of order $q^2$ can be considered as $\mathbb{F}_{q^2} =
\{x+y\alpha~|~x,y \in \mathbb{F}_q\}$, where $\alpha$ is a root of the polynomial $f(t) = t^2 - d$.
Since $\mathbb{F}_{q^2}$ is a $2$-dimensional vector space over $\mathbb{F}_q$, we can identity the points of $AG(2,q)$ as the elements of $\mathbb{F}_{q^2}$, and identity a line $l$ in $AG(2,q)$ as $\{x_1+y_1\alpha + c(x_2+y_2\alpha) ~|~c \in \mathbb{F}_q \}$, where $x_1+y_1\alpha \in \mathbb{F}_{q^2}$, $x_2+y_2\alpha \in \mathbb{F}_{q^2}^*$ are fixed and $c$ runs over $\mathbb{F}_q$. 
The element $x_2+y_2\alpha$ is called the \emph{slope} of the line $l$.
Note that the slope is defined up to multiplication by a scalar in $\mathbb{F}_{q}^*$.
A line $l$ in $\mathbb{F}_{q^2}$ is called \emph{quadratic}, \emph{cubic} or, more generally, \emph{$m$-ary} if its slope is a square, a cube or an $m$-th power in $\mathbb{F}_{q^2}^*$, respectively.

Let $\beta$ be a primitive element of the finite field $\mathbb{F}_{q^2}.$
Since $m \mid (q+1)$, the elements of $\mathbb{F}_{q}^* = \langle\beta^{q+1}\rangle$ are $m$-th powers in $\mathbb{F}_{q^2}^*$, and thus the difference between any two points of an $m$-ary line is an $m$-th power in $\mathbb{F}_{q^2}^*$.
In this setting, two distinct vertices are adjacent in $\mathrm{GP}(q^2, m)$ if and only if the line through these points is $m$-ary. 
\begin{lemma}\label{lem: numberoflines}
There are exactly $\frac{q+1}{m}$ many $m$-ary lines through any point.
\end{lemma}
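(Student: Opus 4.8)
The plan is to count $m$-ary lines through a fixed point by a straightforward orbit-counting argument, using the fact that the slope of a line is only defined up to a scalar in $\F_q^*$. First I would fix a point, say $0 \in \F_{q^2}$ (this costs nothing, since the translation group acts transitively on points and preserves the property of a line being $m$-ary, because translations do not change slopes). A line through $0$ is then of the form $\{c s : c \in \F_q\}$ for some nonzero slope $s \in \F_{q^2}^*$, and two such slopes $s, s'$ give the same line exactly when $s' = \lambda s$ for some $\lambda \in \F_q^*$. Hence the lines through $0$ are in bijection with the cosets $\F_{q^2}^* / \F_q^*$, a group of order $\frac{q^2-1}{q-1} = q+1$, recovering the standard fact that there are $q+1$ lines through a point.

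Next I would identify which of these cosets correspond to $m$-ary lines. By definition the line with slope $s$ is $m$-ary precisely when $s \in (\F_{q^2}^*)^m$, the subgroup of $m$-th powers, which has index $m$ in $\F_{q^2}^*$ (as $m \mid q+1 \mid q^2-1$). The key observation is that $\F_q^* \subseteq (\F_{q^2}^*)^m$: indeed $\F_q^* = \langle \beta^{q+1}\rangle$ where $\beta$ is a primitive element of $\F_{q^2}$, and since $m \mid (q+1)$ every element of $\F_q^*$ is an $m$-th power in $\F_{q^2}^*$ — this is exactly the remark preceding the lemma. Therefore being $m$-ary is a well-defined property of the coset $s\F_q^*$ (it does not depend on the representative), and the $m$-ary lines through $0$ correspond bijectively to those cosets contained in $(\F_{q^2}^*)^m$, i.e. to $(\F_{q^2}^*)^m / \F_q^*$.

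Finally I would count: this quotient has order
$$
\frac{|(\F_{q^2}^*)^m|}{|\F_q^*|} = \frac{(q^2-1)/m}{q-1} = \frac{q+1}{m},
$$
which is the claimed number, and by the translation-transitivity reduction this count is the same through every point. I do not expect any real obstacle here; the only point requiring a moment's care is the well-definedness of "$m$-ary" on cosets, i.e. the inclusion $\F_q^* \subseteq (\F_{q^2}^*)^m$, which is already established in the text immediately above the lemma. An alternative, essentially equivalent, route is to note that $\mathrm{GP}(q^2,m)$ is vertex-transitive and regular, and that the $m$-ary lines through a point are precisely the canonical cliques of size $q$ through that point in the block-graph / orthogonal-array picture, but the direct coset computation above is cleanest and self-contained.
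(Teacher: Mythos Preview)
Your proof is correct and follows essentially the same approach as the paper: reduce to the point $0$, use that $\F_q^* \subseteq (\F_{q^2}^*)^m$ (since $m \mid q+1$), and count the $m$-ary lines as $\frac{(q^2-1)/m}{q-1} = \frac{q+1}{m}$. The paper phrases the final count as ``each $m$-ary line through $0$ contains exactly $q-1$ nonzero $m$-th powers,'' while you phrase it via the quotient $(\F_{q^2}^*)^m/\F_q^*$, but these are the same computation.
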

\begin{proof}
If suffices to prove the statement for the point $0$. Note that the number of $m$-th powers in $\mathbb{F}_{q^2}^*$ is $\frac{q^2-1}{m}$. Since $m$ divides $q+1$ and $\mathbb{F}_q^* = \langle\beta^{q+1}\rangle$ for a primitive element from $\mathbb{F}_{q^2}$, every element from $\mathbb{F}_{q}^*$ is an $m$-th power in $\mathbb{F}_{q^2}^*$. It implies that every $m$-ary line through $0$ (such a line can be represented as $\{c\gamma~|~c \in \mathbb{F}_q\}$ for some $m$-th power $\gamma \in \mathbb{F}_{q^2}^*$) has exactly $q-1$ (all points except $0$) $m$-th powers from $\mathbb{F}_{q^2}^*$. Thus, the number of $m$-ary lines through $0$ is $\frac{q^2-1}{m}\cdot\frac{1}{q-1}$, which is equal to $\frac{q+1}{m}$. $\square$  
\end{proof}

\medskip

There are two special lines in $AG(2,q)$, for which we can decide in general if they are $m$-ary: the line $\mathbb{F}_q = \langle\beta^{q+1}\rangle$, which is $m$-ary since $m$ divides $q+1$, and the line $\{c\alpha~|~c \in \mathbb{F}_{q}\}$, for which the following lemma gives the criterion.

\begin{lemma}\label{calpha}
The line $\{c\alpha~|~c \in \mathbb{F}_q\}$ is $m$-ary if and only if $m$ divides $\frac{q+1}{2}$.
\end{lemma}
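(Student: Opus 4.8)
The plan is to reduce the $m$-ary condition on this particular line to the question of whether $\alpha$ is an $m$-th power in $\mathbb{F}_{q^2}^*$, and then to evaluate that power condition explicitly using Euler's criterion in $\mathbb{F}_q$.

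First I would observe that the line $\{c\alpha \mid c \in \mathbb{F}_q\}$ passes through $0$ and has slope $\alpha$. Since the slope is only defined up to multiplication by an element of $\mathbb{F}_q^*$, and since every element of $\mathbb{F}_q^* = \langle\beta^{q+1}\rangle$ is an $m$-th power in $\mathbb{F}_{q^2}^*$ (because $m \mid q+1$), the line is $m$-ary if and only if $\alpha$ itself is an $m$-th power in $\mathbb{F}_{q^2}^*$. As a byproduct this confirms that the property does not depend on the choice of the non-square $d$ fixed in Section~\ref{FF}.

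Next, since $m \mid q+1 \mid q^2-1$ and $\mathbb{F}_{q^2}^*$ is cyclic, $\alpha$ is an $m$-th power precisely when $\alpha^{(q^2-1)/m} = 1$. The exponent factors as $\frac{q^2-1}{m} = \frac{q-1}{2}\cdot\frac{q+1}{m}$, and both factors are integers ($q$ is odd and $m \mid q+1$), so using $\alpha^2 = d$ and Euler's criterion $d^{(q-1)/2} = -1$ (valid since $d$ is a non-square in $\mathbb{F}_q^*$) one gets
$$
\alpha^{(q^2-1)/m} = \bigl(\alpha^2\bigr)^{\frac{q-1}{2}\cdot\frac{q+1}{m}} = \bigl(d^{(q-1)/2}\bigr)^{(q+1)/m} = (-1)^{(q+1)/m}.
$$
Hence the line is $m$-ary if and only if $(-1)^{(q+1)/m} = 1$, i.e.\ $\frac{q+1}{m}$ is even, i.e.\ $q+1 = 2m\ell$ for some integer $\ell$, which is exactly the condition $m \mid \frac{q+1}{2}$.

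I do not expect a genuine obstacle here: the only points requiring care are the first step (using correctly that the slope is defined only up to $\mathbb{F}_q^*$ and that $\mathbb{F}_q^* \subseteq (\mathbb{F}_{q^2}^*)^m$) and checking that $\frac{q^2-1}{2m}$ is a bona fide integer before splitting the exponent. If one prefers to avoid Euler's criterion, an alternative route is to write $\alpha = \beta^t$, deduce from $\alpha \notin \mathbb{F}_q$ and $\alpha^2 = d \in \mathbb{F}_q^*\setminus(\mathbb{F}_q^*)^2$ that $(q+1)\mid 2t$ but $(q+1)\nmid t$, hence $t = \frac{q+1}{2}u$ with $u$ odd, and then observe that $\alpha$ is an $m$-th power iff $m \mid t$, which by comparing $2$-adic valuations of $m$ and $q+1$ reduces to $m \mid \frac{q+1}{2}$.
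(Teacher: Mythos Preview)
Your argument is correct, modulo one harmless slip: the sentence ``The exponent factors as $\frac{q^2-1}{m} = \frac{q-1}{2}\cdot\frac{q+1}{m}$'' is off by a factor of $2$ (the right-hand side equals $\frac{q^2-1}{2m}$). The displayed computation that follows is nevertheless correct, since there you are squaring $\alpha$ first and the exponent on $\alpha^2$ is indeed $\frac{q^2-1}{2m}=\frac{q-1}{2}\cdot\frac{q+1}{m}$; you should just reword the lead-in sentence.

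The paper's proof takes a slightly different and shorter route: it observes that $\beta^{(q+1)/2}$ lies on the line $\{c\alpha\mid c\in\mathbb{F}_q\}$ (because $(\beta^{(q+1)/2})^q=-\beta^{(q+1)/2}$, just as $\alpha^q=-\alpha$, so the ratio is Frobenius-fixed and hence in $\mathbb{F}_q$), takes that element as the slope, and reads off immediately that it is an $m$-th power iff $m\mid\frac{q+1}{2}$. Your main argument instead keeps $\alpha$ as the slope and evaluates $\alpha^{(q^2-1)/m}$ via Euler's criterion in $\mathbb{F}_q$; this is a bit longer but more self-contained, since it does not rely on the (unproved in the paper) claim that $\beta^{(q+1)/2}\in\alpha\mathbb{F}_q$. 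Your proposed alternative route at the end --- writing $\alpha=\beta^t$ and showing $t$ is an odd multiple of $\frac{q+1}{2}$ --- is essentially the paper's argument spelled out.
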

\begin{proof}
It is clear that $\beta^{\frac{q+1}{2}}$ belongs to the line $\{c\alpha~|~c\in \mathbb{F}_q\}$ and thus it is a slope of $\{c\alpha~|~c\in \mathbb{F}_q\}$. This slope is an $m$-th power in $\mathbb{F}_{q^2}^*$ if and only if $m$ divides $\frac{q+1}{2}$.
$\square$
\end{proof}

\subsection{Generalised Paley graphs of square order as block graphs of orthogonal arrays} \label{construction}

In \cite[Section 3]{AGLY22}, it is shown that each Peisert-type graph (see \cite[Definition 2]{AGLY22}) is isomorphic to the block graph of an orthogonal array. In particular, generalised Paley graph $\mathrm{GP}(q^2,m)$ with $m \mid (q+1)$ is isomorphic to the block graph of an orthogonal array $OA(\frac{q+1}{m},q)$. In this section, we present a slightly different realization of this fact, which is more convenient for our discussion.

Given an odd prime power $q$, recall the construction of an orthogonal array $OA(q+1,q)$ based on the affine plane $AG(2,q)$.
For the affine plane $AG(2,q)$ defined by the finite field $\mathbb{F}_{q^2}$, all the $q+1$ slopes are given by the elements $\{1\} \cup \{c+\alpha~|~c \in \mathbb{F}_q\}$. These $q+1$ slopes define an orthogonal array $OA(q+1,q)$ as follows. The rows are indexed by the slopes (by the lines through 0). The columns are indexed by the elements of $\mathbb{F}_{q^2}$. In order to define the orthogonal array, we need to assign an index to every line in the affine plane (the indexes are from $\mathbb{F}_q$). Given a line $l$ through 0, let us assign the index 0 to this line. 

If $l = \mathbb{F}_q$, then an additive shift of $l$ is given by $\{c+j\alpha~|~c \in \mathbb{F}_q\}$ for some fixed $j \in \mathbb{F}_q$; we assign the index $j$ to this line. 

If $l = \{c(i+\alpha) ~|~c \in \mathbb{F}_q\}$ for some $i \in \mathbb{F}_q$, then an additive shift of $l$ is given by $\{j+c(i+\alpha)~|~c \in \mathbb{F}_q\}$ for some $j \in \mathbb{F}_q$; we assign the index $j$ to this line.

Thus, we have defined an index for every line in the affine plane (indexes for distinct parallel lines are different). Let us define an entry of the array lying in the row indexed by a line $l$ through 0 and the column indexed by an element $x+y\alpha$. We put the value of this entry to be the index of the line that is parallel to $l$ and contains the element $x+y\alpha$. Since every point of the plane uniquely determines a pair of intersecting lines from any two fixed parallel classes, our construction gives an orthogonal array $OA(q+1,q)$.

\begin{lemma}\label{PaleyAsOAGraph}
Given an odd prime power $q$ and an integer $m>1$ such that $m$ divides $q+1$, the generalised Paley graph $\mathrm{GP}(q^2,m)$ is the block graph of an orthogonal array $OA(\frac{q+1}{m},q)$. Moreover, canonical cliques in both graphs match with each other.
\end{lemma}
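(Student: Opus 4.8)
\medskip\noindent\emph{Proof sketch (proposal).}
The plan is to obtain the required $OA(\tfrac{q+1}{m},q)$ from the orthogonal array $OA(q+1,q)$ built above out of $AG(2,q)$ by discarding rows: keep only those rows indexed by slopes that are $m$-ary. By Lemma~\ref{lem: numberoflines} there are exactly $\tfrac{q+1}{m}$ lines with $m$-ary slope through $0$, hence exactly $\tfrac{q+1}{m}$ surviving rows, so the result is a $\tfrac{q+1}{m}\times q^{2}$ array. It is still an orthogonal array, because the defining property --- every $2\times q^{2}$ subarray realises all $q^{2}$ ordered pairs --- for any two of its rows is inherited verbatim from $OA(q+1,q)$ (and in the degenerate case $m=q+1$ only one row survives and there is nothing to check). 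Since $\tfrac{q+1}{m}<q+1$, Lemma~\ref{OAGraph} applies to its block graph $X_{OA(\frac{q+1}{m},q)}$.

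Next I would verify that this block graph coincides with $\mathrm{GP}(q^2,m)$, not merely up to isomorphism: both have vertex set $\F_{q^2}$, the columns being indexed by field elements. Two distinct columns $P,P'$ are adjacent in the block graph exactly when they share an entry in some surviving row, i.e.\ when $P$ and $P'$ lie on a common line whose slope is an $m$-th power in $\F_{q^2}^{*}$. As two distinct points of $AG(2,q)$ determine a unique line, this happens if and only if the line $PP'$ is $m$-ary; and, as recalled in Section~\ref{FF}, the slope of $PP'$ is $P-P'$ up to a factor in $\F_q^{*}\subseteq(\F_{q^2}^{*})^{m}$, so $PP'$ is $m$-ary if and only if $P-P'\in(\F_{q^2}^{*})^{m}$, which is precisely adjacency in $\mathrm{GP}(q^2,m)$. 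Hence the two graphs are literally equal.

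Finally I would match the canonical cliques under this identification of vertex sets. On the block-graph side, the canonical clique $S_{r,i}$ consists of the columns carrying entry $i$ in the row indexed by an $m$-ary slope $s_r$; by the way indices were assigned to lines, this is exactly the affine line with slope $s_r$ and index $i$, that is, an $m$-ary line of $AG(2,q)$, and letting $r$ and $i$ vary yields every $m$-ary line. On the $\mathrm{GP}(q^2,m)$ side, a canonical clique is an affine image $b+a\F_q$ of the subfield $\F_q$ (cf.\ Lemma~\ref{TSziklai}) that actually forms a clique; its difference set is $a\F_q^{*}$, and since $\F_q^{*}\subseteq(\F_{q^2}^{*})^{m}$ this is a clique if and only if $a\in(\F_{q^2}^{*})^{m}$, i.e.\ if and only if $b+a\F_q$ is an $m$-ary line. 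Thus in both graphs the canonical cliques are exactly the $m$-ary lines, so the identification carries one family onto the other.

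The main obstacle here is not any single hard computation but getting the translation in the second paragraph right: one must combine the fact that deleting rows of an orthogonal array leaves an orthogonal array with the fact that $\F_q^{*}\subseteq(\F_{q^2}^{*})^{m}$ (which holds because $m\mid q+1$), so that ``being an $m$-th power'' is a well-defined property of a line's slope rather than of a chosen representative; once this is in place, the adjacency equivalence and the canonical-clique correspondence follow directly from the constructions of this section.
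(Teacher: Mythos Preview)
Your proposal is correct and follows essentially the same route as the paper: select from the $OA(q+1,q)$ built out of $AG(2,q)$ the $\tfrac{q+1}{m}$ rows indexed by $m$-ary slopes, observe that this subarray is an $OA(\tfrac{q+1}{m},q)$, and then identify its block graph with $\mathrm{GP}(q^2,m)$ via the adjacency criterion ``the line through $P$ and $P'$ is $m$-ary''. In fact you supply more detail than the paper's own proof, which merely states the construction and defers the verification of the adjacency equivalence and the canonical-clique correspondence to \cite[Section~3]{AGLY22}.
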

\begin{proof}
In the above construction of the orthogonal array $OA(q+1,q)$, we take the $\frac{q+1}{m}$ rows that correspond to the slopes that are $m$-th powers in $\mathbb{F}_{q^2}^*$; these rows form an orthogonal array $OA(\frac{q+1}{m},q)$. The block graph of this array is isomorphic to the generalised Paley graph $\mathrm{GP}(q^2,m)$. It is straightforward to verify the correspondence between canonical cliques in the two graphs. We refer to \cite[Section 3]{AGLY22} for more details. $\square$
\end{proof}

\medskip

Lemma~\ref{PaleyAsOAGraph} allows us to translate results in Section~\ref{BGOA} in the context of generalised Paley graphs. First, it indicates that it is not interesting to study maximal cliques in $\mathrm{GP}(q^2,\frac{q+1}{2})$ and $\mathrm{GP}(q^2, q+1)$ by Lemma~\ref{TSziklai}.

\begin{corollary}\label{(q+1)/2}
In the generalised Paley graph $\mathrm{GP}(q^2,\frac{q+1}{2})$ and $\mathrm{GP}(q^2, q+1)$, each maximal clique is maximum.
\end{corollary}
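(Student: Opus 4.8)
The plan is to deduce the statement directly from Lemma~\ref{PaleyAsOAGraph} and Lemma~\ref{OAmaximal}. First I would record that both parameters are legitimate: as $q$ is an odd prime power, $m=\frac{q+1}{2}$ and $m=q+1$ are integers with $m>1$ and $m\mid (q+1)$ (and one has $q^2\equiv 1\pmod{2m}$, since $q-1$ is even and $m\mid q+1$ force $2m\mid (q-1)(q+1)$), so $\mathrm{GP}(q^2,m)$ is a well-defined undirected graph in each of the two cases and the earlier results apply to it.

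By Lemma~\ref{PaleyAsOAGraph}, $\mathrm{GP}(q^2,m)$ is isomorphic to the block graph $X_{OA((q+1)/m,\,q)}$, and this isomorphism carries maximal cliques to maximal cliques. Setting $w:=\frac{q+1}{m}$, the choice $m=\frac{q+1}{2}$ gives $w=2$ and the choice $m=q+1$ gives $w=1$, so in both cases $w\le 2<q+1$. Lemma~\ref{OAmaximal} says exactly that for $w\le 2$ every maximal clique of $X_{OA(w,q)}$ is a maximum clique: when $w=1$ the array has a single row and the claim is immediate, and when $w=2$ a maximal non-canonical clique would have size at most $(w-1)^2=1$, which is absurd, so every maximal clique is canonical, hence of size $q$. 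Transporting this back through the isomorphism of Lemma~\ref{PaleyAsOAGraph} finishes the proof.

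I do not expect any genuine obstacle here: the entire content is the translation provided by Lemma~\ref{PaleyAsOAGraph} together with the already-proved base cases $w\in\{1,2\}$ of Lemma~\ref{OAmaximal}. As an independent sanity check for $m=q+1$, note that $(\mathbb{F}_{q^2}^*)^{q+1}=\mathbb{F}_q^*$, so $\mathrm{GP}(q^2,q+1)=\operatorname{Cay}(\mathbb{F}_{q^2}^+,\mathbb{F}_q^*)$ is a disjoint union of $q$ cliques (the additive cosets of $\mathbb{F}_q$), whose maximal cliques are precisely its connected components, all of size $q$; this agrees with the conclusion and re-proves that case.
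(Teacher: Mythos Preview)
Your proof is correct and follows exactly the paper's own approach: invoke Lemma~\ref{PaleyAsOAGraph} to identify $\mathrm{GP}(q^2,m)$ with $X_{OA((q+1)/m,\,q)}$, and then apply the $w\le 2$ case of Lemma~\ref{OAmaximal}. Your closing sanity check for $m=q+1$ even matches the paper's alternative remark that $\mathrm{GP}(q^2,q+1)=\operatorname{Cay}(\mathbb{F}_{q^2}^+,\mathbb{F}_q^*)$ has an obvious direct description.
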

\begin{proof}
It follows from Lemma~\ref{OAmaximal} and Lemma~\ref{PaleyAsOAGraph}. Alternatively, note that $\mathrm{GP}(q^2, q+1)=\operatorname{Cay}(\mathbb{F}_{q^2}^+,\mathbb{F}_{q}^*)$ is known as a subfield example of cyclotomic strongly regular graphs, and it is easy to classify its cliques. $\square$
\end{proof}

\medskip

Next we deduce the weight-distribution bound for non-principal eigenvalues of $\mathrm{GP}(q^2,m)$.

\begin{corollary}\label{WDBGP}
Given an odd prime power $q$ and an integer $m>1$ such that $m$ divides $q+1$, the following statements hold.\\
{\rm (1)} A $(-\frac{q+1}{m})$-eigenfunction of the generalised Paley graph $\mathrm{GP}(q^2,m)$ has at least $\frac{2(q+1)}{m}$ non-zeroes.\\
{\rm (2)} A $\frac{(m-1)q-1}{m}$-eigenfunction of the generalised Paley graph $\mathrm{GP}(q^2,m)$ has at least $2\frac{m-1}{m}(q+1)$ non-zeroes.
\end{corollary}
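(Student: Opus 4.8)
The plan is to transport everything to the orthogonal-array model set up in Lemma~\ref{PaleyAsOAGraph}. That lemma gives an isomorphism $\mathrm{GP}(q^2,m) \cong X_{OA(w,n)}$ with $w = \frac{q+1}{m}$ and $n = q$; since a graph isomorphism carries $\theta$-eigenfunctions to $\theta$-eigenfunctions and preserves the cardinality of the support, it suffices to establish the two bounds for $X_{OA(w,n)}$ and then rewrite the relevant eigenvalues and the bounds in terms of $q$ and $m$.

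First I would check the hypothesis of Lemma~\ref{WDBOA}: since $m > 1$ we have $w = \frac{q+1}{m} \le \frac{q+1}{2} < q+1 = n+1$, so Lemma~\ref{WDBOA} applies. By Lemma~\ref{OAGraph}, the non-principal eigenvalues of $X_{OA(w,n)}$ are $n-w$ and $-w$; substituting $n = q$ and $w = \frac{q+1}{m}$ gives $-w = -\frac{q+1}{m}$, which is indeed the smallest eigenvalue since $n - w = q - \frac{q+1}{m} \ge \frac{q-1}{2} > 0$, and $n - w = q - \frac{q+1}{m} = \frac{(m-1)q-1}{m}$, matching the two eigenvalues named in the statement.

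It then remains to read off the bounds and simplify. Lemma~\ref{WDBOA} gives that a $(-w)$-eigenfunction has at least $2w = \frac{2(q+1)}{m}$ non-zeroes, which is part~(1); and that an $(n-w)$-eigenfunction has at least $2(n+1-w)$ non-zeroes, where $n+1-w = q+1 - \frac{q+1}{m} = (q+1)\cdot\frac{m-1}{m}$, so the bound equals $2\cdot\frac{m-1}{m}(q+1)$, which is part~(2). I do not expect any genuine obstacle: the entire substance is contained in the isomorphism of Lemma~\ref{PaleyAsOAGraph} and in Lemma~\ref{WDBOA}, and the only points needing a moment's care are the elementary simplifications of $n-w$ and $n+1-w$ and the remark that a graph isomorphism preserves the supports of eigenfunctions.
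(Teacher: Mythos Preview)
Your proposal is correct and follows exactly the route taken in the paper: the paper's proof is the single line ``It follows from Lemma~\ref{WDBOA} and Lemma~\ref{PaleyAsOAGraph},'' and you have simply unpacked that line by substituting $w=\frac{q+1}{m}$, $n=q$, checking $w<n+1$, and simplifying. The extra care you take in verifying the hypothesis and identifying the eigenvalues is welcome but does not diverge from the paper's argument.
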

\begin{proof}
It follows from Lemma \ref{WDBOA} and Lemma \ref{PaleyAsOAGraph}. $\square$
\end{proof}
\medskip

Finally we derive an upper bound on the size of maximal cliques which are not maximum.

\begin{proposition}\label{prop:upperbound}
Let $m\mid (q+1)$ and $2\leq m \leq \frac{q+1}{3}$. If $C'$ is a maximal clique in the generalised Paley graph $\mathrm{GP}(q^2,m)$ which is not maximum, then $|C'|\leq (\frac{q+1}{m}-1)^2$.
\end{proposition}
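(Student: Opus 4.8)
The plan is to reduce the statement to Lemma~\ref{OAmaximal} via the isomorphism of Lemma~\ref{PaleyAsOAGraph}. By Lemma~\ref{PaleyAsOAGraph}, the generalised Paley graph $\mathrm{GP}(q^2,m)$ is the block graph $X_{OA(w,n)}$ of an orthogonal array with $w = \frac{q+1}{m}$ and $n = q$. The hypothesis $2 \le m \le \frac{q+1}{3}$ guarantees that $3 \le w$ and also that $w = \frac{q+1}{m} \le q+1 = n+1$, so both hypotheses of Lemma~\ref{OAmaximal} are met (in fact $w < n+1$ strictly since $m \ge 2$). Moreover, again by Lemma~\ref{PaleyAsOAGraph}, canonical cliques in $\mathrm{GP}(q^2,m)$ correspond exactly to canonical cliques in $X_{OA(w,n)}$, and by Lemma~\ref{TSziklai} (or Lemma~\ref{PaleyAsOAGraph} together with the EKR property of block graphs) the maximum cliques of $\mathrm{GP}(q^2,m)$ — which have size $q = n$ — are precisely the canonical cliques.

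The key steps, in order, are: first, transport the clique $C'$ through the isomorphism of Lemma~\ref{PaleyAsOAGraph} to a maximal clique $C$ in $X_{OA(w,n)}$; second, observe that since $C'$ is not maximum in $\mathrm{GP}(q^2,m)$, it is not a canonical clique (as canonical cliques have size $n = q = \omega(\mathrm{GP}(q^2,m))$), so $C$ is a maximal non-canonical clique in $X_{OA(w,n)}$; third, apply Lemma~\ref{OAmaximal} with $w = \frac{q+1}{m} \ge 3$ to conclude $|C| \le (w-1)^2 = \left(\frac{q+1}{m}-1\right)^2$; finally, note $|C'| = |C|$ since isomorphisms preserve clique sizes.

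I expect no serious obstacle here: the proposition is essentially a dictionary translation of Lemma~\ref{OAmaximal}. The only points requiring a line of care are (i) checking that the numerical constraints $2 \le m \le \frac{q+1}{3}$ really do force $w \ge 3$ and $w < n+1$ — both are immediate — and (ii) making sure that "not maximum" in $\mathrm{GP}(q^2,m)$ translates to "not canonical" in the block graph, which follows because a canonical clique in $X_{OA(w,n)}$ has size $n = q$, and this equals the clique number of $\mathrm{GP}(q^2,m)$ by Lemma~\ref{TSziklai}, so any non-maximum clique cannot be canonical. One might also phrase the argument without explicitly invoking the EKR property, simply noting that canonical cliques have size $q$ while $C'$ has size strictly less than $q$; this avoids any circularity.
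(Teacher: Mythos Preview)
Your proposal is correct and follows essentially the same route as the paper: transport via Lemma~\ref{PaleyAsOAGraph} to the block graph $X_{OA(w,q)}$ with $w=\frac{q+1}{m}\ge 3$, use Lemma~\ref{TSziklai} and the canonical-clique correspondence to see that a non-maximum maximal clique becomes a non-canonical maximal clique, and conclude by Lemma~\ref{OAmaximal}. The paper's proof is identical in structure and in the lemmas invoked.
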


\begin{proof}
Let $w=(q+1)/m$. By our assumption, $w \geq 3$. By Lemma~\ref{PaleyAsOAGraph}, the generalised Paley graph $\mathrm{GP}(q^2,m)$ can be realized as the block graph $X_O$ of an orthogonal array $O=OA(w,q)$. Moreover, canonical cliques in both graphs match with each other. Recall Lemma~\ref{TSziklai} states that the only maximum cliques in $\mathrm{GP}(q^2,m)$ are those canonical cliques. Thus, if $C'$ is a maximal clique in $\mathrm{GP}(q^2,m)$ which is not maximum, then the corresponding clique $C$ in the block graph $X_{O}$ is a non-canonical maximal clique. Thus, Lemma~\ref{OAmaximal} implies that $|C'|=|C| \leq (w-1)^2$. $\square$
\end{proof}

\medskip

We remark that the upper bound $(\frac{q+1}{m}-1)^2$ can be attained; see Example~\ref{counterexample} for an infinite family of such generalised Paley graphs. Under extra assumptions, Proposition~\ref{prop:upperbound} could be improved, and we discuss such improvements in Section~\ref{stability}.

\subsection{The subgroup $Q$ of order $q+1$ in $\mathbb{F}_{q^2}^*$}
For any $\gamma = x+y\alpha\in \mathbb{F}_{q^2}^*$, where $x,y \in \F_q$, define the \emph{norm} mapping $N: \mathbb{F}^*_{q^2} \to \mathbb{F}^*_{q}$ by 
$$N(\gamma) := \gamma^{q+1} = \gamma\gamma^{q} = (x+y\alpha)(x-y\alpha) = x^2 - y^2d.$$
Note that $N$ is precisely the norm with respect to the quadratic field extension $\F_{q^2}/\F_q$. 

Observe that
$N$ is a homomorphism with $\operatorname{Im}(N) = \mathbb{F}^*_q$ and 
$$\operatorname{Ker}(N) = \big\{\, x+y\alpha\in \mathbb{F}_{q^2}^*~\big|~x^2 - y^2d = 1 \,\big\}.$$
The first isomorphism theorem for groups implies that $\operatorname{Ker}(N)$ is a subgroup in $\mathbb{F}_{q^2}^*$ of order $q+1$. In addition, it is defined by a quadratic equation, so its elements form an oval in $AG(2,q)$.

Let $\beta$ be a primitive element in $\mathbb{F}_{q^2}$ and $\omega = \beta^{q-1}$. 
Then $Q = \langle\omega\rangle$ is a subgroup of order $q+1$ in $\mathbb{F}_{q^2}^*$.
On the other hand, $\operatorname{Ker}(N)$ is also a subgroup of order $q+1$ in $\mathbb{F}_{q^2}^*$.
Since the subgroups of a cyclic group are uniquely determined by the divisors of the group order, $Q$ and $\operatorname{Ker}(N)$ coincide. Thus, we have the following lemma.

\begin{lemma}\label{norm}
For each $x+y\alpha \in Q$, we have $x^2-y^2d=1$.
\end{lemma}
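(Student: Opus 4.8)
The plan is to show directly that every element of $Q$ has norm $1$ over $\F_q$, and then unwind the definition of the norm to read off the stated equation. Since the paragraph preceding the lemma already computes $N(x+y\alpha)=x^2-y^2d$, the only substantive point is to argue that $N$ is identically $1$ on $Q$.

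First I would record the algebraic identity $\alpha^q=-\alpha$, which underlies the formula for $N$: as $x,y\in\F_q$ are fixed by the Frobenius $t\mapsto t^q$, we have $(x+y\alpha)^q=x+y\alpha^q$, and $\alpha^q=\alpha\cdot(\alpha^2)^{(q-1)/2}=\alpha\cdot d^{(q-1)/2}=-\alpha$ by Euler's criterion, because $d$ was chosen to be a non-square in $\F_q^*$. Hence, for any $\gamma=x+y\alpha\in\F_{q^2}^*$,
$$N(\gamma)=\gamma^{q+1}=\gamma\cdot\gamma^{q}=(x+y\alpha)(x-y\alpha)=x^2-y^2d.$$
Next I would observe that $Q=\langle\omega\rangle$ with $\omega=\beta^{q-1}$, and since $\beta$ has order $q^2-1$ in the cyclic group $\F_{q^2}^*$, the element $\omega$ has order $(q^2-1)/\gcd(q^2-1,q-1)=q+1$; thus $|Q|=q+1$. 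By Lagrange's theorem, $\gamma^{q+1}=1$ for every $\gamma\in Q$. Combining this with the displayed identity gives $x^2-y^2d=N(\gamma)=\gamma^{q+1}=1$ for every $\gamma=x+y\alpha\in Q$, which is the claim. (Alternatively, one may invoke the fact that a cyclic group has a unique subgroup of each order dividing the group order, so that $Q=\operatorname{Ker}(N)$ as asserted above, and then read the statement off directly from the description of $\operatorname{Ker}(N)$.)

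I do not expect a genuine obstacle here: the argument is a one-line application of Lagrange's theorem together with the explicit form of the norm map. The single place where the hypotheses are actually used is Euler's criterion $d^{(q-1)/2}=-1$, valid precisely because $d$ is a non-square in $\F_q^*$; everything else is formal.
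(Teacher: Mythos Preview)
Your proof is correct and essentially matches the paper's approach: the paper argues that $Q$ and $\operatorname{Ker}(N)$ are both subgroups of order $q+1$ in the cyclic group $\mathbb{F}_{q^2}^*$ and hence coincide, while you use Lagrange's theorem to get $\gamma^{q+1}=1$ directly---and you even note the paper's argument as your parenthetical alternative. The two framings are equivalent one-line group-theoretic observations, so there is nothing substantively different to compare.
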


Since $m \mid (q+1)$, we can divide the oval $Q=\operatorname{Ker}(N)=\langle\omega\rangle$ into $m$ parts
$$
Q = Q_0 \cup Q_1 \cup \ldots \cup Q_{m-1},
$$
where $Q_0 = \langle\omega^{m}\rangle$, and $Q_1 = \omega Q_0$, \ldots, $Q_{m-1} = \omega^{m-1}Q_0$ are cosets of $Q_0$. Note that $1$ belongs to $Q_0$, and the tangent to $Q$ at 1 is $\{1+c\alpha~|~c \in \mathbb{F}_q\}$, which, in view of Lemma \ref{calpha}, is an $m$-ary line if and only if $m$ divides $\frac{q+1}{2}$.

The following lemma gives a correspondence between $m$-th powers in $\F_{q^2}^*$ and $m$-th powers in the oval $Q$.

\begin{lemma}\label{hom} 
Let $q$ be a prime power and $m \geq 2$ be an integer such that $m$ divides $q+1$. 
The mapping $\gamma\mapsto\gamma^{q-1}$ is a homomorphism from $\mathbb{F}_{q^2}^*$ to $Q$. Moreover, 
an element $\gamma$ is an $m$-th power in $\mathbb{F}_{q^2}^*$ if and only if $\gamma^{q-1}$ is an $m$-th power in $Q$.
\end{lemma}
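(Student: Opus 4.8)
The plan is to reduce everything to the cyclic structure of $\F_{q^2}^*$ and the uniqueness of subgroups of a given index in a cyclic group, so that the statement becomes pure divisibility bookkeeping.

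First I would dispose of the first claim. For any abelian group the $(q-1)$-st power map is a homomorphism, so the only thing to verify is that the image of $\gamma \mapsto \gamma^{q-1}$ is exactly $Q$. Writing $\beta$ for a primitive element of $\F_{q^2}$ and $\omega = \beta^{q-1}$ as in the text, the image is $\{\beta^{j(q-1)} : j\} = \langle \beta^{q-1}\rangle = \langle\omega\rangle = Q$; this subgroup has order $(q^2-1)/\gcd(q-1,q^2-1) = q+1 = |Q|$, as required. (Equivalently, the image lies in $Q$ because $(\gamma^{q-1})^{q+1} = \gamma^{q^2-1} = 1$, and it has order $q+1$ since the kernel is $\{\gamma : \gamma^{q-1} = 1\} = \F_q^*$, of order $q-1$.) Denote this surjective homomorphism by $\phi$.

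For the second claim, let $Q^m = \{x^m : x \in Q\}$ be the subgroup of $m$-th powers of $Q$. Since $m \mid q+1 = |Q|$ and $Q$ is cyclic, $Q^m$ is the unique subgroup of $Q$ of index $m$. Because $\phi$ is onto, $\phi^{-1}(Q^m)$ is a subgroup of $\F_{q^2}^*$ with $[\F_{q^2}^* : \phi^{-1}(Q^m)] = [Q : Q^m] = m$. On the other hand, $\F_{q^2}^*$ is cyclic of order $q^2-1$, which is divisible by $m$, hence it has a unique subgroup of index $m$, namely its subgroup of $m$-th powers $(\F_{q^2}^*)^m$. Therefore $\phi^{-1}(Q^m) = (\F_{q^2}^*)^m$, which says precisely that $\gamma \in (\F_{q^2}^*)^m$ if and only if $\phi(\gamma) = \gamma^{q-1} \in Q^m$.

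There is no genuine obstacle here; the statement is elementary once one works inside the cyclic group $\F_{q^2}^*$. If one prefers an entirely explicit route, write $\gamma = \beta^k$: then $\gamma$ is an $m$-th power in $\F_{q^2}^*$ iff $m \mid k$, while $\gamma^{q-1} = \omega^k$ is an $m$-th power in $Q$ iff $k$ is a multiple of $m$ modulo $q+1$, and since $m \mid (q+1)$ these two conditions are the same. The only point that deserves a line of care in the structural argument is the identity $[\F_{q^2}^* : \phi^{-1}(Q^m)] = [Q : Q^m]$ (coset bijection under a surjection), after which uniqueness of index-$m$ subgroups of a cyclic group finishes the proof immediately.
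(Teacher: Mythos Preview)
Your proof is correct. Your primary argument for the second claim is a genuinely different route from the paper's: you argue structurally, using that $\phi$ is onto and that a cyclic group has a unique subgroup of each index, to conclude $\phi^{-1}(Q^m) = (\F_{q^2}^*)^m$ in one stroke. The paper instead does the explicit element-by-element check: it writes an arbitrary element as $\beta^{ms+i}$ with $0 \le i \le m-1$, computes $(\beta^{ms+i})^{q-1} = \omega^i(\omega^s)^m$, and observes this lies in $Q_0 = \langle \omega^m\rangle$ precisely when $i=0$. Your ``entirely explicit route'' at the end is essentially this same computation, phrased as a divisibility condition on the exponent $k$. The structural version is cleaner and generalises immediately (it works for any surjection of cyclic groups and any $m$ dividing the order of the target); the paper's explicit version has the small advantage of making the coset decomposition $Q = Q_0 \cup \cdots \cup Q_{m-1}$ visible, which is what the surrounding lemmas actually use.
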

\begin{proof} 
Note that for any $\gamma \in \F_{q^2}^*, \gamma^{q-1} \in \langle \beta^{q-1} \rangle=\langle \omega \rangle=Q$. For any $\gamma_1,\gamma_2 \in \mathbb{F}_{q^2}^*$, $(\gamma_1\gamma_2)^{q-1} = \gamma_1^{q-1}\gamma_2^{q-1}$, so the mapping is a homomorphism from $\mathbb{F}_{q^2}^*$ to $Q$. 

Since $m$ divides $|\mathbb{F}_{q^2}^*|=q^2-1 = (q-1)(q+1)$, there are exactly $\frac{q^2-1}{m}$ $m$-th powers in $\mathbb{F}_{q^2}^*$; they form the set $\langle\beta^m\rangle$. Also, since $m$ divides $q+1$, there are exactly $\frac{q+1}{m}$ $m$-th powers in $Q$; they form the set $Q_0$. 

Let $\beta^{ms}$ be an element from $\langle\beta^m\rangle$. Then 
$$(\beta^{ms})^{q-1} = (\beta^{q-1})^{ms} = (\omega^s)^m.$$
Thus, the image of an $m$-th power from $\mathbb{F}_{q^2}^*$ is an $m$-th power in $Q$. Let us consider any $i \in \{1,\ldots, m-1\}$ and the element $\beta^{ms+i}$, which is not an $m$-th power in $\mathbb{F}_{q^2}^*$. Computing its image, we have
$$(\beta^{ms+i})^{q-1} = (\beta^{q-1})^i(\beta^{q-1})^{ms} = \omega^i(\omega^s)^m,$$
which is not an $m$-th power in $Q$.
$\square$
\end{proof}

\medskip

The following lemma determines the neighbourhood of 0 in $Q$ in $\mathrm{GP}(q^2,m)$
\begin{lemma}\label{NeighboursOf0InQ}
The following statements hold in the graph $\mathrm{GP}(q^2,m)$.\\
{\rm (1)} If $m$ is odd, then the neighbours of $0$ in $Q$ are exactly the elements of $Q_0$.\\
{\rm (2)} If $m$ is even, then the neighbours of $0$ in $Q$ are exactly the elements of $Q_0 \cup Q_{\frac{m}{2}}$.
\end{lemma}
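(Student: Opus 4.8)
The plan is to determine exactly which elements of $Q$ are $m$-th powers in $\mathbb{F}_{q^2}^*$, since a nonzero element $\gamma$ is a neighbour of $0$ in $\mathrm{GP}(q^2,m)$ precisely when $\gamma \in (\mathbb{F}_{q^2}^*)^m$. Recall $Q = \langle \omega \rangle$ with $\omega = \beta^{q-1}$ has order $q+1$, and $Q = Q_0 \cup Q_1 \cup \cdots \cup Q_{m-1}$ where $Q_j = \omega^j Q_0$ and $Q_0 = \langle \omega^m \rangle$ is the set of $m$-th powers within the group $Q$. So the question reduces to: for which $j \in \{0,1,\ldots,m-1\}$ is the coset $Q_j$ contained in $(\mathbb{F}_{q^2}^*)^m$?

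First I would observe that $\omega^j = \beta^{(q-1)j}$, and $\beta^{(q-1)j}$ is an $m$-th power in $\mathbb{F}_{q^2}^*$ if and only if $m \mid (q-1)j$. Write $d = \gcd(m, q-1)$. Since $m \mid (q+1)$, we have $m \mid (q+1) - (q-1) = 2$ times... more carefully: any common divisor of $m$ and $q-1$ also divides $(q+1)-(q-1) = 2$, so $d = \gcd(m,q-1) \in \{1,2\}$, and $d = 2$ exactly when $m$ is even (as $q$ is odd, $q-1$ is even, so $2 \mid \gcd(m,q-1)$ iff $2 \mid m$). Now $m \mid (q-1)j$ is equivalent to $\frac{m}{d} \mid j$. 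When $m$ is odd, $d=1$, so this forces $j \equiv 0 \pmod m$, i.e. only $Q_0$ consists of $m$-th powers, giving statement (1). When $m$ is even, $d = 2$, so the condition is $\frac{m}{2} \mid j$, i.e. $j \in \{0, \frac{m}{2}\}$, giving that $Q_0 \cup Q_{m/2}$ are exactly the $m$-th powers in $Q$, which is statement (2). An alternative route is to invoke Lemma~\ref{hom}: an element $\gamma \in Q$ is an $m$-th power in $\mathbb{F}_{q^2}^*$ iff $\gamma^{q-1}$ is an $m$-th power in $Q$; writing $\gamma = \omega^j$ gives $\gamma^{q-1} = \omega^{(q-1)j}$, and one analyses when this lies in $Q_0$ using $\gcd(q-1,q+1) = 2$.

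The one point requiring a little care is that neighbours of $0$ in $Q$ means elements of $Q$ that are $m$-th powers, and I should make sure to note that $0 \notin Q$ so there is no degenerate case, and that the identification of "$m$-th power in $\mathbb{F}_{q^2}^*$" with "adjacency to $0$" is exactly the definition of $\mathrm{GP}(q^2,m)$. I do not expect any real obstacle here; the only subtlety is the elementary number-theoretic fact that $\gcd(m, q-1)$ equals $1$ or $2$ according to the parity of $m$, which follows immediately from $m \mid q+1$ and $q$ odd. Everything else is a direct coset computation inside the cyclic group $\mathbb{F}_{q^2}^*$.
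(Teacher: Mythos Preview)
Your proposal is correct and follows essentially the same approach as the paper: both reduce the question to determining when $\beta^{(q-1)j}$ is an $m$-th power, i.e., when $m \mid (q-1)j$, and both exploit $m \mid (q+1)$ to handle this divisibility condition according to the parity of $m$. The only cosmetic difference is that the paper rewrites $(q-1)t = (q+1)t - 2t$ to reduce immediately to $m \mid 2t$, whereas you compute $\gcd(m,q-1)\in\{1,2\}$ and use the standard factoring-out-the-gcd argument; these are equivalent one-line manipulations.
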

\begin{proof}
Let $\omega^t$ be an arbitrary element from $Q$, where $t = me+i$, $0 \le i \le m-1$ (more precisely, $\omega^t$ represents an arbitrary element from $Q_i$). Since $\omega = \beta^{q-1}$, we have 
$$
\omega^t = \beta^{t(q-1)} = \frac{\beta^{t(q+1)}}{\beta^{-2t}}.
$$
Since $m$ divides $q+1$, $\omega^t$ is an $m$-th power in $\mathbb{F}_{q^2}^*$ if and only if $m$ divides $2t$, or equivalently, $m$ divides $2i$. Since $0 \le i \le m-1$, we conclude that $\omega^t$ is an $m$-th power in $\mathbb{F}_{q^2}^*$ if and only if $i = 0$ or, additionally, $i = \frac{m}{2}$ when $m$ is even.
$\square$
\end{proof}

\medskip
The following lemma is crucial for understanding the adjacency structure of the oval $Q$ in the graph $\mathrm{GP}(q^2,m)$. 
\begin{lemma}\label{gammaminus1}
Let $\gamma = x+y\alpha$ be an arbitrary element from $Q$, $\gamma \ne 1$. Then, for the image of $(\gamma-1)$ under the action of the homomorphism from Lemma \ref{hom}, the equality
$$(\gamma-1)^{q-1} = -\frac{1}{\gamma}$$ holds.
\end{lemma}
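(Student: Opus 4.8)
The plan is to reduce everything to the single defining property of $Q$, namely that every $\gamma \in Q = \operatorname{Ker}(N)$ satisfies $\gamma^{q+1} = 1$, equivalently $\gamma^q = \gamma^{-1}$ (this is exactly what it means for $\gamma$ to lie in the kernel of the norm map, and $\gamma \neq 0$ since $Q \subseteq \mathbb{F}_{q^2}^*$). Combined with the fact that $x \mapsto x^q$ is the Frobenius automorphism of $\mathbb{F}_{q^2}$ over $\mathbb{F}_q$, the whole statement becomes a one-line algebraic manipulation.

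Concretely, I would first write $(\gamma-1)^{q-1} = \dfrac{(\gamma-1)^q}{\gamma-1}$, which is legitimate because $\gamma \neq 1$ by hypothesis, so the denominator is nonzero. Next, since Frobenius is additive and fixes $\mathbb{F}_q$ (in particular it fixes $1$), I would expand the numerator as $(\gamma-1)^q = \gamma^q - 1$. Then I substitute the relation $\gamma^q = \gamma^{-1}$ coming from $\gamma \in Q$, obtaining $(\gamma-1)^q = \gamma^{-1} - 1 = \dfrac{1-\gamma}{\gamma}$. Dividing through by $\gamma - 1$ gives
$$
(\gamma-1)^{q-1} = \frac{1-\gamma}{\gamma(\gamma-1)} = -\frac{1}{\gamma},
$$
as claimed.

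I do not expect any genuine obstacle here; the only points requiring a word of justification are that $\gamma \neq 1$ guarantees we may divide by $\gamma - 1$, and that $\gamma \in Q \subseteq \mathbb{F}_{q^2}^*$ guarantees $\gamma \neq 0$ so that $-1/\gamma$ is well defined. One could alternatively phrase the argument purely in terms of $\gamma = x + y\alpha$ with $x^2 - y^2 d = 1$ (Lemma~\ref{norm}) and $\alpha^q = -\alpha$, computing $(\gamma-1)^q = (x-1) - y\alpha$ and multiplying out, but invoking $\gamma^q = \gamma^{-1}$ is cleaner and avoids the coordinate computation entirely.
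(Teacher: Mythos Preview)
Your argument is correct and in fact cleaner than the paper's. Both proofs start from $(\gamma-1)^{q-1} = (\gamma-1)^q/(\gamma-1)$, but the paper then passes to coordinates: it writes $\gamma = x+y\alpha$, uses $\alpha^q=-\alpha$ to get $(x-1-y\alpha)/(x-1+y\alpha)$, multiplies top and bottom by $(x+y\alpha+1)$, and simplifies via the norm relation $x^2-y^2d=1$ from Lemma~\ref{norm}. You bypass all of this by invoking the single relation $\gamma^q=\gamma^{-1}$, which is exactly the defining condition $\gamma^{q+1}=1$ of $Q$; this is the coordinate-free content of Lemma~\ref{norm} and makes the simplification immediate. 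The coordinate computation you sketch at the end as an alternative is precisely what the paper does.
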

\begin{proof}
Recall that $\alpha^2=d$ and $\alpha^{q}=-\alpha$.
We have
\begin{align*}
(\gamma-1)^{q-1} 
&= (x-1+y\alpha)^{q-1} = \frac{(x-1+y\alpha)^{q}}{(x-1+y\alpha)} = 
\frac{x-1-y\alpha}{x-1+y\alpha} \\
&=\frac{(x-y\alpha-1)(x+y\alpha+1)}{(x+y\alpha-1)(x+y\alpha+1)}
=\frac{x^2-(y\alpha+1)^2}{(x+y\alpha)^2-1}
=\frac{x^2-y^2d-2y\alpha-1}{x^2+2xy\alpha+y^2d-1}.    
\end{align*}
By Lemma~\ref{norm}, $x^2-y^2d = 1$. Therefore, $$
(\gamma-1)^{q-1}=\frac{x^2-y^2d-2y\alpha-1}{x^2+2xy\alpha+y^2d-1}
=
\frac{-2y\alpha}{2y^2d+2xy\alpha}
=
\frac{-2y\alpha}{2y\alpha(y\alpha+x)} = 
-\frac{1}{\gamma}.
~\square
$$
\end{proof}

Let $\omega^{t_1}$ and $\omega^{t_2}$ be two arbitrary elements from $Q$ such that $0 \le t_1 < t_2 \le q$ so that $\omega^{t_1}$ and $\omega^{t_2}$ are distinct. The following lemma shows that the adjacency between $\omega^{t_1}$ and $\omega^{t_2}$ is determined by the remainder of $t_1+t_2$ modulo $m$. 

\begin{lemma}\label{t1t2}
The elements $\omega^{t_1}$ and $\omega^{t_2}$ are adjacent in $\mathrm{GP}(q^2,m)$ if and only if 
$$\frac{q+1}{2} - (t_1 + t_2) \equiv 0\pmod m.$$
\end{lemma}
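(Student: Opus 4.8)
The plan is to reduce the adjacency question for two elements of the oval $Q$ to a divisibility condition by applying the homomorphism $\gamma \mapsto \gamma^{q-1}$ from Lemma~\ref{hom} and then invoking Lemma~\ref{gammaminus1}. The key point is that, by Lemma~\ref{hom}, $\omega^{t_1}$ and $\omega^{t_2}$ are adjacent in $\mathrm{GP}(q^2,m)$ if and only if their difference $\omega^{t_1}-\omega^{t_2}$ is an $m$-th power in $\mathbb{F}_{q^2}^*$, which happens if and only if $(\omega^{t_1}-\omega^{t_2})^{q-1}$ is an $m$-th power in $Q$, i.e.\ lies in $Q_0 = \langle \omega^m \rangle$. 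So the whole problem becomes: compute $(\omega^{t_1}-\omega^{t_2})^{q-1}$ as an explicit power of $\omega$, and read off when that exponent is divisible by $m$.

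First I would factor $\omega^{t_1}-\omega^{t_2} = \omega^{t_1}(1 - \omega^{t_2-t_1})$ (or symmetrically $\omega^{t_2}(\omega^{t_1-t_2}-1)$). Applying the homomorphism from Lemma~\ref{hom} gives $(\omega^{t_1}-\omega^{t_2})^{q-1} = \omega^{t_1(q-1)} \cdot (1-\omega^{t_2-t_1})^{q-1}$. Now $\omega = \beta^{q-1}$, so $\omega^{t_1(q-1)} = \beta^{t_1(q-1)^2} = \beta^{t_1(q^2-1)-2t_1(q-1)} = \omega^{-2t_1}$, using $\beta^{q^2-1}=1$. For the second factor, set $\gamma = \omega^{t_2-t_1} \in Q$ with $\gamma \neq 1$ (since the two elements are distinct), and apply Lemma~\ref{gammaminus1}, which yields $(\gamma-1)^{q-1} = -1/\gamma = -\omega^{t_1-t_2}$. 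I would need to know the exponent of $\omega$ corresponding to $-1$: since $-1$ is the unique element of order $2$ in the cyclic group $Q$ of even order $q+1$, we have $-1 = \omega^{(q+1)/2}$. Combining, $(1-\omega^{t_2-t_1})^{q-1} = -(\omega^{t_2-t_1}-1)^{q-1}\cdot(-1)\cdots$ — more carefully, $(1-\gamma)^{q-1} = ((-1)(\gamma-1))^{q-1} = (-1)^{q-1}(\gamma-1)^{q-1} = (\gamma-1)^{q-1}$ since $q-1$ is even, so $(1-\omega^{t_2-t_1})^{q-1} = -\omega^{t_1-t_2} = \omega^{(q+1)/2 + t_1 - t_2}$. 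Multiplying, $(\omega^{t_1}-\omega^{t_2})^{q-1} = \omega^{-2t_1 + (q+1)/2 + t_1 - t_2} = \omega^{(q+1)/2 - (t_1+t_2)}$.

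Then I would conclude: this element lies in $Q_0 = \langle\omega^m\rangle$ if and only if $m$ divides its exponent $\frac{q+1}{2} - (t_1+t_2)$, which is exactly the claimed congruence. (One small check: the exponent should be interpreted modulo $q+1$, but since $m \mid (q+1)$, divisibility of the exponent by $m$ is well-defined independent of the representative.)

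The main obstacle I anticipate is purely bookkeeping with the exponents: correctly handling that $\omega^{t_1(q-1)} = \omega^{-2t_1}$ (which relies on $\omega$ having order $q+1$, so exponents live modulo $q+1$, and $(q-1)^2 \equiv -2(q-1) \equiv -2 \cdot (q-1)$, i.e.\ one factor of $q-1$ cancels against the "$\omega = \beta^{q-1}$" and the other is reduced mod $q+1$), and correctly identifying $-1 = \omega^{(q+1)/2}$ together with the parity of $q-1$. There is no deep difficulty once Lemma~\ref{gammaminus1} is in hand; the lemma is essentially an immediate corollary of it, so the challenge is just to organize the exponent arithmetic cleanly and verify the half-integer $\frac{q+1}{2}$ causes no trouble (it is an integer because $q$ is odd).
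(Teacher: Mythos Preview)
Your proposal is correct and follows essentially the same approach as the paper's proof: factor the difference, apply Lemma~\ref{gammaminus1} to the factor of the form $\gamma-1$, use $-1=\omega^{(q+1)/2}$, and reduce the exponent modulo $q+1$. The only cosmetic differences are that the paper factors out $\omega^{t_1}$ from $\omega^{t_2}-\omega^{t_1}$ (rather than from $\omega^{t_1}-\omega^{t_2}$, which is harmless since $q-1$ is even) and simplifies $\omega^{t_1(q-1)}$ by writing $t_1(q-1)=t_1(q+1)-2t_1$ directly, whereas you pass through $\beta$; the resulting exponent $\frac{q+1}{2}-(t_1+t_2)$ is the same.
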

\begin{proof}
We compute the image of $\omega^{t_2} - \omega^{t_1}$ under the action of the homomorphism from Lemma \ref{hom} and apply Lemma \ref{gammaminus1}: 
\begin{align}\label{eqomega}
(\omega^{t_2} - \omega^{t_1})^{q-1} 
&=(\omega^{t_1})^{q-1} (\omega^{t_2-t_1} - 1)^{q-1} 
=\omega^{t_1(q-1)}\frac{-1}{\omega^{t_2-t_1}}=-\omega^{t_1(q-1)-t_2+t_1} \nonumber \\
&=-\omega^{t_1(q+1)-t_1-t_2}
=-\omega^{-t_1-t_2}
=\omega^{\frac{q+1}{2}}\omega^{-t_1-t_2}
=\omega^{\frac{q+1}{2}-(t_1+t_2)}. 
\end{align}
Thus, by Lemma \ref{hom}, the elements $\omega^{t_2}$ and $\omega^{t_1}$ are adjacent in $\mathrm{GP}(q^2,m)$ if and only if $\omega^{\frac{q+1}{2}-(t_1+t_2)}$ is an $m$-th power in $Q$, which is equivalent to 
$$
\frac{q+1}{2}-(t_1+t_2) \equiv 0\pmod m.
~\square
$$
\end{proof}

\section{Constructions of cliques in generalised Paley graphs of square order} \label{sec: construction}

We have mentioned in Section~\ref{wdb} that there is a close connection between the tightness of the weight-distribution bound for the eigenfunction and the substructure of the graph. Motivated by this connection, we discover a rich geometric structure in generalised Paley graphs. In this section, we construct new families of cliques in the generalised Paley graph $\mathrm{GP}(q^2,m)$, where $m \ge 2$ and $m \mid (q+1)$. Recall that $Q = \langle\omega\rangle$ is an oval, where $\beta$ is a fixed primitive element of $\F_{q^2}$ and $\omega=\beta^{q-1}$. Also recall that that the elements of $\mathbb{F}_q^* = \langle\beta^{q+1}\rangle$ are $m$-th powers in $\mathbb{F}_{q^2}^*$ since $m \mid (q+1)$. 

\subsection{Structure of the oval $Q$ in $\mathrm{GP}(q^2,m)$}\label{sec4.1}
Let $t_1 = me_1+i_1$ and $t_2 = me_2+i_2$, where $i_1$ and $i_2$ are the remainders after division of $t_1$ and $t_2$ by $m$ (i.e. $0 \le i_1 \le m-1$ and $0 \le i_2 \le m-1$). Note that $\omega^{t_1}$ and $\omega^{t_2}$ belong to $Q_{i_1}$ and $Q_{i_2}$, respectively. Actually, Lemma \ref{t1t2} can be used to show that there are either all possible edges between $Q_{i_1}$ and $Q_{i_2}$ or no such edges.
Also, Lemma \ref{t1t2} can be used to show that each of the sets $Q_0, Q_1, \ldots, Q_{m-1}$ is either a clique or an independent set.

\begin{theorem}\label{StructureOfQ}
Given an odd prime power $q$ and an integer $m \ge 2$ such that $m$ divides $q+1$, the following statements hold for the subgraph of $\mathrm{GP}(q^2,m)$ induced by $Q$.\\
{\rm (1)} Assume that $m$ divides $\frac{q+1}{2}$. Then for any distinct $i_1, i_2$ such that $0 \le i_1 < i_2 \le m-1$, there are all possible edges between the sets $Q_{i_1}$ and $Q_{i_2}$ if $$i_1+i_2 \equiv 0\pmod m,$$ and there are no such edges if $$i_1+i_2 \not\equiv 0\pmod m.$$
{\rm (1.1)} Assume further that $m$ is odd. Then $Q_0$ is a clique, $Q_1,\ldots, Q_{m-1}$ are independent sets. In particular, $Q_1 \cup Q_{m-1}, \ldots, Q_{\frac{m-1}{2}} \cup Q_{\frac{m+1}{2}}$ induce $\frac{m-1}{2}$ complete bipartite graphs.\\
{\rm (1.2)} Assume further that $m$ is even. Then $Q_0, Q_{\frac{m}{2}}$ are cliques, and $Q_1,\ldots, Q_{\frac{m}{2}-1},$ $Q_{\frac{m}{2}+1},\ldots,Q_{m-1}$ are independent sets. In particular, if $m \geq 4$, then $Q_1 \cup Q_{m-1}, \ldots, Q_{\frac{m}{2}-1} \cup Q_{\frac{m}{2}+1}$ induce $(\frac{m}{2}-1)$ complete bipartite graphs.\\
{\rm (2)} Assume that $m$ does not divide $\frac{q+1}{2}$. Then $m$ is even, and for any distinct $i_1, i_2$ such that $0 \le i_1 < i_2 \le m-1$, there are all possible edges between the sets $Q_{i_1}$ and $Q_{i_2}$ if $$i_1+i_2 \equiv \frac{m}{2}\pmod m,$$ and there are no such edges if $$i_1+i_2 \not\equiv \frac{m}{2}\pmod m.$$
{\rm (2.1)} Assume further that $\frac{m}{2}$ is odd. Then $Q_0, Q_1,\ldots, Q_{m-1}$ are independent sets. In particular, if $m = 2$, then $Q = Q_0 \cup Q_1$ induces a complete bipartite graph; if $m \ge 6$, then
$$Q_0 \cup Q_{\frac{m}{2}}, \ldots, Q_{\frac{m-2}{4}} \cup Q_{\frac{m+2}{4}}, Q_{\frac{m}{2}+1} \cup Q_{m-1}, \ldots, Q_{\frac{3m-2}{4}} \cup Q_{\frac{3m+2}{4}}$$ induce $\frac{m}{2}$ complete bipartite graphs.\\
{\rm (2.2)} Assume further that $\frac{m}{2}$ is even. Then
$Q_{\frac{m}{4}}, Q_{\frac{3m}{4}}$ are cliques, and $Q_0,\ldots, Q_{\frac{m}{4}-1},$
$Q_{\frac{m}{4}+1},\ldots, Q_{\frac{3m}{4}-1},
Q_{\frac{3m}{4}+1},\ldots,Q_{m-1}$ are independent sets.
In particular, if $m = 4$, $Q_0 \cup Q_2$ induces a complete bipartite graph; if $m \ge 8$, then 
$$Q_0 \cup Q_{\frac{m}{2}}, \ldots, Q_{\frac{m-4}{4}} \cup Q_{\frac{m+4}{4}}, Q_{\frac{m}{2}+1} \cup Q_{m-1}, \ldots, Q_{\frac{3m-4}{4}} \cup Q_{\frac{3m+4}{4}}$$ induce $\frac{m-2}{2}$ complete bipartite graphs.
\end{theorem}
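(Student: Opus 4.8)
The plan is to deduce everything from Lemma~\ref{t1t2} together with a single elementary computation of $\frac{q+1}{2}$ modulo $m$. First I would fix the parametrisation: an element of $Q_{i}$ is precisely $\omega^{t}$ with $t\equiv i\pmod m$, and for $\omega^{t_1}\in Q_{i_1}$, $\omega^{t_2}\in Q_{i_2}$ one has $t_1+t_2\equiv i_1+i_2\pmod m$. Hence, by Lemma~\ref{t1t2}, whether two distinct elements of $Q$ are adjacent depends only on the residue $i_1+i_2\pmod m$ and on the residue $c:=\tfrac{q+1}{2}\bmod m$. This observation already yields the ``all edges or no edges between $Q_{i_1}$ and $Q_{i_2}$'' dichotomy for $i_1\ne i_2$, and shows that each $Q_i$ is a clique or an independent set according as $2i\equiv c\pmod m$ or not (for $i$ such that $|Q_i|=\frac{q+1}{m}\ge 2$; the degenerate case $m=q+1$ is vacuous).

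Next I would compute $c$. Since $m\mid(q+1)$, we have $2c\equiv q+1\equiv 0\pmod m$, so $c$ is a root of $2x\equiv 0\pmod m$; its only roots are $0$ and, when $m$ is even, $\tfrac m2$. If $m\mid\tfrac{q+1}{2}$ then $c=0$, which is case~(1). Otherwise $c\ne 0$; if $m$ were odd then $m\mid 2c$ would force $m\mid c$, a contradiction, so $m$ is even (this proves the first assertion of~(2)), and then $c=\tfrac m2$, which is case~(2). Substituting $c$ into the adjacency criterion gives exactly the stated edge conditions between distinct cosets in both cases.

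I would then identify the clique-cosets by solving $2i\equiv c\pmod m$ with $0\le i\le m-1$. In case~(1), $c=0$: the solutions are $i=0$, together with $i=\tfrac m2$ when $m$ is even, which gives the clique lists in (1.1) and (1.2). In case~(2), $c=\tfrac m2$: since $2i\bmod m$ is always even, there is a solution only if $\tfrac m2$ is even, in which case $2i\equiv\tfrac m2\pmod m$ iff $i\in\{\tfrac m4,\tfrac{3m}{4}\}$ — giving (2.1) (no clique-cosets) and (2.2). For the complete bipartite statements, I would count the unordered pairs $\{i_1,i_2\}$ with $i_1\ne i_2$ and $i_1+i_2\equiv c\pmod m$: the congruence $i_1+i_2\equiv c$ has exactly $m$ ordered solutions, of which the diagonal ones $i_1=i_2$ are precisely the clique-cosets found above, so the number of off-diagonal unordered pairs is $(m-\#\{\text{clique-cosets}\})/2$, equalling $\tfrac{m-1}{2}$, $\tfrac m2-1$, $\tfrac m2$, $\tfrac{m-2}{2}$ in cases (1.1), (1.2), (2.1), (2.2) respectively. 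I would then list these pairs explicitly, checking that the displayed endpoints (such as $\tfrac{m\pm2}{4}$ or $\tfrac{m\pm4}{4}$) are integers under the relevant parity hypothesis and that they cover exactly the off-diagonal solution set; finally I would note that for such a pair neither $i_1$ nor $i_2$ can be a clique-coset (else $i_1=i_2$ would follow from $i_1+i_2\equiv c$), so $Q_{i_1}$ and $Q_{i_2}$ are independent sets joined by all edges, i.e.\ $Q_{i_1}\cup Q_{i_2}$ induces a complete bipartite graph.

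The only genuinely delicate part is the bookkeeping: keeping the residue $c$, the parity of $m$ and of $\tfrac m2$, and the integrality of the displayed coset indices mutually consistent across the four subcases. No idea beyond Lemma~\ref{t1t2} is required, and the uniform ``$m$ ordered solutions minus the diagonal'' count handles all four enumerations at once, so the write-up should be short once the case split is organised around the value of $c$.
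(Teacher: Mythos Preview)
Your proposal is correct and follows essentially the same approach as the paper: both reduce everything to Lemma~\ref{t1t2}, compute $\frac{q+1}{2}\bmod m$ to split into the four subcases, and then solve the congruences $i_1+i_2\equiv c$ and $2i\equiv c\pmod m$. Your organisation around the single residue $c$ and the uniform ``$m$ ordered solutions minus diagonal, halved'' count is a bit slicker than the paper's explicit case-by-case enumeration of the bipartite pairs, but the underlying argument is identical.
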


\begin{proof}
We consider two arbitrary distinct elements $\omega^{t_1}, \omega^{t_2} \in Q$, where $t_1 = me_1+i_1$, $t_2 = me_2+i_1$, $0 \le i_1 \le m-1$ and $0 \le i_2 \le m-1$, which means that $\omega^{t_1}$ and $\omega^{t_2}$ represent arbitrary elements from $Q_{i_1}$ and $Q_{i_2}$, respectively. By Lemma \ref{t1t2}, the elements $\omega^{t_1}$ and $\omega^{t_2}$ are adjacent in $\mathrm{GP}(q^2,m)$ if and only if 
$
\frac{q+1}{2}-(t_1+t_2) \equiv 0\pmod m
$
or, equivalently,
\begin{equation}\label{Eqmodm}
\frac{q+1}{2}-(i_1+i_2) \equiv 0\pmod m.    
\end{equation}
Note, for any $i \in \{0,1,\ldots,m-1\}$, the subgraph induced by $Q_i$ is a clique if and only if equation (\ref{Eqmodm}) holds in the case $i_1 = i_2 = i$.

(1) Since $m$ divides $\frac{q+1}{2}$, equation (\ref{Eqmodm}) is equivalent to 
\begin{equation}\label{case12}
i_1+i_2 \equiv 0\pmod m.
\end{equation}

(1.1) Suppose $m$ is odd. The only case when $i_1 = i_2$ and equation (\ref{case12}) hold, is the case $i_1 = i_2 = 0$. Thus, $Q_0$ is a clique and $Q_1,\ldots, Q_{m-1}$ are independent sets, and Equation (\ref{case12}) implies that $Q_1 \cup Q_{m-1}, \ldots, Q_{\frac{m-1}{2}} \cup Q_{\frac{m+1}{2}}$ induce $\frac{m-1}{2}$ complete bipartite graphs, and there are no other edges in $Q$.

(1.2) Suppose $m$ is even. The only two cases when $i_1 = i_2$ and equation (\ref{case12}) hold, are the cases $i_1 = i_2 = 0$ and $i_1 = i_2 = \frac{m}{2}$.
Thus, $Q_0, Q_{\frac{m}{2}}$ are cliques, $Q_1,\ldots, Q_{\frac{m}{2}-1},Q_{\frac{m}{2}+1},\ldots,Q_{m-1}$ are independent sets, and equation (\ref{case12}) implies that $Q_1 \cup Q_{m-1}, \ldots, Q_{\frac{m}{2}-1} \cup Q_{\frac{m}{2}+1}$ induce $(\frac{m}{2}-1)$ complete bipartite graphs, and there are no other edges in $Q$.

(2) Since $m$ divides $q+1$ and does not divide $\frac{q+1}{2}$, $m$ is even. 
It follows that $\frac{q+1}{2} \equiv \frac{m}{2} \pmod m$. Thus, equation (\ref{Eqmodm}) is equivalent to 
\begin{equation}\label{case3} 
i_1+i_2 \equiv \frac{m}{2}\pmod m.    
\end{equation}

(2.1) Suppose $\frac{m}{2}$ is odd. Then equation (\ref{case3}) has no solution of the form $i_1 = i_2$, which means that $Q_0, Q_1,\ldots, Q_{m-1}$ are independent sets. Since $0 \le i_1 < i_2 \le m-1$,
equation (\ref{case3}) implies that either
\begin{equation}\label{mdiv2}
i_1+i_2 = \frac{m}{2}    
\end{equation}
 or 
\begin{equation}\label{3mdiv2}
i_1+i_2 = \frac{3m}{2}    
\end{equation}
holds. Note that the solutions to equation (\ref{mdiv2}) satisfy $0\le i_1 < i_2 \le \frac{m}{2}$; there are $\frac{m+2}{4}$ such solutions. On the other hand, the solutions of equation (\ref{3mdiv2}) satisfy $\frac{m}{2} < i_1 < i_2 \le m-1$; there are $\frac{m-2}{4}$ such solutions. Thus, if $m = 2$, $Q = Q_0 \cup Q_1$ induces a complete bipartite graph; if $m \ge 6$,
$Q_0 \cup Q_{\frac{m}{2}}, \ldots, Q_{\frac{m-2}{4}} \cup Q_{\frac{m+2}{4}}, Q_{\frac{m}{2}+1} \cup Q_{m-1}, \ldots, Q_{\frac{3m-2}{4}} \cup Q_{\frac{3m+2}{4}}$ induce $\frac{m}{2}$ complete bipartite graphs, and there are no other edges in $Q$.

(2.2) Suppose $\frac{m}{2}$ is even. 
Then equation (\ref{case3}) has two solutions of the form $i_1 = i_2$, these are $i_1 = i_2 = \frac{m}{4}$ and $i_1 = i_2 = \frac{3m}{4}$, which means that $Q_{\frac{m}{4}}$, $Q_{\frac{3m}{4}}$ are cliques; for $m=4$, $Q_0, Q_2$ are independent sets; for $m \ge 8$, $Q_0,\ldots, Q_{\frac{m}{4}-1},
Q_{\frac{m}{4}+1},\ldots, Q_{\frac{3m}{4}-1},
Q_{\frac{3m}{4}+1},\ldots,Q_{m-1}$ are independent sets.
Similar to the case (3.1), if $m = 4$, then $Q_0 \cup Q_2$ induces a complete bipartite graph; if $m \ge 8$, then 
$Q_0 \cup Q_{\frac{m}{2}}, \ldots, Q_{\frac{m-4}{4}} \cup Q_{\frac{m+4}{4}}, Q_{\frac{m}{2}+1} \cup Q_{m-1}, \ldots, Q_{\frac{3m-4}{4}} \cup Q_{\frac{3m+4}{4}}$ induce $\frac{m-2}{2}$ complete bipartite graphs, and there are no other edges in $Q$. $\square$
\end{proof}

\medskip

Note that Theorem \ref{StructureOfQ} generalises \cite[Lemma 8]{GKSV18}. In Section \ref{TightnessWDB}, we use Theorem \ref{StructureOfQ} to define eigenfunctions of $\mathrm{GP}(q^2,m)$ corresponding to the smallest eigenvalue $(-\frac{q+1}{m})$ that meet the weight-distribution bound from Lemma \ref{WDBGP}. Our goal is to show a general connection between maximal cliques in $\mathrm{GP}(q^2,m)$ and $(-\frac{q+1}{m})$-eigenfunctions that meet the weight-distribution bound. Theorem \ref{StructureOfQ} shows that the only case when all $Q_0, \ldots, Q_{m-1}$ are cliques is the case $m = 2$ and $2$ divides $\frac{q+1}{2}$ (equivalently, $m=2$ and $q \equiv 3 \pmod 4$); in other cases we have two, or one, or no cliques among $Q_0, \ldots, Q_{m-1}$; and there is a perfect matching (in the sense of complete bipartite graphs) on the sets $\{Q_i~|~ i \in \{0,\ldots, m-1\}, Q_i\text{~is an independent set}\}$. Thus, we have the following corollary.

\begin{corollary}\label{CompleteBipartiteSubgraph}
Let $q$ be an odd prime power and $m \ge 2$ be an integer such that $m$ divides $q+1$. Then,
except for the case $m = 2$ and $2$ divides $\frac{q+1}{2}$, there is at least one pair $Q_{i_1}, Q_{i_2}$ among $Q_0, \ldots, Q_{m-1}$ such that $Q_{i_1} \cup Q_{i_2}$ induces a complete bipartite subgraph in $\mathrm{GP}(q^2,m)$.
\end{corollary}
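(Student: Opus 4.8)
The plan is to derive the corollary directly from Theorem~\ref{StructureOfQ} by inspecting each of its four sub-cases. First I would record, for each sub-case, how many of the sets $Q_0, \ldots, Q_{m-1}$ are cliques: exactly one ($Q_0$) in case (1.1); exactly two ($Q_0, Q_{m/2}$) in case (1.2); none in case (2.1); and exactly two ($Q_{m/4}, Q_{3m/4}$) in case (2.2). In every sub-case the $Q_i$ that are independent sets are precisely those not on this short list, and Theorem~\ref{StructureOfQ} tells us they are partitioned into complete bipartite pairs $Q_{i_1} \cup Q_{i_2}$. Writing $c \in \{0,1,2\}$ for the number of cliques among the $Q_i$, the number of such pairs is $(m-c)/2$, which is a nonnegative integer because $m$ and $c$ always have the same parity in Theorem~\ref{StructureOfQ}.

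Next I would observe that this count $(m-c)/2$ is at least $1$ unless $c = m$, i.e.\ unless every $Q_i$ is a clique. It then remains to check that $c = m$ happens exactly in the excluded case. When $m$ is odd we have $c = 1 < m$ since $m \ge 2$; when $\frac{m}{2}$ is odd and $m \nmid \frac{q+1}{2}$ we have $c = 0 < m$; and in the two remaining sub-cases, (1.2) and (2.2), we have $c = 2$, so $c = m$ forces $m = 2$ — but in (2.2) one has $\frac{m}{2}$ even, which is incompatible with $m = 2$, so only sub-case (1.2) with $m = 2$ survives. In that sub-case the hypothesis $m \mid \frac{q+1}{2}$ reads $2 \mid \frac{q+1}{2}$, i.e.\ $q \equiv 3 \pmod 4$, which is precisely the configuration the corollary excludes.

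Finally, for completeness I would exhibit a concrete witnessing pair outside the excluded case, reading it off from the conclusion of the relevant sub-case of Theorem~\ref{StructureOfQ}: for instance $Q_1 \cup Q_{m-1}$ in cases (1.1) and (1.2) once $m \ge 3$ (with $m=2$ in case (1.2) being exactly the exception), and $Q_0 \cup Q_{m/2}$ in cases (2.1) and (2.2), the small cases $m = 2$ and $m = 4$ being covered by the displayed special cases there. I do not expect a genuine obstacle here; the only points requiring care are the parity bookkeeping that guarantees the independent $Q_i$ really do pair up, and the short verification that $m = 2$ together with $2 \mid \frac{q+1}{2}$ is the unique configuration in which no independent set is left over to be matched.
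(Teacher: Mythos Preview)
Your proposal is correct and follows essentially the same approach as the paper: the paper simply observes, in the paragraph immediately preceding the corollary, that Theorem~\ref{StructureOfQ} yields at most two cliques among $Q_0,\ldots,Q_{m-1}$ except in the case $m=2$ with $2\mid\frac{q+1}{2}$, and that the remaining independent sets are perfectly matched into complete bipartite pairs. Your case-by-case count of $c\in\{0,1,2\}$ and the parity check that $(m-c)/2\ge 1$ unless $c=m$ is a more explicit rendering of exactly this observation.
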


\subsection{The tightness of the weight-distribution bound for the smallest eigenvalue $-\frac{q+1}{m}$ of  $\mathrm{GP}(q^2,m)$}\label{TightnessWDB}
The weight-distribution bound (Corollary~\ref{WDBGP}) states that an eigefunction of the Paley graph $\mathrm{GP}(q^2,2)$ corresponding to a non-principal eigenvalue has at least $q+1$ non-zeroes; in \cite[Theorem 2]{GKSV18}, it was proved that this bound is tight. In this section we show that the weight-distribution bound is tight for the smallest eigenvalue $-\frac{q+1}{m}$ of $\mathrm{GP}(q^2,m)$ for every odd prime power $q$ and integer $m \ge 2$, where $m$ divides $q+1$. We remark that this would follow immediately from Corollary~\ref{CompleteBipartiteSubgraph} and a consequence of the main results in \cite{KMP16}; see Remark~\ref{connection}. For the sake of completeness, we provide a short proof.

In view of Corollary \ref{CompleteBipartiteSubgraph},
except for the case $m = 2$ and $2$ divides $\frac{q+1}{2}$, there is at least one pair $Q_{i_1}, Q_{i_2}$ among $Q_0, \ldots, Q_{m-1}$ such that $Q_{i_1} \cup Q_{i_2}$ induces a complete bipartite subgraph.
Define a function $f_{Q_{i_1},Q_{i_2}}:\mathbb{F}_{q^2}\to \mathbb{R}$ by the following rule. For any $\gamma \in \mathbb{F}_{q^2}$, define
$$
f_{Q_{i_1},Q_{i_2}}(\gamma):=
\begin{cases}
1 & \text{if~~}  \gamma \in Q_{i_1},\\
-1 & \text{if~~} \gamma \in Q_{i_2},\\
0 & \text{otherwise.}
\end{cases}
$$

\begin{theorem}\label{thm: WDBtight}
Let $m \geq 3$ and $m \mid (q+1)$.
The function $f_{Q_{i_1},Q_{i_2}}$ is a $(-\frac{q+1}{m})$-eigenfunction of $\mathrm{GP}(q^2,m)$, which has cardinality of support $\frac{2(q+1)}{m}$ and thus meets the weight-distribution bound in view of Corollary~\ref{WDBGP}.
\end{theorem}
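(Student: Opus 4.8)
The plan is to verify the eigenfunction equation \eqref{LocalCondition} for $f:=f_{Q_{i_1},Q_{i_2}}$ and $\theta=-\tfrac{q+1}{m}$ only at the vertices of its support $Q_{i_1}\cup Q_{i_2}$, and then to deduce it at all remaining vertices by a short orthogonality argument; this route avoids analysing the neighbourhoods of vertices lying outside the oval $Q$. Write $w=\tfrac{q+1}{m}$, $A=Q_{i_1}$, $B=Q_{i_2}$, and let $M$ be the adjacency matrix of $X=\mathrm{GP}(q^2,m)$. By Lemma~\ref{PaleyAsOAGraph} and Lemma~\ref{OAGraph}, $X$ is strongly regular and its least eigenvalue is $s:=-w=-\tfrac{q+1}{m}$. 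By the relevant case of Theorem~\ref{StructureOfQ} (equivalently, by Corollary~\ref{CompleteBipartiteSubgraph}), $A$ and $B$ are disjoint cocliques, each of size $w$, and every vertex of $A$ is adjacent to every vertex of $B$; in particular $f$ has support of size $2w=\tfrac{2(q+1)}{m}$.

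First I would check \eqref{LocalCondition} on $A\cup B$. If $\gamma\in A$, then $N(\gamma)\cap A=\varnothing$ since $A$ is a coclique, while $B\subseteq N(\gamma)$ by complete bipartiteness, so
\[
\sum_{\delta\in N(\gamma)}f(\delta)=|N(\gamma)\cap A|-|N(\gamma)\cap B|=0-w=s=s\cdot f(\gamma);
\]
the case $\gamma\in B$ is symmetric. Hence $Mf-sf$ vanishes on $A\cup B$. Next, $\sum_{\gamma}f(\gamma)=|A|-|B|=0$, so $f$ is orthogonal to the all-ones eigenvector of $M$; since $X$ is strongly regular, it follows that $f=f_s+h$, where $f_s$ is the orthogonal projection of $f$ onto the $s$-eigenspace and $h$ is an eigenvector of $M$ for some eigenvalue $\theta\ne s$ (or $h=0$). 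Then $Mf-sf=(\theta-s)h$, so the computation above forces $h$ to vanish on $A\cup B$.

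The decisive step is to evaluate $\langle f,h\rangle$ in two ways. Since eigenspaces of the symmetric matrix $M$ associated with distinct eigenvalues are orthogonal, $\langle f,h\rangle=\langle f_s+h,h\rangle=\|h\|^2$. On the other hand $f$ is supported on $A\cup B$, where $h$ is zero, so $\langle f,h\rangle=0$. Hence $\|h\|^2=0$, i.e.\ $h=0$, so $f=f_s$; as $f$ takes the values $\pm1$ on $A\cup B$ it is not identically zero, so it is a genuine $(-\tfrac{q+1}{m})$-eigenfunction. Its support has cardinality $\tfrac{2(q+1)}{m}$, which matches the weight-distribution lower bound of Corollary~\ref{WDBGP}(1), and therefore that bound is attained.

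The main obstacle is precisely the step this argument sidesteps. A direct proof of \eqref{LocalCondition} at a vertex $\gamma\notin Q_{i_1}\cup Q_{i_2}$ amounts to showing $|N(\gamma)\cap Q_{i_1}|=|N(\gamma)\cap Q_{i_2}|$ for every such $\gamma$; for $\gamma$ lying off the oval $Q$ this equality is controlled neither by Theorem~\ref{StructureOfQ} nor by the automorphisms stabilising $Q$, so a hands-on attack would seem to require delicate incomplete character-sum estimates. Replacing all of this by the single structural input that $X$ is strongly regular is what keeps the proof short. We note that the same argument also covers $m=2$ when $q\equiv1\pmod4$; the hypothesis $m\ge3$ is imposed only to keep the statement disjoint from the known Paley-graph case and to exclude the degenerate situation ($m=2$, $q\equiv3\pmod4$) of Corollary~\ref{CompleteBipartiteSubgraph}.
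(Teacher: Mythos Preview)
Your argument is correct and takes a genuinely different route from the paper's.

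The paper verifies the eigenfunction condition \eqref{LocalCondition} directly at \emph{every} vertex of $\mathrm{GP}(q^2,m)$, splitting into cases: $v=0$ (via Lemma~\ref{NeighboursOf0InQ}), $v\in Q_i$ with $i\notin\{i_1,i_2\}$ (via Theorem~\ref{StructureOfQ}), and $v\in\mathbb{F}_{q^2}\setminus(Q\cup\{0\})$, where it exploits the oval structure of $Q$: each $m$-ary secant through a point of $Q_{i_1}$ meets $Q_{i_2}$ exactly once, so the contributions from the two parts pair off. Your approach instead checks \eqref{LocalCondition} only on the support $A\cup B$, and then uses the spectral structure of strongly regular graphs (three eigenspaces, with the all-ones vector spanning the principal one) to conclude: writing $f=f_s+h$ with $h$ in the remaining nonprincipal eigenspace, the computation on the support forces $h|_{A\cup B}=0$, and then $\|h\|^2=\langle f,h\rangle=0$ since $f$ is supported where $h$ vanishes.

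What each approach buys: the paper's proof is more geometric and makes the pairing of neighbours explicit, but is specific to this setting. Your argument is shorter and proves the more general fact (essentially the content of Remark~\ref{connection}, which the paper attributes to \cite{KMP16}) that in any primitive strongly regular graph with least eigenvalue $s$, an induced $K_{|s|,|s|}$ yields an $s$-eigenfunction meeting the weight-distribution bound. One minor comment: your closing remark overstates the difficulty of the direct check off the support---the paper handles it in a few lines via the oval property, not via character sums---but this does not affect your proof.
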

\begin{proof}
Let us consider a vertex $v \in \mathbb{F}_{q^2} \setminus (Q_{i_1} \cup Q_{i_2})$; then we have $f_{Q_{i_1},Q_{i_2}}(v)=0$. We consider the following three cases:
\begin{itemize}
    \item $v=0$. In view of Lemma~\ref{NeighboursOf0InQ} and Corollary~\ref{CompleteBipartiteSubgraph}, the vertex $0$ either has no neighbours in $Q_{i_1} \cup Q_{i_2}$ or is adjacent to every vertex from $Q_{i_1} \cup Q_{i_2}$ (in the case $\{i_1,i_2\} = \{0,\frac{m}{2}\}$).
    \item $v \in Q_i$, where $i \in \{0,\ldots,m-1\}\setminus \{i_1,i_2\}$.  In view of Theorem \ref{StructureOfQ}, the vertices from $Q_i$ have no neighbours in $Q_{i_1} \cup Q_{i_2}$. 
    \item $v \in \mathbb{F}_{q^2} \setminus (Q \cup \{0\})$. If $v$ has a neighbour $w$ in $Q_{i_1}$ (resp. $Q_{i_2}$), then $v$ lies on the $m$-ary line passing through $v$ and $w$, and thus intersects $Q_{i_2}$ (resp. $Q_{i_1}$). In fact, every $m$-ary line through $w$ intersects $Q_{i_2}$ (resp. $Q_{i_1}$) exactly once, since $Q$ is an oval, $Q_{i_1} \cup Q_{i_2}$ is a complete bipartite graph with part of size $\frac{q+1}{m}$, and the number of $m$-ary lines passing through $w$ is precisely $\frac{q+1}{m}$ by Lemma~\ref{lem: numberoflines}. 
Therefore, the neighbours of $v$ in $Q_{i_1}$ and $Q_{i_2}$ come in pairs; in other words, the contribution from $Q_{i_1}$ and the contribution from $Q_{i_2}$ cancel out with each other. Thus, the condition (\ref{LocalCondition}) is satisfied for every vertex with value 0 of $f_{Q_{i_1},Q_{i_2}}$.
\end{itemize} 

Finally, for the vertices from $Q_{i_1} \cup Q_{i_2}$, condition (\ref{LocalCondition}) is satisfied since $Q_{i_1} \cup Q_{i_2}$ induces a complete bipartite graph with parts $Q_{i_1}$ and $Q_{i_2}$ of size $\frac{q+1}{m}$. $\square$
\end{proof}

\medskip

Combining the known results for Paley graphs \cite[Theorem 2]{GKSV18}, we conclude that the weight-distribution bound is tight for the eigenvalue $-\frac{q+1}{m}$ for any generalised Paley graph $\mathrm{GP}(q^2,m)$, where $m>1$ and $m \mid (q+1)$.

\begin{remark}\rm \label{connection}
In general, one can show that for a primitive strongly regular graph $X$ with eigenvalues $\theta_1$ and $\theta_2$ such that $\theta_2 < 0 < \theta_1$, the weight-distribution bound for the eigenvalue $\theta_2$ is tight if and only if there exists a complete bipartite induced subgraph in $X$ with parts $T_0$ and $T_1$ of size $-\theta_2$. This fact can be proved using the main results in \cite[Section 2.5]{KMP16} and properties of strongly regular graphs; see also the discussion in \cite[Section 4.3]{SV21}. Using this fact, Theorem~\ref{thm: WDBtight} follows from Corollary~\ref{CompleteBipartiteSubgraph} immediately. 

\end{remark}

\begin{remark}\rm
The weight-distribution bound is tight for both non-principal eigenvalues for Paley graphs of square order since Paley graphs are self-complementary. In general, when $m \geq 3$, we suspect that weight-distribution bound is not tight for the other non-principal eigenvalue $\frac{(m-1)q-1}{m}$ of $\mathrm{GP}(q^2,m)$. 
\end{remark}

\subsection{{\rm (}$\mathbb{F}_q,\alpha{\rm )}$-construction and {\rm(}$\alpha\mathbb{F}_q,1 {\rm)}$-construction of cliques in $\mathrm{GP}(q^2,m)$}

Recall that we defined $\alpha$ so that $\F_{q^2}=\F_q \oplus \alpha \F_q$ and identify $\F_{q^2}$ as the affine plane $AG(2,q)$.

In \cite{BEHW96}, a construction of maximal but not maximum cliques in Paley graphs of square order was proposed. In \cite{GMS22}, this construction was reinterpreted in terms of quadratic and non-quadratic lines (see \cite[Construction 1 and Construction 2]{GMS22}).
In this section we propose an extension of this construction for $\mathrm{GP}(q^2,m)$ in terms of the $m$-ary lines. The generalised construction also requires choosing an $m$-ary line and a point that is not on the line. In view of Lemma \ref{AutGP}, without loss of generality, we may choose any $m$-ry line (it is convenient to choose $m$-ary lines for which we can decide in general if they are $m$-ary, i.e. the lines $\mathbb{F}_q$ and $\alpha\mathbb{F}_q$; see Lemma \ref{calpha}). In view of Lemma \ref{AutGPSubgroup}, without loss of generality, we may choose the line $\F_q$ and any point outside $\F_q$.

Let us consider the $m$-ary line $\mathbb{F}_q$, the point $\alpha \notin \mathbb{F}_q$, and the pencil of $m$-ary lines through $\alpha$.
Each line of the pencil, except $\{c+\alpha~|~c \in \mathbb{F}_q\}$, intersects $\mathbb{F}_q$ at a point. In view of Lemma~\ref{lem: numberoflines}, there are $\frac{q+1-m}{m}$ such intersection points and we denote them by $c_1, \ldots, c_{\frac{q+1-m}{m}}$. So, the set $\{\alpha, c_1, \ldots, c_{\frac{q+1-m}{m}}\}$ is a clique of size $\frac{q+1}{m}$ in $\mathrm{GP}(q^2,m)$.
The points $-\alpha$ and $\alpha$ have the same neighbours in $\mathbb{F}_q$ since the sets
$\{\alpha, c_1, \ldots, c_{\frac{q+1-m}{m}}\}$
and
$\{-\alpha, c_1, \ldots, c_{\frac{q+1-m}{m}}\}$
are equivalent under the field automorphism 
$\gamma \mapsto \gamma^q$, an analogue of the complex conjugation, 
which acts as $\alpha \mapsto -\alpha$ and $c_k \mapsto c_k$. 
Therefore, the set $\{-\alpha, c_1, \ldots, c_{\frac{q+1-m}{m}}\}$ is also a clique. 
If $m$ divides $\frac{q+1}{2}$,  
in view of Lemma \ref{calpha}, the line $\{c\alpha~|~c \in \mathbb{F}_q\}$, which contains $-\alpha$ and $\alpha$, is $m$-ary. 
Hence, the cliques 
$\{\alpha, c_1, \ldots, c_{\frac{q+1-m}{m}}\}$ and $\{-\alpha, c_1, \ldots, c_{\frac{q+1-m}{m}}\}$
are combined into one. We summarise the above discussion in the following proposition. 

\begin{proposition}[{\rm (}$\mathbb{F}_q,\alpha{\rm )}$-construction]\label{Fqalpha}
    Let $q$ be an odd prime power and $m \ge 2$ be an integer, $m$ divides $q+1$.
    If $m$ does not divide $\frac{q+1}{2}$, then 
    \begin{center}
    $\{\alpha, c_1, \ldots, c_{\frac{q+1-m}{m}}\}$ 
    is a clique 
    \end{center}
    of size $\frac{q+1}{m}$ in $\mathrm{GP}(q^2,m)$; 
    if $m$ divides $\frac{q+1}{2}$, then 
    \begin{center}
    $\{\pm\alpha, c_1, \ldots, c_{\frac{q+1-m}{m}}\}$ is a clique 
    \end{center}
     of size $\frac{q+1+m}{m}$ in $\mathrm{GP}(q^2,m)$.
\end{proposition}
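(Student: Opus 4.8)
The plan is to check the clique condition directly in the geometric language of Section~\ref{FF}: a subset of $\mathbb{F}_{q^2}$ is a clique in $\mathrm{GP}(q^2,m)$ precisely when any two of its points lie on a common $m$-ary line. First I would record the relevant count. By Lemma~\ref{lem: numberoflines} there are exactly $\frac{q+1}{m}$ $m$-ary lines through $\alpha$; exactly one of them, namely $\{c+\alpha \mid c\in\mathbb{F}_q\}$, is parallel to the line $\mathbb{F}_q$, and each of the remaining $\frac{q+1}{m}-1=\frac{q+1-m}{m}$ lines meets $\mathbb{F}_q$ in a unique point. Since two distinct lines of $AG(2,q)$ share at most one point and each of these $m$-ary lines already contains $\alpha\notin\mathbb{F}_q$, distinct such lines meet $\mathbb{F}_q$ at distinct points; hence $c_1,\dots,c_{\frac{q+1-m}{m}}$ are pairwise distinct and all differ from $\alpha$.

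Next I would verify that $\{\alpha,c_1,\dots,c_{\frac{q+1-m}{m}}\}$ is a clique of size $\frac{q+1}{m}$: each pair $\{\alpha,c_i\}$ lies on one of the $m$-ary lines through $\alpha$ by construction, and each pair $\{c_i,c_j\}$ lies on the line $\mathbb{F}_q$, which is $m$-ary because $m\mid(q+1)$. To handle $-\alpha$, I would use the Frobenius map $\sigma\colon\gamma\mapsto\gamma^q$, which is an automorphism of $\mathrm{GP}(q^2,m)$ by Lemma~\ref{AutGP} (take $a=1$, $b=0$, $\varepsilon=\sigma$). It fixes $\mathbb{F}_q$ pointwise, and since $\alpha^2=d$ is a non-square in $\mathbb{F}_q^*$ we have $\alpha^q=\alpha\cdot(\alpha^2)^{(q-1)/2}=d^{(q-1)/2}\alpha=-\alpha$. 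Hence $\sigma$ carries the clique $\{\alpha,c_1,\dots,c_{\frac{q+1-m}{m}}\}$ to $\{-\alpha,c_1,\dots,c_{\frac{q+1-m}{m}}\}$, which is therefore also a clique; in particular $-\alpha$ is adjacent to each $c_i$.

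Finally, if $m\mid\frac{q+1}{2}$, then by Lemma~\ref{calpha} the line $\{c\alpha\mid c\in\mathbb{F}_q\}$ is $m$-ary, and it contains both $\alpha$ and $-\alpha$, so these vertices are adjacent. Combining this with the two cliques already found shows that every pair of vertices of $\{\alpha,-\alpha,c_1,\dots,c_{\frac{q+1-m}{m}}\}$ is adjacent, so this set is a clique of size $2+\frac{q+1-m}{m}=\frac{q+1+m}{m}$; and if $m\nmid\frac{q+1}{2}$, then $\{\alpha,c_1,\dots,c_{\frac{q+1-m}{m}}\}$ is already the asserted clique of size $\frac{q+1}{m}$. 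There is no real obstacle here; the only steps needing a little care are the distinctness of the points $c_i$ (which rests on the fact that two affine lines meet in at most one point) and the identity $\alpha^q=-\alpha$, which is exactly what makes the Frobenius automorphism produce $-\alpha$ while fixing each $c_i$.
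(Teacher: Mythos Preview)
Your proof is correct and follows essentially the same route as the paper's argument (which appears in the paragraph preceding the proposition): count the $m$-ary lines through $\alpha$ via Lemma~\ref{lem: numberoflines}, use that $\mathbb{F}_q$ is $m$-ary to get the clique $\{\alpha,c_1,\dots\}$, transport it to $\{-\alpha,c_1,\dots\}$ with the Frobenius $\gamma\mapsto\gamma^q$, and then invoke Lemma~\ref{calpha} to merge the two when $m\mid\frac{q+1}{2}$. You have added some welcome detail (the explicit verification that the $c_i$ are distinct and the computation $\alpha^q=-\alpha$), but there is no substantive difference in approach.
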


The second construction is similar to the construction from Proposition  
\ref{Fqalpha}, but deals only with the case when the line $\{c\alpha~|~c\in\mathbb{F}_q\}$ is $m$-ary (i.e., in view of Lemma \ref{calpha}, when $m$ divides $\frac{q+1}{2}$). Consider the line $\alpha \mathbb{F}_q$, the point $1 \notin \alpha \mathbb{F}_q$, and the pencil of $m$-ary lines through $1$.
They all, except $\{1+c\alpha\:|\:c \in \mathbb{F}_q\}$, intersect $\alpha\mathbb{F}_q$.
Denote the intersection points by $c_1\alpha, \ldots, c_{\frac{q+1-m}{m}}\alpha$.
In view of the automorphism 
$\gamma \mapsto -\gamma^q$, which stabilises $\alpha\mathbb{F}_q$ pointwise and swaps $-1$ and $1$, the elements $-1$ and $1$ have the same neighbours in $\alpha\mathbb{F}_q$.
Thus, $\{1, c_1\alpha, \ldots, c_{\frac{q+1-m}{m}}\alpha\}$ and $\{-1, c_1\alpha, \ldots, c_{\frac{q+1-m}{m}}\alpha\}$ are cliques of size $\frac{q+1}{m}$ in $\mathrm{GP}(q^2,m)$,
and they are combined into one since $-1$ and $1$ lie on the $m$-ary line $\mathbb{F}_q$ and thus are adjacent in $\mathrm{GP}(q^2,m)$. We summarise the above observation in the following:

\begin{proposition}[{\rm (}$\alpha\mathbb{F}_q,1{\rm )}$-construction]\label{alphaFq1}
 Let $q$ be an odd prime power and $m\ge 2$ be an integer such that $m \mid \frac{q+1}{2}$. Then
$$
\{\pm1, c_1\alpha, \ldots, c_{\frac{q+1-m}{m}}\alpha\} 
$$
forms a clique of size $\frac{q+1+m}{m}$ in $\mathrm{GP}(q^2,m)$.
\end{proposition}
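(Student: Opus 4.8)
The plan is to mimic the $(\mathbb{F}_q,\alpha)$-construction almost verbatim, but with the roles of the two special lines $\mathbb{F}_q$ and $\alpha\mathbb{F}_q$ interchanged. First I would observe that the hypothesis $m \mid \frac{q+1}{2}$ guarantees, via Lemma~\ref{calpha}, that the line $\alpha\mathbb{F}_q = \{c\alpha~|~c\in\mathbb{F}_q\}$ is itself $m$-ary, so it can legitimately play the role that $\mathbb{F}_q$ played in Proposition~\ref{Fqalpha}. By Lemma~\ref{lem: numberoflines} there are exactly $\frac{q+1}{m}$ many $m$-ary lines through the point $1 \notin \alpha\mathbb{F}_q$; exactly one of them, namely $\{1+c\alpha~|~c\in\mathbb{F}_q\}$, is parallel to $\alpha\mathbb{F}_q$, and the remaining $\frac{q+1}{m}-1 = \frac{q+1-m}{m}$ of them each meet $\alpha\mathbb{F}_q$ in a unique point, which I name $c_1\alpha,\ldots,c_{\frac{q+1-m}{m}}\alpha$. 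Since any two points on an $m$-ary line differ by an $m$-th power (as explained in Section~\ref{FF}), the set $\{1, c_1\alpha,\ldots,c_{\frac{q+1-m}{m}}\alpha\}$ is a clique of size $\frac{q+1}{m}$: the point $1$ is joined to each $c_k\alpha$ along one of these $m$-ary lines, and the $c_k\alpha$ are pairwise joined because they all lie on the common $m$-ary line $\alpha\mathbb{F}_q$.

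Next I would bring in the point $-1$. Consider the map $\gamma\mapsto -\gamma^q$; by Lemma~\ref{AutGP} (taking $\varepsilon$ to be the Frobenius and $a=-1$, which lies in $S$ because $-1 = \beta^{(q^2-1)/2}$ and $m \mid \frac{q+1}{2}$ forces $m \mid \frac{q^2-1}{2}$), this is an automorphism of $\mathrm{GP}(q^2,m)$. It fixes $\alpha$ since $\alpha^q = -\alpha$, hence fixes every element $c\alpha$ of $\alpha\mathbb{F}_q$ (because $c\in\mathbb{F}_q$ gives $c^q=c$), so it fixes each $c_k\alpha$; and it sends $1\mapsto -1$. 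Applying this automorphism to the clique $\{1, c_1\alpha,\ldots,c_{\frac{q+1-m}{m}}\alpha\}$ therefore yields that $\{-1, c_1\alpha,\ldots,c_{\frac{q+1-m}{m}}\alpha\}$ is also a clique. Equivalently, $-1$ and $1$ have exactly the same neighbours among the points of $\alpha\mathbb{F}_q$.

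It remains to glue the two cliques into one. The union $\{\pm 1, c_1\alpha,\ldots,c_{\frac{q+1-m}{m}}\alpha\}$ is a clique provided $-1$ and $1$ are themselves adjacent in $\mathrm{GP}(q^2,m)$, i.e. provided $2 = 1-(-1)$ is an $m$-th power in $\mathbb{F}_{q^2}^*$. This is immediate: $2\in\mathbb{F}_q^* = \langle\beta^{q+1}\rangle$, and since $m\mid(q+1)$ every element of $\mathbb{F}_q^*$ is an $m$-th power in $\mathbb{F}_{q^2}^*$, so $-1$ and $1$ lie on the $m$-ary line $\mathbb{F}_q$ and are adjacent. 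Hence $\{\pm1, c_1\alpha,\ldots,c_{\frac{q+1-m}{m}}\alpha\}$ is a clique, and its size is $2 + \frac{q+1-m}{m} = \frac{q+1+m}{m}$, which is exactly the claim.

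There is essentially no deep obstacle here; the only point that needs a little care is verifying that $\gamma\mapsto -\gamma^q$ genuinely belongs to the automorphism group described in Lemma~\ref{AutGP}, i.e. that $-1\in S$, which uses the hypothesis $m\mid\frac{q+1}{2}$ rather than merely $m\mid(q+1)$ — this is precisely where the stronger divisibility condition enters. Everything else is a bookkeeping exercise with the affine plane structure already set up in Section~\ref{FF} and the counting from Lemma~\ref{lem: numberoflines}. I would also remark, as the paper does for the $(\mathbb{F}_q,\alpha)$-construction, that one could phrase the whole argument symmetrically with Proposition~\ref{Fqalpha} by noting that swapping $\mathbb{F}_q\leftrightarrow\alpha\mathbb{F}_q$ and $\alpha\leftrightarrow 1$ is an isometry of the configuration whenever both lines are $m$-ary.
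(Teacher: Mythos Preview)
Your proof is correct and follows essentially the same argument as the paper: build the clique $\{1, c_1\alpha, \ldots\}$ from the pencil of $m$-ary lines through $1$, use the automorphism $\gamma \mapsto -\gamma^q$ (which fixes $\alpha\mathbb{F}_q$ pointwise and swaps $\pm 1$) to show $-1$ has the same neighbours on $\alpha\mathbb{F}_q$, and then merge the two cliques via the adjacency of $\pm 1$ on the $m$-ary line $\mathbb{F}_q$. One small remark on your closing commentary: the stronger hypothesis $m\mid\frac{q+1}{2}$ is in fact already essential at the very first step (for $\alpha\mathbb{F}_q$ to be $m$-ary via Lemma~\ref{calpha}), whereas $-1\in S$ follows from the weaker condition $m\mid(q+1)$ alone, since $-1\in\mathbb{F}_q^*$ and every element of $\mathbb{F}_q^*$ is an $m$-th power in $\mathbb{F}_{q^2}^*$.
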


In the case of Paley graphs, it is known that the $(\F_q, \alpha)$-construction gives a maximal clique (and so does the $(\alpha\F_q,1)$-construction; see \cite[Lemma 1]{GMS22}). Lemma~\ref{Paleymaximal} was first proved by Baker et al. \cite{BEHW96}. Sz{\H{o}}nyi outlined a different proof of Lemma~\ref{Paleymaximal} using character sums in \cite[Page 204]{Sz97}; see also a related discussion in \cite{KK09}. 

\begin{lemma} [\cite{BEHW96}]\label{Paleymaximal}
In the Paley graph of order $q^2$, the $(\F_q, \alpha)$ construction gives a maximal clique of size $\frac{1}{2}(q+1)$ or $\frac{1}{2}(q+3)$, accordingly as $q \equiv 1\pmod 4$ or $q \equiv 3 \pmod 4$.
\end{lemma}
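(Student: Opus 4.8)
The plan is to show directly that the clique $K=\{\alpha,c_1,\dots,c_{(q+1-2)/2}\}$ (in the $m=2$ case, when the $(\F_q,\alpha)$-construction involves either $\alpha$ alone or $\pm\alpha$) cannot be extended. I would work in the affine-plane picture: $K\setminus\{\alpha\}$ consists of the intersection points with $\F_q$ of the quadratic lines through $\alpha$, together with $\alpha$ itself, so $|K|=\frac{q+1}{2}$ when $q\equiv 1\pmod 4$ (the line $\F_q\alpha$ is non-quadratic) and $|K|=\frac{q+3}{2}$ when $q\equiv 3\pmod 4$ (then $-\alpha$ is added and $\pm\alpha$ are joined). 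Suppose for contradiction that there is a vertex $z\in\F_{q^2}$ adjacent to every element of $K$.

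\textbf{Key steps.} First, I would dispose of the easy cases: $z$ cannot lie on the line $\F_q$, since $\alpha$'s only quadratic line meeting $\F_q$ is accounted for by the $c_i$'s and $\alpha\notin\F_q$; and $z$ cannot lie on the line $\F_q\alpha$ for a parallel reason unless $q\equiv 3\pmod 4$, in which case one checks $z=-\alpha$ is already in $K$. So $z=x+y\alpha$ with $x,y\in\F_q^*$ and $z\notin K$. The heart of the argument is a counting/character-sum estimate. Adjacency of $z$ to $c_i$ means $z-c_i$ is a square in $\F_{q^2}^*$; writing $z-c_i=(x-c_i)+y\alpha$ and using $N(z-c_i)=(x-c_i)^2-y^2d$, together with the fact (from Lemma~\ref{hom}-type reasoning, or directly) that being a square in $\F_{q^2}^*$ is governed by whether $(z-c_i)^{q-1}$ is a square in $Q$, I would translate "$z$ adjacent to all $c_i$ and to $\alpha$" into the statement that a certain quadratic character $\chi$ of $\F_{q^2}$ takes the value $+1$ at roughly $\frac{q+1}{2}$ prescribed collinear points $\{\alpha\}\cup\{c_i\}$. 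These points all lie on lines through $\alpha$, so after the substitution $t\mapsto \frac{1}{t-x}$ or similar, the condition becomes that $\chi$ is constant on an affine line $u+\F_q$ inside $\F_{q^2}$ (or on a large, structured subset of it). Then Katz's bound (Lemma~\ref{Katz}) — or in this classical $m=2$ case, Weil's bound, which is what Baker et al.\ and Sz\H{o}nyi used — forces $q\le (2\cdot 1-1)\sqrt{q}$, a contradiction, provided $z$ has degree $2$ over $\F_q$; the degree-$1$ case (i.e.\ $z\in\F_q$ or $z\in\F_q\alpha$) was handled above.

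\textbf{Main obstacle.} The delicate point is setting up the bijection between "$z$ is adjacent to the whole $(\F_q,\alpha)$-construction" and "a quadratic character is constant on a full affine line", because the $c_i$'s are exactly the $q-1$ (or $q+1$, counting points at infinity) points one gets from quadratic slopes, not an arbitrary line, and $\alpha$ itself sits off $\F_q$; one must verify that adding the point $\alpha$ (and, when $q\equiv 3\pmod 4$, also $-\alpha$) makes the relevant set precisely a line's worth of points after the Möbius-type change of variable, with no off-by-one slack. Concretely, I expect to use the map $c\mapsto (\alpha-c)^{q-1}\in Q$ from Lemma~\ref{gammaminus1}-style computations to see that the neighbours of $\alpha$ in $\F_q$ correspond bijectively to the squares in a coset of $Q_0$ in $Q$, and then argue that $z$ being simultaneously adjacent to all of them pins down $\chi$ on that coset, i.e.\ on $\frac{q+1}{2}+1$ elements of $Q$, which is already more than the Weil/Katz bound $\frac{q+1}{2}+\sqrt{q}$ allows once $q$ is large — and a direct finite check (or the polynomial-method argument of \cite{BEHW96}) settles the remaining small $q$. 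I would follow the character-sum route of Sz\H{o}nyi rather than the original combinatorial argument, since it generalises and is shorter, citing \cite{BEHW96,Sz97,KK09} for the historical statement.
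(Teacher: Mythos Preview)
Your character-sum route is genuinely different from what the paper does: the paper recovers Lemma~\ref{Paleymaximal} in Corollary~\ref{cor: recover} as the $m=2$ case of Theorem~\ref{thm: nearlymaximal}, whose engine is the purely combinatorial Proposition~\ref{prop: samenhbd}, with no character sums or Weil-type estimates anywhere. That said, your sketch has a real gap at the reduction step, and the resolution you propose in the ``Main obstacle'' paragraph does not work.

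The point you are missing is this. If $z\in\F_{q^2}\setminus\F_q$ is adjacent to every element of $N(\alpha)\cup\{\alpha\}$, then $N(\alpha)\subseteq N(z)$, and since $|N(\alpha)|=|N(z)|=\tfrac{q-1}{2}$ by equation~\eqref{Nu} one has $N(\alpha)=N(z)$. This \emph{equality}, not the one-sided inclusion you work with, is what upgrades the information from $(q-1)/2$ points to all of $\F_q$: it says $\chi(\alpha-c)=\chi(z-c)$ for every $c\in\F_q$, i.e.\ $\chi\big((\alpha-c)(z-c)\big)=1$ for all $c\in\F_q$, and only then can a Weil bound bite (taking norms to $\F_q$ turns this into the quadratic character of $\F_q$ being identically $+1$ on a squarefree quartic, forcing $q\le 9$). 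Your proposal instead tries to feed the $(q+1)/2$ values $\{z-\alpha\}\cup\{z-c_i\}$ directly into Katz's lemma; but these are not an affine line, no M\"obius substitution turns them into one, and the comparison ``$(q+1)/2+1$ against $(q+1)/2+\sqrt q$'' you invoke at the end is not any form of the Weil/Katz inequality. The two bijections $c\mapsto(\alpha-c)^{q-1}$ and $c\mapsto(z-c)^{q-1}$ from $\F_q$ to $Q\setminus\{1\}$ are different maps, so knowing the first lands in $Q_0$ on $N(\alpha)$ says nothing by itself about where the second lands.

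By contrast, after the same first step $N(\alpha)=N(z)$ the paper proceeds structurally: Proposition~\ref{prop: samenhbd} (using only that $p\nmid(m-1)=1$) produces $a\in\F_q$ and $t\in\F_q^*\setminus\{1\}$ with $\alpha-a=t(z-a)$ and $t^d=1$, where $d=\gcd\big(q-1,\tfrac{q-3}{2}\big)$. A direct computation gives $d=1$ when $q\equiv 1\pmod 4$ and $d=2$ when $q\equiv 3\pmod 4$, yielding the two cases of the lemma immediately, with no analytic estimate and no residual finite check.
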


Motivated by the above lemma, it is tempting to conjecture that in the generalised Paley graph $\mathrm{GP}(q^2,m)$, the cliques from the $(\F_q, \alpha)$-construction and the $(\alpha\F_q,1)$-construction are also maximal. However, it seems challenging to extend the Lemma~\ref{Paleymaximal} to the case of generalised Paley graphs. In fact, the naive extension is not always true and we have to refine it by adding the following two extra assumptions; note that in the case of Paley graphs, these two assumptions are automatically satisfied as $m=2$.

\begin{conjecture}\label{Conj1}
Assume that $p \nmid (m-1)$ and $2 \leq m \leq \frac{q+1}{3}$. Then the cliques from Proposition~\ref{Fqalpha} and Proposition~\ref{alphaFq1} are maximal.
\end{conjecture}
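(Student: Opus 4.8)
The statement to prove is Conjecture~\ref{Conj1}: under the hypotheses $p \nmid (m-1)$ and $2 \le m \le (q+1)/3$, the cliques produced by the $(\F_q,\alpha)$-construction and the $(\alpha\F_q,1)$-construction are maximal. Since it is labelled a conjecture, I would not expect a complete proof; instead I would aim for a proof under an additional largeness hypothesis on $q$ relative to $m$ (which is where Corollary~\ref{cor: consecutivempowers} and the Katz bound become available), and phrase the general statement as the conjecture it is. The core idea is to suppose, for contradiction, that one of these cliques $C$ extends to a clique $C \cup \{v\}$ with $v \notin C$, and then derive a contradiction by analysing where $v$ can lie, using the geometric translation of $\mathrm{GP}(q^2,m)$ as the block graph of the orthogonal array $OA(\tfrac{q+1}{m},q)$ (Lemma~\ref{PaleyAsOAGraph}) together with the oval/norm machinery of Section~3.

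\textbf{Step 1: Set-up via orthogonal arrays and a reduction on $v$.} By Lemma~\ref{AutGP} and Lemma~\ref{AutGPSubgroup} we fix the $m$-ary line to be $\F_q$ and the external point to be $\alpha$, so $C=\{\alpha,c_1,\dots,c_{(q+1-m)/m}\}$ (adding $-\alpha$ in the case $m\mid\frac{q+1}{2}$). A new vertex $v$ must be adjacent to every $c_k$; in the $OA$ picture, $v$ shares a row-entry with each of the $\frac{q+1}{m}-1$ canonical "columns'' meeting $\F_q$ at the $c_k$. First I would rule out $v\in\F_q$: the $c_k$ together with $\alpha$ already determine, via the pencil through $\alpha$, essentially all the $m$-ary directions, so any $v\in\F_q$ distinct from the $c_k$ would lie on an $m$-ary line through $\alpha$ — impossible unless $v$ is one of the $c_k$ or $v=0$, and $0\notin C$ is handled directly. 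So $v\notin\F_q$, and after conjugating by the $\F_q$-stabiliser (Lemma~\ref{AutGPSubgroup}) I may assume $v$ lies on a controlled line, e.g. $v\in\alpha\F_q$ or $v=\alpha+c$ for $c\ne 0$; the case analysis here is the bookkeeping cost.

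\textbf{Step 2: Translate adjacency of $v$ into a character-sum / consecutive-$m$-th-power statement.} The points $c_1,\dots,c_{(q+1-m)/m}$ are exactly the non-$m$-ary... more precisely, $\F_q\setminus\{c_1,\dots,c_{(q+1-m)/m},0\}$ consists of the points whose line to $\alpha$ is \emph{not} $m$-ary, i.e. the $c'$ for which $c'-\alpha$ (equivalently its slope) is a non-$m$-th-power. For $v=x+y\alpha$ with $y\ne 0$, adjacency of $v$ to $c_k$ says $v-c_k$ is an $m$-th power in $\F_{q^2}^*$. Using the homomorphism $\gamma\mapsto\gamma^{q-1}$ of Lemma~\ref{hom} and the identities of Lemma~\ref{gammaminus1}/Lemma~\ref{t1t2}, "$v-c_k$ is an $m$-th power'' becomes a congruence condition modulo $m$ on a discrete-log-type quantity attached to $c_k$. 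Running $k$ over all $(q+1-m)/m$ values forces a long run of field elements (essentially a coset-translate of $\F_q$ inside the line, or a translate of $\F_p$ after intersecting with a subfield line) all lying in $\langle\beta^m\rangle$. This is precisely the scenario Corollary~\ref{cor: consecutivempowers} forbids once $p>(2n-1)^2$ where $q=p^n$: it says an element of full degree $2n$ over $\F_p$ cannot have $u+\F_p$ entirely inside the $m$-th powers.

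\textbf{Step 3: Close the contradiction; isolate the role of $p\nmid(m-1)$.} The hypothesis $p\nmid(m-1)$ enters exactly when I need the relevant element $u$ (built from $v$ and the construction data) to have degree $2n$ over $\F_p$ rather than lying in a proper subfield — if $p\mid(m-1)$ one can check the construction data collapses into $\F_q$ or a smaller field and the Katz estimate no longer applies, which is also why Example~\ref{counterexample} (the subfield cliques) shows the statement genuinely fails without it. The bound $2\le m\le\frac{q+1}{3}$ guarantees $w=\frac{q+1}{m}\ge 3$, so Proposition~\ref{prop:upperbound} applies and any non-maximum extension would have to stay below $(w-1)^2$; combined with the character-sum contradiction this finishes the argument. \textbf{The main obstacle} I anticipate is Step 2: cleanly converting "$v$ is adjacent to all of $C$'' into a \emph{single} incomplete character sum over a full line (or full $\F_p$-translate) to which Lemma~\ref{Katz} applies — the indices $c_k$ are defined geometrically as intersection points, not as a multiplicative progression, so one must first show the set $\{c_k\}$ (or its complement in $\F_q$) is itself a coset-structured set, which is where the norm-map description of $Q$ and Lemma~\ref{t1t2} do the heavy lifting. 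If that conversion can only be pushed through for $q$ large compared to $m$ (via $p>(2n-1)^2$), then, as noted, the honest outcome is a proof of the conjecture in that range and a statement of Conjecture~\ref{Conj1} in general.
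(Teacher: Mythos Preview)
The paper does not prove Conjecture~\ref{Conj1}; it is stated as a conjecture, verified by SageMath for $q\le 500$, and Section~\ref{partialresult} gives only partial results (Theorem~\ref{thm: nearlymaximal} and Corollary~\ref{cor: recover}). So the right comparison is between your proposed partial argument and the paper's partial argument, and they differ in a way that matters.

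The paper's route is cleaner and hinges on one observation you are missing: writing $C=N(\alpha)\cup\{\alpha\}$ with $N(\alpha)=\{c_1,\dots,c_{(q+1-m)/m}\}$, any extending vertex $v\in\F_{q^2}\setminus\F_q$ must satisfy $N(\alpha)\subset N(v)$, and since $|N(\alpha)|=|N(v)|=\tfrac{q+1}{m}-1$ by equation~\eqref{Nu}, one gets the \emph{equality} $N(\alpha)=N(v)$. This converts the problem into classifying pairs $u,v$ with identical $\F_q$-neighbourhoods. Proposition~\ref{prop: samenhbd} then gives a dichotomy: either $u-v\in\F_q$ (horizontal line), forcing $N(u)$ to be a union of $\F_p$-cosets and hence $p\mid(m-1)$; or $u-a=t(v-a)$ for some $a\in\F_q$ and $t\in\F_q^*$ with $t^d=1$, $d=\gcd(q-1,\tfrac{q+1}{m}-2)$. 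The hypothesis $p\nmid(m-1)$ kills the first branch; the second branch bounds the number of admissible $v$ by $d-1$, yielding Theorem~\ref{thm: nearlymaximal}. The conjecture itself is established only when $d\le 2$ (Corollary~\ref{cor: recover}); for larger $d$ it remains open.

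Your Step~2 has a genuine gap. The set $\{c_k\}=N(\alpha)$ is defined by the condition ``$\alpha-c$ is an $m$-th power'', so it is neither a line nor an additive coset, and there is no direct way to feed ``$v-c_k$ is an $m$-th power for all $k$'' into a single incomplete character sum over $\F_p$ as Lemma~\ref{Katz} requires. You yourself flag this as the main obstacle, and it is: the paper only reaches a Katz-type input \emph{after} the $N(u)=N(v)$ reduction, and even then only in Proposition~\ref{prop: sufficient}, whose purpose is to relax $p\nmid(m-1)$ rather than to drive the main argument. Relatedly, your Step~3 misidentifies the role of $p\nmid(m-1)$: it does not enter via a degree-$2n$ condition for Katz, but via the additive-coset obstruction in the horizontal-line case of Proposition~\ref{prop: samenhbd}. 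I would reorganise your plan around the equality $N(\alpha)=N(v)$ and the dichotomy of Proposition~\ref{prop: samenhbd}; the character-sum machinery is a secondary tool, not the engine.
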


Using SageMath, we have verified Conjecture~\ref{Conj1} for all pairs $(q,m)$ for which $q \leq 500$ and $m \mid (q+1)$. We remark that the two assumptions in the statement of Conjecture~\ref{Conj1} cannot be dropped. We will see in Example~\ref{counterexample} that if $p \mid (m-1)$, then it is possible that these cliques are far from being maximal. In view of Corollary~\ref{(q+1)/2}, it is also necessary to avoid the cases $m=q+1$ and $m=\frac{q+1}{2}$. 

In Section~\ref{partialresult}, we will show that Conjecture~\ref{Conj1} is true under some extra assumptions in Corollary~\ref{cor: recover}, and show that the cliques constructed are nearly maximal in the general case in Theorem~\ref{thm: nearlymaximal}. In particular, we recover Lemma~\ref{Paleymaximal} as a special case. We will only focus on the maximality of the $(\F_q, \alpha)$-construction since it is evident that the two constructions are essentially equivalent.

\subsection{$Q_0$-construction and $\alpha Q_0$-construction of cliques in $\mathrm{GP}(q^2,m)$}

We first consider the construction of a clique in $\mathrm{GP}(q^2,m)$ based on $Q_0$.

\begin{proposition}[$Q_0$-construction]\label{0Q0Clique}
Let $q$ be an odd prime power and $m\ge 2$ be an integer such that $m \mid \frac{q+1}{2}$. Then 
$\{0\} \cup Q_0$ is a clique of size $\frac{q+1+m}{m}$ in $\mathrm{GP}(q^2,m)$.
\end{proposition}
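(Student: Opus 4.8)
The plan is to show that $\{0\} \cup Q_0$ is a clique by verifying that (i) $0$ is adjacent to every element of $Q_0$ and (ii) any two distinct elements of $Q_0$ are adjacent, and then to count that $|\{0\} \cup Q_0| = 1 + \frac{q+1}{m} = \frac{q+1+m}{m}$.

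First I would handle adjacency with $0$. Since $m \mid \frac{q+1}{2}$, in particular $m$ is odd or $m$ is even with $\frac{m}{2}$ dividing $\frac{q+1}{2}$; regardless, by Lemma~\ref{NeighboursOf0InQ}, the neighbours of $0$ inside $Q$ are exactly $Q_0$ when $m$ is odd, and $Q_0 \cup Q_{m/2}$ when $m$ is even. In both cases $Q_0$ is contained in the neighbourhood of $0$, so $0$ is adjacent to each element of $Q_0$. Alternatively, one can note directly that every element of $Q_0 = \langle \omega^m \rangle$ is an $m$-th power in $\F_{q^2}^*$ (it is $(\omega^s)^m$ for suitable $s$), hence adjacent to $0$.

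Next I would verify that $Q_0$ itself induces a clique. This is where the hypothesis $m \mid \frac{q+1}{2}$ is essential and where I would invoke Theorem~\ref{StructureOfQ}: in case (1.1) ($m \mid \frac{q+1}{2}$, $m$ odd) and case (1.2) ($m \mid \frac{q+1}{2}$, $m$ even), the set $Q_0$ is explicitly asserted to be a clique. Alternatively, and more self-containedly, I would apply Lemma~\ref{t1t2}: two distinct elements $\omega^{t_1}, \omega^{t_2}$ with $t_1 \equiv t_2 \equiv 0 \pmod m$ are adjacent iff $\frac{q+1}{2} - (t_1+t_2) \equiv 0 \pmod m$, and since $m \mid \frac{q+1}{2}$ and $m \mid t_1, m\mid t_2$, this congruence holds automatically. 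So all pairs in $Q_0$ are adjacent.

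Finally I would combine the two parts: $0$ is adjacent to all of $Q_0$, and $Q_0$ is a clique, so $\{0\}\cup Q_0$ is a clique; its size is $1 + |Q_0| = 1 + \frac{q+1}{m} = \frac{q+1+m}{m}$, since $Q_0 = \langle \omega^m\rangle$ has order $\frac{q+1}{m}$ (the divisor of $|Q| = q+1$ corresponding to $m$), and $0 \notin Q \subseteq \F_{q^2}^*$. I do not expect a genuine obstacle here: the content is already packaged in Lemma~\ref{NeighboursOf0InQ}, Lemma~\ref{t1t2}, and Theorem~\ref{StructureOfQ}; the only point requiring a moment's care is confirming that the hypothesis $m \mid \frac{q+1}{2}$ (rather than merely $m \mid q+1$) is exactly what makes the congruence in Lemma~\ref{t1t2} vanish for pairs inside $Q_0$ — without it, $\frac{q+1}{2} \equiv \frac{m}{2} \pmod m$ and $Q_0$ would fail to be a clique.
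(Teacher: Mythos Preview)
Your proposal is correct and follows essentially the same approach as the paper: invoke Lemma~\ref{NeighboursOf0InQ} to see that $0$ is adjacent to all of $Q_0$, and invoke Theorem~\ref{StructureOfQ} (or equivalently Lemma~\ref{t1t2}) to see that $Q_0$ is a clique under the hypothesis $m\mid\frac{q+1}{2}$. Your write-up is in fact a bit more careful than the paper's, which states ``if $m$ divides $q+1$, then $Q_0$ is a clique'' where ``$\frac{q+1}{2}$'' is clearly intended; you correctly isolate why the stronger divisibility hypothesis is essential.
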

\begin{proof}
By Theorem~\ref{StructureOfQ}, if $m$ divides $q+1$, then $Q_0$ is a clique. In view of Lemma \ref{NeighboursOf0InQ}, elements of $Q_0$ always lie in the neighbourhood of $0$. Thus, $\{0\} \cup Q_0$ is a clique of size $\frac{q+1+m}{m}$ in $\mathrm{GP}(q^2,m)$. $\square$
\end{proof}

\medskip
Note that the cliques from Proposition \ref{0Q0Clique} occur under the same assumptions as the cliques from Proposition \ref{alphaFq1}. However, as Theorem \ref{StructureOfQ} states, there are no cliques among $Q_0, \ldots, Q_{m-1}$ in the case when $m$ does not divide $\frac{q+1}{2}$ and $\frac{m}{2}$ is odd. In the rest of this section, we deal with the oval $\alpha Q$ and describe the adjacency structure of $\alpha Q$ in $\mathrm{GP}(q^2,m)$. In particular, it turns out that $\alpha Q_0$ is always a clique in $\mathrm{GP}(q^2,m)$. This result is a partial generalisation of \cite[Construction 4]{GMS22}.

Let $\omega^{t_1}$ and $\omega^{t_2}$ be two arbitrary elements from $Q$ such that $0 \le t_1 < t_2 \le q$; then $\alpha\omega^{t_1}$ and $\alpha\omega^{t_2}$ represent two arbitrary elements from $\alpha Q$.
The following lemma describes the adjacency between $\alpha\omega^{t_1}$ and $\alpha\omega^{t_2}$. 
\begin{lemma}\label{t1t2alpha}
The elements $\alpha\omega^{t_1}$ and $\alpha\omega^{t_2}$ are adjacent in $\mathrm{GP}(q^2,m)$ if and only if 
$$t_1 + t_2 \equiv 0\pmod m.$$
\end{lemma}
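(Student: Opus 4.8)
The plan is to mimic the computation already carried out in Lemma~\ref{t1t2}, replacing the pair $\omega^{t_1},\omega^{t_2}$ by $\alpha\omega^{t_1},\alpha\omega^{t_2}$, and then to track the extra factor of $\alpha$ that this introduces through the homomorphism $\gamma\mapsto\gamma^{q-1}$ of Lemma~\ref{hom}. First I would write
$$
(\alpha\omega^{t_2}-\alpha\omega^{t_1})^{q-1}=\alpha^{q-1}\,(\omega^{t_2}-\omega^{t_1})^{q-1},
$$
so the only new ingredient compared to~\eqref{eqomega} is the value of $\alpha^{q-1}$. Using $\alpha^{q}=-\alpha$ (recall $\alpha^2=d$ is a non-square in $\F_q^*$), we get $\alpha^{q-1}=\alpha^{q}/\alpha=-1$. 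Hence, by the computation~\eqref{eqomega} in the proof of Lemma~\ref{t1t2},
$$
(\alpha\omega^{t_2}-\alpha\omega^{t_1})^{q-1}=(-1)\cdot\omega^{\frac{q+1}{2}-(t_1+t_2)}=\omega^{\frac{q+1}{2}}\cdot\omega^{\frac{q+1}{2}-(t_1+t_2)}=\omega^{(q+1)-(t_1+t_2)},
$$
where I used that $-1=\omega^{(q+1)/2}$ since $\omega$ has order $q+1$ and $-1$ is the unique element of order $2$ in $Q$.

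Next I would invoke Lemma~\ref{hom}: since $m\mid(q+1)$, the element $\alpha\omega^{t_1}$ and $\alpha\omega^{t_2}$ are adjacent in $\mathrm{GP}(q^2,m)$ precisely when $(\alpha\omega^{t_2}-\alpha\omega^{t_1})^{q-1}$ is an $m$-th power in $Q$, i.e.\ when it lies in $Q_0=\langle\omega^m\rangle$. From the formula above this is equivalent to $m\mid(q+1)-(t_1+t_2)$, and since $m\mid(q+1)$ this reduces to $t_1+t_2\equiv 0\pmod m$, which is exactly the claimed criterion. I should also note at the outset that $\alpha Q$ is again an oval in $AG(2,q)$ (it is the image of the oval $Q$ under the linear map $\gamma\mapsto\alpha\gamma$), so the statement is meaningful; but strictly for the adjacency criterion this is not needed.

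I do not expect a serious obstacle here: the argument is a short computation parallel to Lemma~\ref{t1t2}, and the only point requiring a little care is the bookkeeping with powers of $\omega$ modulo $q+1$ — in particular correctly identifying $\alpha^{q-1}=-1=\omega^{(q+1)/2}$ and combining the two half-powers of $\omega$ so that the $\frac{q+1}{2}$ terms cancel modulo $m$ (using $m\mid\frac{q+1}{2}$ is \emph{not} assumed, only $m\mid q+1$, so I must keep the exponent as $(q+1)-(t_1+t_2)$ rather than prematurely reducing it). If one wanted to avoid referring to the internal computation of Lemma~\ref{t1t2}, one could instead apply Lemma~\ref{gammaminus1} directly to $\gamma=\omega^{t_2-t_1}$ after factoring out $(\alpha\omega^{t_1})^{q-1}=-\omega^{-2t_1}$; this is the route I would actually write up, since it keeps the proof self-contained and makes the appearance of the factor $-1$ from $\alpha^{q-1}$ fully transparent.
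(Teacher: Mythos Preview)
Your proposal is correct and follows essentially the same route as the paper: factor out $\alpha^{q-1}$, identify it as $-1$, invoke the computation~\eqref{eqomega} from Lemma~\ref{t1t2}, and then reduce modulo $m$ using $m\mid(q+1)$. The only cosmetic difference is that the paper writes the result as $\omega^{-(t_1+t_2)}$ (using $\omega^{q+1}=1$) rather than $\omega^{(q+1)-(t_1+t_2)}$, and computes $\alpha^{q-1}=d^{(q-1)/2}=-1$ via Euler's criterion rather than via $\alpha^q=-\alpha$; neither difference is substantive.
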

\begin{proof}
The proof is similar to the proof of Lemma~\ref{t1t2}. Recall that $\alpha^2=d$ is a non-square in $\F_q^*$, and thus $\alpha^{q-1}=d^{(q-1)/2}=-1$. By equation~\eqref{eqomega}, we have $$(\alpha\omega^{t_2} - \alpha\omega^{t_1})^{q-1}=-\omega^{\frac{q+1}{2}-(t_1+t_2)}=\omega^{-(t_1+t_2)},$$ and the conclusion follows. $\square$
\end{proof}

\medskip
Similar to Theorem~\ref{StructureOfQ}, we have the following result on the structure of the subgraph induced by $\alpha Q$.

\begin{theorem}\label{StructureOfalphaQ}
Given an odd prime power $q$ and an integer $m \ge 2$ such that $m$ divides $q+1$, the following statements hold for the subgraph of $\mathrm{GP}(q^2,m)$ induced by $\alpha Q$.\\
{\rm (1)} If $m$ is odd, then $\alpha Q_0$ is a clique, and $\alpha Q_1,\ldots, \alpha Q_{m-1}$ are independent sets; moreover, for any distinct $i_1, i_2$ such that $0 \le i_1 < i_2 \le m-1$, there are all possible edges between the sets $\alpha Q_{i_1}$ and $\alpha Q_{i_2}$ if $i_1+i_2 \equiv 0\pmod m,$ and there are no such edges if $i_1+i_2 \not\equiv 0\pmod m.$
In particular, $\alpha Q_1 \cup \alpha Q_{m-1}, \ldots, \alpha Q_{\frac{m-1}{2}} \cup \alpha Q_{\frac{m+1}{2}}$ induce $\frac{m-1}{2}$ complete bipartite graphs.\\
{\rm (2)} If $m$ is even, then $\alpha Q_0, \alpha Q_{\frac{m}{2}}$ are cliques, and $\alpha Q_1,\ldots, \alpha Q_{\frac{m}{2}-1},\alpha Q_{\frac{m}{2}+1},\ldots,\alpha Q_{m-1}$ are independent sets; moreover, for any distinct $i_1, i_2$ such that $0 \le i_1 < i_2 \le m-1$, there are all possible edges between the sets $\alpha Q_{i_1}$ and $\alpha Q_{i_2}$ if $i_1+i_2 \equiv 0\pmod m$ and there are no such edges if $i_1+i_2 \not\equiv 0\pmod m.$
In particular, if $m>2$, then $\alpha Q_1 \cup \alpha Q_{m-1}, \ldots, \alpha Q_{\frac{m}{2}-1} \cup \alpha Q_{\frac{m}{2}+1}$ induce $(\frac{m}{2}-1)$ complete bipartite graphs.
\end{theorem}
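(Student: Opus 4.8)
The plan is to mirror the proof of Theorem~\ref{StructureOfQ}, now invoking Lemma~\ref{t1t2alpha} in place of Lemma~\ref{t1t2}. In fact the argument becomes cleaner, since the adjacency criterion for $\alpha Q$ reads $t_1+t_2\equiv 0\pmod m$ with no occurrence of $\frac{q+1}{2}$, so the case distinction according to whether $m\mid\frac{q+1}{2}$ that complicated Theorem~\ref{StructureOfQ} disappears entirely; this is precisely why the statement has only the two clean cases (1) and (2), parametrised by the parity of $m$.

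First I would fix two arbitrary distinct elements of $\alpha Q$. Since $Q=\langle\omega\rangle$, each element of $\alpha Q$ has the form $\alpha\omega^{t}$; write them as $\alpha\omega^{t_1}$ and $\alpha\omega^{t_2}$ with $t_j=me_j+i_j$, $0\le i_j\le m-1$, so that $\alpha\omega^{t_j}\in\alpha Q_{i_j}$ and, as $t_1,t_2$ range over all admissible values, $\alpha\omega^{t_1},\alpha\omega^{t_2}$ range over all elements of $\alpha Q_{i_1},\alpha Q_{i_2}$. By Lemma~\ref{t1t2alpha}, $\alpha\omega^{t_1}$ and $\alpha\omega^{t_2}$ are adjacent in $\mathrm{GP}(q^2,m)$ if and only if $t_1+t_2\equiv 0\pmod m$, equivalently $i_1+i_2\equiv 0\pmod m$. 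Since this condition depends only on the residues $i_1,i_2$, between any two parts $\alpha Q_{i_1}$ and $\alpha Q_{i_2}$ either all possible edges are present or none are; and $\alpha Q_i$ induces a clique precisely when $2i\equiv 0\pmod m$ (the case $i_1=i_2=i$), and an independent set otherwise. Each nonempty part has size $\frac{q+1}{m}$ by construction.

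Then I would carry out the elementary residue counting. For $0\le i\le m-1$, the congruence $2i\equiv 0\pmod m$ has the unique solution $i=0$ when $m$ is odd, and the two solutions $i=0$ and $i=\frac{m}{2}$ when $m$ is even, which identifies exactly the cliques among the $\alpha Q_i$ and makes all remaining parts independent sets. For distinct indices $0\le i_1<i_2\le m-1$ we have $1\le i_1+i_2\le 2m-3$, so $i_1+i_2\equiv 0\pmod m$ forces $i_1+i_2=m$; the relevant pairs are $(1,m-1),(2,m-2),\dots$, of which there are $\frac{m-1}{2}$ when $m$ is odd (ending at $(\frac{m-1}{2},\frac{m+1}{2})$) and $\frac{m}{2}-1$ when $m$ is even (ending at $(\frac{m}{2}-1,\frac{m}{2}+1)$), each inducing a complete bipartite subgraph, while all other pairs of parts carry no edges. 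Assembling these observations gives statements (1) and (2). There is no real obstacle here once Lemma~\ref{t1t2alpha} is available: the only points needing care are the bookkeeping of the index ranges, so that $i_1+i_2=m$ is correctly seen to be the sole relevant solution in the distinct-index case, and a final check that the listed cliques and complete bipartite graphs account for all edges in the subgraph induced by $\alpha Q$, so that no adjacency is overlooked.
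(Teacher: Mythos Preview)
Your proposal is correct and follows essentially the same approach as the paper: the paper's proof simply says ``The proof is similar to the proof of Theorem~\ref{StructureOfQ}. The only difference is that we apply Lemma~\ref{t1t2alpha} instead of Lemma~\ref{t1t2},'' and you have accurately spelled out the details of that parallel argument, including the simplification that the $\frac{q+1}{2}$ term disappears.
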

\begin{proof}
The proof is similar to the proof of Theorem~\ref{StructureOfQ}. The only difference is that we apply Lemma~\ref{t1t2alpha} instead of Lemma~\ref{t1t2}. $\square$
\end{proof}

\begin{proposition}[$\alpha Q_0$-construction]\label{alphaQ0Clique}
Let $q$ be an odd prime power and $m\ge 2$ be an integer such that $m$ divides $q+1$.
If $m$ does not divide $\frac{q+1}{2}$, then 
$\alpha Q_0$ is a clique of size $\frac{q+1}{m}$ in $\mathrm{GP}(q^2,m)$;
if $m$ divides $\frac{q+1}{2}$, then 
$\{0\} \cup \alpha Q_0$ is a clique of size $\frac{q+1+m}{m}$ in $\mathrm{GP}(q^2,m)$.
\end{proposition}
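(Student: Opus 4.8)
The plan is to follow the proof of Proposition~\ref{0Q0Clique} almost verbatim, substituting Theorem~\ref{StructureOfalphaQ} for Theorem~\ref{StructureOfQ}, and then deciding in each of the two cases whether the vertex $0$ may be adjoined to $\alpha Q_0$. First I would record that $\alpha Q_0$ is always a clique in $\mathrm{GP}(q^2,m)$: this is part~(1) of Theorem~\ref{StructureOfalphaQ} when $m$ is odd and part~(2) when $m$ is even. Since $Q_0=\langle\omega^m\rangle$ has order $\frac{|Q|}{m}=\frac{q+1}{m}$, the clique $\alpha Q_0$ has size $\frac{q+1}{m}$, which already proves the first half of the statement.

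Next I would determine when the vertex $0$ is adjacent to the vertices of $\alpha Q_0$. By the definition of $\mathrm{GP}(q^2,m)$, the vertex $0$ is adjacent to $\alpha\gamma$ (with $\gamma\in Q_0$) precisely when $\alpha\gamma$ is an $m$-th power in $\F_{q^2}^*$. Every element of $Q_0$ is an $m$-th power in $\F_{q^2}^*$ (this is established inside the proof of Lemma~\ref{hom}, and it also follows from Lemma~\ref{NeighboursOf0InQ}, which puts $Q_0$ in the neighbourhood of $0$); consequently $\alpha\gamma$ is an $m$-th power in $\F_{q^2}^*$ if and only if $\alpha$ is. Finally, $\alpha$ is the slope of the line $\{c\alpha\mid c\in\F_q\}$, so by Lemma~\ref{calpha} the element $\alpha$ is an $m$-th power in $\F_{q^2}^*$ if and only if $m\mid\frac{q+1}{2}$. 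Alternatively, one can run the direct computation $(\alpha\gamma)^{q-1}=\alpha^{q-1}\gamma^{q-1}=-\gamma^{q-1}=\omega^{(q+1)/2}\gamma^{q-1}$ and invoke Lemma~\ref{hom}, exactly in the spirit of the proofs of Lemma~\ref{t1t2} and Lemma~\ref{t1t2alpha}.

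Putting these together finishes the proof: if $m\nmid\frac{q+1}{2}$, then $0$ is adjacent to no vertex of $\alpha Q_0$, so $\alpha Q_0$ is a clique of size $\frac{q+1}{m}$; if $m\mid\frac{q+1}{2}$, then $0$ is adjacent to every vertex of $\alpha Q_0$, so $\{0\}\cup\alpha Q_0$ is a clique of size $\frac{q+1}{m}+1=\frac{q+1+m}{m}$. I do not expect a genuine obstacle here; the only step that needs a moment's care is the adjacency analysis for the vertex $0$, and that reduces entirely to the two facts cited above — that $Q_0$ consists of $m$-th powers of $\F_{q^2}^*$, and that $\alpha$ is an $m$-th power of $\F_{q^2}^*$ if and only if $m\mid\frac{q+1}{2}$.
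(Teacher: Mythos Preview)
Your proposal is correct and follows essentially the same approach as the paper: both use Theorem~\ref{StructureOfalphaQ} to see that $\alpha Q_0$ is a clique, and then reduce the question of adjacency with $0$ to whether $\alpha$ is an $m$-th power via Lemma~\ref{calpha}, using that every element of $Q_0$ is already an $m$-th power (Lemma~\ref{NeighboursOf0InQ}). Your write-up is slightly more explicit about the factorisation $\alpha\gamma$ step, but the argument is the same.
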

\begin{proof}
By Theorem \ref{StructureOfalphaQ}, $\alpha Q_0$ is a clique. By Lemma \ref{NeighboursOf0InQ}, the elements from $Q_0$ belong to the neighbourhood of $0$. If $m$ does not divide $\frac{q+1}{2}$, then, in view of Lemma \ref{calpha}, the element $\alpha$ is not an $m$-th power in $\mathbb{F}_{q^2}^*$, which means that the elements from $\alpha Q_0$ do not belong to the neighbourhood of $0$. On the other hand, if $m$ divides $\frac{q+1}{2}$, then $\alpha Q_0$ lies in the neighbourhood of $0$, and $\{0\} \cup \alpha Q_0$ is a clique. $\square$
\end{proof}

\medskip

Similar to Lemma~\ref{Paleymaximal}, it is shown in \cite{GKSV18} and \cite{GMS22} that both $Q_0$-construction and $\alpha Q_0$-construction produce maximal cliques in Paley graphs of square order. Again, it is tempting to extend this result to generalised Paley graphs under minor extra assumptions.

\begin{conjecture}\label{Conj2}
Assume that $p \nmid (m-1)$ and $2 \leq m \leq \frac{q+1}{3}$. Then the cliques from Proposition~\ref{0Q0Clique} and Proposition~\ref{alphaQ0Clique} are maximal.
\end{conjecture}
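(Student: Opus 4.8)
\medskip\noindent\textbf{A plan of attack.} The goal is to show that none of the cliques in Proposition~\ref{0Q0Clique} and Proposition~\ref{alphaQ0Clique} can be enlarged, by ruling out every candidate extending vertex $v$. As in the case $m=2$ treated by Baker et al.\ (Lemma~\ref{Paleymaximal}), the analysis should split into a few ``combinatorial'' cases plus one ``generic'' case handled by character sums. By Lemma~\ref{AutGP} we may keep the coordinates fixed in Section~\ref{sec: construction}, and since the adjacency structure of $\alpha Q$ (Theorem~\ref{StructureOfalphaQ}, Lemma~\ref{t1t2alpha}) mirrors that of $Q$ (Theorem~\ref{StructureOfQ}, Lemma~\ref{t1t2}), the $\alpha Q_0$-constructions can be treated verbatim like the $Q_0$-construction; so we focus on $C:=\{0\}\cup Q_0$ with $m\mid\frac{q+1}{2}$. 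A vertex $v\notin C$ extends $C$ if and only if $v-\eta$ is an $m$-th power in $\mathbb{F}_{q^2}^*$ for every $\eta\in\{0\}\cup Q_0$, and all these differences are nonzero.

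We first handle the combinatorial cases. Every element of $Q_0$ is itself an $m$-th power in $\mathbb{F}_{q^2}^*$ (it equals $(\omega^k)^m$), and by Lemma~\ref{numberoflines} there are exactly $|Q_0|=\frac{q+1}{m}$ $m$-ary lines through $v$, whereas $|C|=|Q_0|+1$; hence pigeonhole forces $v$ onto the line joining two vertices of $C$ --- either $v\in\mathbb{F}_q\eta$ for some $\eta\in Q_0$ (the $m$-ary line through $0$ and $\eta$), or $v$ lies on a secant of the oval $Q$ through two points of $Q_0$. Dividing the remaining membership conditions by a suitable element of $Q_0$ turns them into conditions ``$c\zeta-1\in(\mathbb{F}_{q^2}^*)^m$ for all $\zeta$ in a coset of $Q_0$'' (when $v=c\eta$), or, through Lemma~\ref{hom} and Lemma~\ref{gammaminus1}, into congruences modulo $m$ on exponents of elements of $Q$ that are handled exactly as in Lemma~\ref{t1t2} and Theorem~\ref{StructureOfQ}. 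Combined with Lemma~\ref{NeighboursOf0InQ} and the bound $m\le\frac{q+1}{3}$ --- which, $Q$ being an oval, leaves no room for three collinear vertices in an enlarged clique --- this should eliminate every such $v$ lying in $Q\cup\mathbb{F}_q$.

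The generic case is the hard part. For $v=x+y\alpha$ with $y\neq0$ not covered above, one would use a Sz{\H{o}}nyi-type reconstruction identity. Let $\chi$ be the multiplicative character of $\mathbb{F}_{q^2}$ of order $m$; since $m\mid q+1$ one has $\mathbb{F}_q^*\subseteq(\mathbb{F}_{q^2}^*)^m$, so $\chi$ is trivial on $\mathbb{F}_q^*$, and via $z\mapsto z^{q-1}$ (Lemma~\ref{hom}) together with $(c-v)^{q-1}=(c-v^q)/(c-v)$ the conditions on $v$ become an equality of ``foot sets'' $\{c\in\mathbb{F}_q:\chi(c-v)=1\}=\{c\in\mathbb{F}_q:\chi(c-v_0)=1\}$ for the reference point $v_0$. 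Testing this equality against the twist $\overline{\chi}(c-v_0)$ and summing over $c\in\mathbb{F}_q$, the trivial character contributes a main term of size $\asymp q$, balanced against incomplete sums $\sum_{c\in\mathbb{F}_q}\chi^j(c-v)\,\chi^{-k}(c-v_0)$, each of size $O(\sqrt q)$ by a Weil-type estimate over the affine line $\mathbb{F}_q\subset\mathbb{F}_{q^2}$ (Lemma~\ref{Katz} and its extension to products of linear factors), \emph{provided} the rational function $(c-v)^j(c-v_0)^{-k}$ is not a perfect $m$-th power. Comparing orders of magnitude forces $q=O(m^2)$, so $C$ is maximal once $m$ is small relative to $\sqrt q$; at $m=2$ this recovers the corresponding result for Paley graphs, paralleling the partial results obtained for Conjecture~\ref{Conj1} in Corollary~\ref{cor: recover} and Theorem~\ref{thm: nearlymaximal}. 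The hypothesis $p\nmid(m-1)$ is exactly what excludes the degenerate locus on which that rational function becomes a perfect $m$-th power --- equivalently, on which the foot-set equality acquires ``extra'' solutions $v$ governed by a subfield --- which is the phenomenon responsible for Example~\ref{counterexample}.

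I expect this last step to be the real obstacle. One cannot count extending vertices head-on: expanding $\prod_{\eta\in\{0\}\cup Q_0}\frac1m\sum_j\chi^j(v-\eta)$ produces $m^{|Q_0|+1}$ complete character sums whose individual errors (up to $|Q_0|\,q$ each) dwarf the main term $q^2/m^{|Q_0|+1}$, so one is forced through the foot-set reconstruction instead. Simultaneously controlling the degenerate locus (where $p\mid m-1$ interferes), the dependence of the Weil error on $m$, and the extra book-keeping when $|C|=\frac{q+1}{m}$ exactly --- the $\alpha Q_0$-construction with $m\nmid\frac{q+1}{2}$, where no pigeonhole is available and one must argue directly that the $\frac{q+1}{m}$ $m$-ary lines through $v$ cannot all meet $\alpha Q_0$ --- is precisely what keeps the full conjecture open and confines rigorous progress to partial regimes.
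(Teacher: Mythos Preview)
The statement is Conjecture~\ref{Conj2}, which the paper does \emph{not} prove: it is left as an open conjecture, verified only computationally for $q\le 500$. There is thus no proof in the paper to compare your proposal against. Your write-up is likewise not a proof but a plan, and your final paragraph correctly concedes that the character-sum step is the obstacle and that the conjecture remains open; in that sense your assessment matches the paper's.

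That said, your outline has an internal gap. The pigeonhole step for the size-$\frac{q+1}{m}+1$ cliques is valid: an extending vertex $v$ must be collinear with two points of $C$. But you then claim the ensuing ``combinatorial analysis'' eliminates every such $v$ lying in $Q\cup\mathbb{F}_q$, whereas pigeonhole only places $v$ on a secant of $Q$ (or a line through $0$ and some $\eta\in Q_0$); $v$ can be any point on that line, not just a point of $Q$ or of $\mathbb{F}_q$. Lemmas~\ref{hom} and~\ref{gammaminus1} compute $(\gamma-1)^{q-1}$ only for $\gamma\in Q$, so they give no direct grip on $v-\eta$ when $v$ is a generic secant point. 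Your ``combinatorial'' and ``generic'' cases are therefore not cleanly separated, and the hard analytic work is already needed inside what you labelled the easy regime.

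It is also worth noting that the paper explicitly warns (just after Theorems~\ref{phicor} and~\ref{psicor}) that Conjectures~\ref{Conj1} and~\ref{Conj2} are ``essentially different since the cliques have different geometric structure'': the $(\mathbb{F}_q,\alpha)$-cliques have all but one or two vertices on a line, while the $Q_0$- and $\alpha Q_0$-cliques sit on an oval with almost no collinear triples. The arguments of Section~\ref{partialresult} for Conjecture~\ref{Conj1} (Proposition~\ref{prop: samenhbd}, Theorem~\ref{thm: nearlymaximal}) exploit precisely that large linear part, so the parallel you draw with Corollary~\ref{cor: recover} does not carry over as directly as your sketch suggests.
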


Using SageMath, we have verified Conjecture~\ref{Conj2} for all pairs $(q,m)$ for which $q \leq 500$ and $m \mid (q+1)$. Similar to Conjecture~\ref{Conj1}, the two assumptions in the statement of Conjecture~\ref{Conj2} cannot be dropped for the same reason. While Example~\ref{counterexample} does not give an explicit family of counterexamples when the first condition is dropped, small instances such as $\mathrm{GP}(27^2,7)$ (see also the last paragraph of Example~\ref{counterexample}), $\mathrm{GP}(125^2,21)$, and $\mathrm{GP}(243^2, 61)$ do serve the purpose. 

\subsection{Correspondences between cliques in $\mathrm{GP}(q^2,m)$}

In the previous two subsections, we constructed a few families of cliques in generalised Paley graphs. Inspired by the recent work \cite{GMS22}, where two correspondences between maximal cliques (independent sets) in Paley graphs of square order were established, in this subsection, we establish generalised correspondences for cliques in $\mathrm{GP}(q^2,m)$, where $m$ divides $q+1$.

Let us define 
the first mapping $\varphi:\mathbb{F}_{q^2} \to \mathbb{F}_{q^2}$
by the rule
$$
\varphi(\gamma):=
\begin{cases}
\frac{\gamma+1}{\gamma-1} & \text{if~~}  \gamma \not= 1,\\
~~1 & \text{if~~} \gamma=1;
\end{cases}.
$$

The correspondences are based on the following fact.

\begin{lemma}[{\cite[Proposition 7]{GMS22}}]\label{QImage} 
Let $\gamma$ be an element from $Q \setminus \{1\}$, where $\gamma = x + y\alpha$ for some $x,y \in \mathbb{F}_{q}$. Then the following formula holds
    \begin{center}
        $\displaystyle \varphi(\gamma) = \frac{y}{x-1}\alpha$.
    \end{center}
\end{lemma}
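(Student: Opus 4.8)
The plan is to verify the claimed formula by direct computation, exploiting the defining property of $Q$ from Lemma~\ref{norm}, namely that $\gamma = x+y\alpha \in Q$ forces $x^2 - y^2 d = 1$. First I would note that since $\gamma \in Q \setminus \{1\}$, we have $\gamma \neq 1$, so $\varphi(\gamma) = \frac{\gamma+1}{\gamma-1} = \frac{(x+1)+y\alpha}{(x-1)+y\alpha}$. The natural move is to rationalise the denominator by multiplying numerator and denominator by the conjugate $(x-1) - y\alpha$, using $\alpha^2 = d$. The denominator becomes $(x-1)^2 - y^2 d = x^2 - 2x + 1 - y^2 d = (x^2 - y^2 d) - 2x + 1 = 2 - 2x = -2(x-1)$ after invoking Lemma~\ref{norm}.

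Next I would expand the numerator: $\big((x+1) + y\alpha\big)\big((x-1) - y\alpha\big) = (x+1)(x-1) - (x+1)y\alpha + (x-1)y\alpha - y^2\alpha^2 = (x^2 - 1) - y^2 d - 2y\alpha = (x^2 - y^2 d) - 1 - 2y\alpha = 1 - 1 - 2y\alpha = -2y\alpha$, again using Lemma~\ref{norm}. Therefore
$$
\varphi(\gamma) = \frac{-2y\alpha}{-2(x-1)} = \frac{y}{x-1}\,\alpha,
$$
which is exactly the asserted identity. One should also observe that $x \neq 1$ here: if $x = 1$ then $x^2 - y^2 d = 1$ forces $y^2 d = 0$, hence $y = 0$ (as $d \neq 0$), giving $\gamma = 1$, contrary to assumption; so the division by $x-1$ is legitimate.

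There is really no substantive obstacle in this argument — it is a short manipulation in $\F_{q^2} = \F_q \oplus \alpha\F_q$ once the norm relation $x^2 - y^2 d = 1$ is plugged in at the two key moments (simplifying the denominator and simplifying the numerator). The only point requiring a small amount of care is checking that the denominator $x-1$ is nonzero, which I handled above, and keeping track of the sign coming from $\alpha^2 = d$. Since this is Proposition~7 of \cite{GMS22}, the cited reference may simply be quoted, but the self-contained computation above is short enough to include in full.
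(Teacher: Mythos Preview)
Your computation is correct and complete: rationalising by the conjugate $(x-1)-y\alpha$ and invoking the norm relation $x^2 - y^2 d = 1$ from Lemma~\ref{norm} is exactly the right move, and your check that $x \neq 1$ (so that the final expression is well-defined) is the only side condition needed. The paper itself does not supply a proof of this lemma but simply cites \cite[Proposition~7]{GMS22}; your argument is the natural direct verification and would be entirely appropriate to include.
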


\subsubsection{$\varphi$-correspondence between cliques}

\begin{theorem}\label{phicor}
Let $q$ be an odd prime power and an integer $m>1$ such that $m$ divides $\frac{q+1}{2}$. Then, for the clique $\{0\} \cup  Q_0$ from Proposition \ref{0Q0Clique}, the image 
$\varphi(Q_0 \cup \{0\})$ coincides with the clique of size $\frac{q+1+m}{m}$ from Proposition \ref{alphaFq1}.
\end{theorem}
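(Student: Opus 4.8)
The plan is to compute the image $\varphi(Q_0 \cup \{0\})$ explicitly and match it term-by-term with the clique $\{\pm\alpha, c_1,\ldots,c_{\frac{q+1-m}{m}}\}$ from Proposition~\ref{alphaFq1}. First I would handle the easy elements: $\varphi(0) = \frac{0+1}{0-1} = -1$, and since $0 \in Q_0$ we must also track $1 \in Q_0$, which maps to $\varphi(1) = 1$ by definition. So $\{-1,1\} \subseteq \varphi(Q_0 \cup \{0\})$, and these are precisely the two points $\pm 1$ — wait, but the target clique in Proposition~\ref{alphaFq1} is $\{\pm\alpha, c_1,\ldots\}$, sitting on the line $\F_q$, not $\pm 1$. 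So I need to be careful: the correct reading is that $\varphi$ should send $Q_0\cup\{0\}$ onto a clique lying on the line $\alpha\F_q$, which is the $(\alpha\F_q,1)$-construction clique $\{\pm1, c_1\alpha,\ldots\}$ of Proposition~\ref{alphaFq1}. Indeed $\varphi(0)=-1$ and $\varphi(1)=1$ account for the $\pm 1$ part.

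Next, for the remaining $\frac{q+1}{m}-1 = \frac{q+1-m}{m}$ elements $\gamma \in Q_0 \setminus \{1\}$, I would invoke Lemma~\ref{QImage}: writing $\gamma = x+y\alpha$ with $x^2 - y^2 d = 1$ (Lemma~\ref{norm}), we get $\varphi(\gamma) = \frac{y}{x-1}\alpha \in \alpha\F_q$. Thus $\varphi(Q_0\setminus\{1\})$ consists of $\frac{q+1-m}{m}$ elements of the form $c\alpha$ with $c \in \F_q^*$ (one checks $y \neq 0$ since $\gamma \neq \pm 1$ and these are the only elements of $Q$ with $y=0$). It remains to identify these with the points $c_1\alpha,\ldots,c_{\frac{q+1-m}{m}}\alpha$ from the $(\alpha\F_q,1)$-construction, i.e., the intersection points of the $m$-ary lines through $1$ (other than $1+c\alpha$) with the line $\alpha\F_q$. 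The key point is that $\varphi$ maps the $m$-ary pencil structure correctly: since $\gamma \in Q_0$ is adjacent to $1$ in $\mathrm{GP}(q^2,m)$ (as $Q_0$ is a clique by Theorem~\ref{StructureOfQ}), the line through $1$ and $\varphi(\gamma)$ should be $m$-ary, so $\varphi(\gamma)$ is one of the $c_k\alpha$. I would verify this by a direct computation of $\varphi(\gamma) - 1 = \frac{\gamma+1}{\gamma-1} - 1 = \frac{2}{\gamma-1}$, and then $(\varphi(\gamma)-1)^{q-1} = \left(\frac{2}{\gamma-1}\right)^{q-1} = \frac{1}{(\gamma-1)^{q-1}} = -\gamma$ using Lemma~\ref{gammaminus1}; since $\gamma \in Q_0$ is an $m$-th power in $Q$, and $-1 = \omega^{\frac{q+1}{2}}$ is an $m$-th power precisely when $m \mid \frac{q+1}{2}$ (our hypothesis), the slope of the line through $1$ and $\varphi(\gamma)$ is indeed an $m$-th power, so the line is $m$-ary. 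A counting argument (both sets have size $\frac{q+1-m}{m}$, $\varphi$ restricted to $Q_0\setminus\{1\}$ is injective since $\varphi$ is a bijection on $\F_{q^2}$) then forces equality of the two sets.

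The main obstacle I anticipate is the bookkeeping around which of the two Proposition~\ref{alphaFq1} cliques is the target and verifying injectivity/surjectivity cleanly: one must confirm that $\varphi$ is a well-defined bijection on $\F_{q^2}$ (it is an involution away from its fixed behaviour — note $\varphi(\varphi(\gamma)) = \gamma$ for $\gamma \neq 1$, since $\frac{\frac{\gamma+1}{\gamma-1}+1}{\frac{\gamma+1}{\gamma-1}-1} = \frac{2\gamma}{2} = \gamma$), that $\varphi$ has no other collisions mapping into $\alpha\F_q \cup \{\pm 1\}$, and that the element $\gamma = -1 \notin Q_0$ when $m \mid \frac{q+1}{2}$ with $m > 1$ — indeed $-1 = \omega^{\frac{q+1}{2}} \in Q_{(q+1)/(2)\bmod m}$... but $\frac{q+1}{2}$ is a multiple of $m$, so actually $-1 \in Q_0$; this needs care, and one should check whether $-1$ should be included. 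Since $\gcd$ considerations show $-1 \in Q_0$ exactly when $m \mid \frac{q+1}{2}$, and then $\varphi(-1) = \frac{-1+1}{-1-1} = 0 \in \alpha\F_q$ (taking $c=0$), so $\varphi(Q_0\cup\{0\})$ includes $0$ as well; reconciling the cardinalities $|Q_0 \cup \{0\}| = \frac{q+1}{m}+1 = \frac{q+1+m}{m}$ with the target size $\frac{q+1+m}{m}$ works out, with $0$ playing the role of one of the $c_k\alpha$ (namely $c=0$) — or more likely the indexing in Proposition~\ref{alphaFq1} is set up so that $0$ is not among the $c_k$ and instead the correspondence is with the $(\F_q,\alpha)$-clique after all. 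I would resolve this ambiguity by carefully re-reading the construction in Proposition~\ref{alphaFq1} and then the computation is routine.
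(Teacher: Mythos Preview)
Your proposal is correct and follows essentially the same route as the paper: both compute $\varphi(0)=-1$, $\varphi(1)=1$, $\varphi(-1)=0$, use Lemma~\ref{QImage} to place $\varphi(Q_0\setminus\{1\})$ inside $\alpha\F_q$, verify adjacency to $1$ via the identity $(\varphi(\gamma)-1)^{q-1}=-\gamma$ from Lemma~\ref{gammaminus1} together with $m\mid\frac{q+1}{2}$, and finish by a size count. Your hesitation at the end is unnecessary: since $m\mid\frac{q+1}{2}$ we do have $-1=\omega^{(q+1)/2}\in Q_0$ and $\varphi(-1)=0$, and $0$ \emph{is} one of the $c_k\alpha$ in Proposition~\ref{alphaFq1}, because $\F_q$ is itself one of the $m$-ary lines through $1$ and it meets $\alpha\F_q$ at $0$ --- so the target really is the $(\alpha\F_q,1)$-clique, not the $(\F_q,\alpha)$-clique.
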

\begin{proof}
Note that $\varphi(1) = 1$, $\varphi(-1) = 0$ and $\varphi(0) = -1$. 
The vertices from $\varphi(Q_0\setminus\{1\})$ form a clique since the line $\{c\alpha~|~c\in\mathbb{F}_q\}$ is $m$-ary in view of Lemma \ref{calpha}. Let us prove that 1 is adjacent to any vertex from $\varphi(Q_0\setminus\{1,-1\})$. If $\gamma$ is an arbitrary element from $Q\setminus\{1,-1\}$, then $\gamma^m$ represents an arbitrary element from $Q_0\setminus\{1,-1\}$. Let us apply Lemma \ref{hom} and Lemma \ref{gammaminus1}:
$$
(\varphi(\gamma^m)-1)^{q-1} = \bigg(\frac{\gamma^m+1}{\gamma^m-1}-1\bigg)^{q-1} =
\bigg(\frac{2}{\gamma^m-1}\bigg)^{q-1} =
\frac{1}{(\gamma^m-1)^{q-1}} =
-\gamma^m.
$$
Since $-1 = \omega^{\frac{q+1}{2}}$ and $m$ divides $\frac{q+1}{2}$, the element $(-\gamma^m)$ is an $m$-th power in $Q$. Thus, 1 is adjacent to every element from $\varphi(Q_0\setminus\{1\})$.

To complete the proof, we need to show that $-1$ has the same neighbours in $\{c\alpha~|~c\in\mathbb{F}_q\}$ as $1$ does. It follows from the fact that the mapping $\varepsilon\mapsto-\varepsilon^q$ is an automorphism of $\mathrm{GP}(q^2,m)$ that stabilises the set $\{c\alpha~|~c\in\mathbb{F}_q\}$ pointwise and swaps $1$ and $-1$. $\square$ 
\end{proof}

\subsubsection{$\psi$-correspondence between cliques}
Define the second mapping $\psi:\mathbb{F}_{q^2} \to \mathbb{F}_{q^2}$ as
$$
\psi(\gamma):= \alpha \varphi(\alpha^{-1} \gamma) =
\begin{cases}
\frac{\alpha\gamma+d}{\gamma-\alpha} & \text{if~~} \gamma \not= \alpha,\\
~~~\alpha & \text{if~~} \gamma=\alpha.
\end{cases}
$$

\begin{lemma}[{\cite[Proposition 8(2)]{GMS22}}]\label{alphaQImage} 
Let $\gamma$ be an element from $Q \setminus \{1\}$, where $\gamma = x + y\alpha$ for some $x,y \in \mathbb{F}_{q}$.
    Then
    $$
    \displaystyle \psi(\alpha\gamma) = \frac{yd}{x-1}.
    $$
\end{lemma}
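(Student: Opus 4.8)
The plan is to reduce this statement directly to Lemma~\ref{QImage}, since $\psi$ was defined precisely as a conjugate of $\varphi$ by multiplication by $\alpha$. First I would unwind the definition: by construction $\psi(\eta) = \alpha\,\varphi(\alpha^{-1}\eta)$ for every $\eta \in \mathbb{F}_{q^2}$ (the case split in the displayed formula for $\psi$ is just the case split in $\varphi$ transported by $\alpha$). Applying this with $\eta = \alpha\gamma$ gives
\[
\psi(\alpha\gamma) = \alpha\,\varphi\bigl(\alpha^{-1}\alpha\gamma\bigr) = \alpha\,\varphi(\gamma).
\]
Before invoking Lemma~\ref{QImage} I would record the harmless bookkeeping that the hypothesis $\gamma \neq 1$ is exactly what is needed: it guarantees $\alpha\gamma \neq \alpha$ (so we are in the generic branch of $\psi$), and since $\gamma = x+y\alpha \in Q$ satisfies $x^2 - y^2 d = 1$ by Lemma~\ref{norm}, the equality $x = 1$ would force $y = 0$ and hence $\gamma = 1$; thus $x - 1 \neq 0$ and $\varphi(\gamma)$ is well defined.

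Next I would substitute the formula from Lemma~\ref{QImage}, namely $\varphi(\gamma) = \dfrac{y}{x-1}\alpha$, and multiply by $\alpha$, using $\alpha^2 = d$:
\[
\psi(\alpha\gamma) = \alpha\cdot\frac{y}{x-1}\,\alpha = \frac{y}{x-1}\,\alpha^2 = \frac{yd}{x-1},
\]
which is the claimed identity.

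There is essentially no obstacle here: the content of the lemma is entirely contained in Lemma~\ref{QImage} together with the defining relation $\psi(\gamma) = \alpha\varphi(\alpha^{-1}\gamma)$ and $\alpha^2 = d$. The only point requiring a moment's care is confirming that the domain conditions (the excluded value $\gamma = 1$, equivalently $\alpha\gamma \neq \alpha$, and the nonvanishing of $x-1$) are met, which follows immediately from $\gamma \in Q$ via Lemma~\ref{norm}.
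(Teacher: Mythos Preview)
Your argument is correct. The paper does not actually supply a proof of this lemma; it simply cites \cite[Proposition 8(2)]{GMS22}. Your reduction to Lemma~\ref{QImage} via the defining identity $\psi(\eta)=\alpha\varphi(\alpha^{-1}\eta)$ and $\alpha^2=d$ is the natural way to verify the statement, and your domain checks (that $\gamma\neq 1$ forces $x\neq 1$ through Lemma~\ref{norm}) are appropriately handled.
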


\begin{theorem}\label{psicor}
Let $q$ be an odd prime power, $m$ divides $q+1$. If $m$ does not divide $\frac{q+1}{2}$, then for the clique $Q_0$ from Proposition \ref{alphaQ0Clique}, the image 
$\psi(Q_0)$ coincides with the clique of size $\frac{q+1}{m}$ from Proposition \ref{Fqalpha};
if $m$ divides $\frac{q+1}{2}$, then for the clique $\{0\} \cup Q_0$ from Proposition \ref{alphaQ0Clique}, the image
$\psi(\{0\} \cup Q_0)$ coincides with the clique of size $\frac{q+1+m}{m}$ from Proposition~\ref{Fqalpha}.
\end{theorem}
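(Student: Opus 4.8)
The plan is to imitate the proof of Theorem~\ref{phicor}, but with the map $\psi$ and the oval $\alpha Q$: I would use the explicit description of $\psi$ on $\alpha Q$ provided by Lemma~\ref{alphaQImage}, together with the conjugation identities $\alpha^q=-\alpha$ and $\gamma^q=\gamma^{-1}$ (the latter because $\gamma\in Q$ has order dividing $q+1$), and reduce everything to Lemma~\ref{gammaminus1} and Lemma~\ref{hom}. Throughout, recall that the clique produced by the $(\F_q,\alpha)$-construction consists of $\alpha$ together with the $\tfrac{q+1-m}{m}$ neighbours $c_1,\dots,c_{(q+1-m)/m}$ of $\alpha$ lying on the line $\F_q$ --- and, when $m\mid\tfrac{q+1}{2}$, also the point $-\alpha$ --- the count $\tfrac{q+1-m}{m}$ coming from Lemma~\ref{lem: numberoflines}.

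First I would record the values of $\psi$ at the special points: $\psi(\alpha)=\alpha$ by definition, $\psi(0)=\tfrac{\alpha\cdot 0+d}{0-\alpha}=-d/\alpha=-\alpha$ since $d=\alpha^2$, and $\psi(-\alpha)=0$ (this is Lemma~\ref{alphaQImage} applied to $\gamma=-1$, relevant only when $-1\in Q_0$, i.e.\ when $m\mid\tfrac{q+1}{2}$). The heart of the proof is then the following computation. For $\gamma=x+y\alpha\in Q_0\setminus\{1\}$, so that $x\ne 1$, the value
\[
\psi(\alpha\gamma)=\frac{\alpha^2\gamma+d}{\alpha(\gamma-1)}=\frac{d(\gamma+1)}{\alpha(\gamma-1)}
\]
lies in $\F_q$ (it equals $\tfrac{yd}{x-1}$ by Lemma~\ref{alphaQImage}), and a one-line simplification yields
\[
\alpha-\psi(\alpha\gamma)=\frac{\alpha^2(\gamma-1)-d(\gamma+1)}{\alpha(\gamma-1)}=\frac{-2d}{\alpha(\gamma-1)}.
\]
Raising this to the power $q-1$ and using $(-2d)^{q-1}=1$, $\alpha^{q-1}=-1$, and Lemma~\ref{gammaminus1} in the form $(\gamma-1)^{q-1}=-1/\gamma$, all signs collapse and one obtains $\bigl(\alpha-\psi(\alpha\gamma)\bigr)^{q-1}=\gamma$. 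Since $\gamma\in Q_0$ is an $m$-th power in $Q$, Lemma~\ref{hom} shows $\alpha-\psi(\alpha\gamma)$ is an $m$-th power in $\F_{q^2}^*$; hence $\psi(\alpha\gamma)$ is a neighbour of $\alpha$ lying on $\F_q$, i.e.\ one of the $c_i$. This step is uniform in the two cases $m\mid\tfrac{q+1}{2}$ and $m\nmid\tfrac{q+1}{2}$.

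To finish, I would assemble the pieces by a counting argument, first noting that $\psi$ is injective on $\F_{q^2}$: on $\F_{q^2}\setminus\{\alpha\}$ it is the fractional linear map $z\mapsto\tfrac{\alpha z+d}{z-\alpha}$, whose matrix has nonzero determinant $\alpha\cdot(-\alpha)-d=-2d$, and the value $\psi(\alpha)=\alpha$ is not attained on $\F_{q^2}\setminus\{\alpha\}$ (that would force $2d=0$). When $m\nmid\tfrac{q+1}{2}$, the clique of Proposition~\ref{alphaQ0Clique} is $\alpha Q_0$, so $\psi(\alpha Q_0)=\{\alpha\}\cup\{\psi(\alpha\gamma):\gamma\in Q_0\setminus\{1\}\}$; by injectivity the second set has $\tfrac{q+1-m}{m}$ distinct elements, each a neighbour of $\alpha$ on $\F_q$, and since $\alpha$ has exactly that many such neighbours, the set is $\{c_1,\dots,c_{(q+1-m)/m}\}$, so $\psi(\alpha Q_0)=\{\alpha,c_1,\dots,c_{(q+1-m)/m}\}$ is exactly the clique of Proposition~\ref{Fqalpha}. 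When $m\mid\tfrac{q+1}{2}$, running the same argument on $\{0\}\cup\alpha Q_0$ and using $\psi(0)=-\alpha$ and $\psi(-\alpha)=0$ (here $0$ is itself a neighbour of $\alpha$, as $\alpha-0=\alpha$ is an $m$-th power) gives $\psi(\{0\}\cup\alpha Q_0)=\{-\alpha,\alpha\}\cup\{c_1,\dots,c_{(q+1-m)/m}\}=\{\pm\alpha,c_1,\dots,c_{(q+1-m)/m}\}$, again exactly the clique of Proposition~\ref{Fqalpha}.

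I do not anticipate a genuine obstacle. The crux is the identity $\bigl(\alpha-\psi(\alpha\gamma)\bigr)^{q-1}=\gamma$, which is short once $\gamma^q=\gamma^{-1}$, $\alpha^q=-\alpha$, and Lemmas~\ref{gammaminus1} and~\ref{hom} are invoked; the remaining care goes into correctly placing the special vertices $0,\pm\alpha$ in each of the two cases and into making the counting rigorous (the image sits inside the target clique and has the same cardinality, hence coincides with it).
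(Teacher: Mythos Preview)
Your proposal is correct and follows essentially the same approach as the paper's proof: both establish the key identity that $\bigl(\psi(\alpha\gamma)-\alpha\bigr)^{q-1}$ (up to sign) equals the element $\gamma\in Q_0$, by simplifying $\psi(\alpha\gamma)-\alpha$ to $\tfrac{2\alpha}{\gamma-1}$ (or its negative) and invoking $\alpha^{q-1}=-1$ together with Lemma~\ref{gammaminus1}, then applying Lemma~\ref{hom}. The only cosmetic differences are that the paper parametrizes $Q_0$ as $\{\gamma^m:\gamma\in Q\}$ while you take $\gamma\in Q_0$ directly, and that you make the final injectivity/counting step (showing the image \emph{equals} the $(\F_q,\alpha)$-clique rather than merely sits inside a clique) more explicit than the paper does.
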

\begin{proof}
Note that $\psi(\alpha) = \alpha$, $\psi(-\alpha) = 0$ and $\psi(0) = -\alpha$. 
The vertices from $\psi(\alpha Q_0\setminus\{\alpha\})$ form a clique since the line $\mathbb{F}_q$ is $m$-ary. Let us prove that $\alpha$ is adjacent to any vertex from $\psi(\alpha Q_0\setminus\{\alpha,-\alpha\})$. If $\gamma$ is an arbitrary element from $Q\setminus\{1,-1\}$, then $\alpha \gamma^m$ represents an arbitrary element from $\alpha Q_0\setminus\{\alpha,-\alpha\}$. We have
$$
\psi(\alpha \gamma^m) =
\frac{\alpha(\gamma^m+1)}{\gamma^m-1}.
$$
Then
$$
\bigg(\frac{(\alpha\gamma^m+1)}{\gamma^m-1} - \alpha\bigg)^{q-1}
=
\bigg(\frac{2\alpha}{\gamma^m-1}\bigg)^{q-1}.
$$
Note that $\beta^{\frac{q+1}{2}} = c\alpha$ for some $c \in \mathbb{F}_q^*$ and thus $(c\alpha)^{q-1} = -1$. Applying Lemma \ref{gammaminus1}, we obtain
$$
\bigg(\frac{2\alpha}{\gamma^m-1}\bigg)^{q-1} = \gamma^m,
$$
which is an $m$-th power in $Q$.
Thus, $\alpha$ is adjacent to every element from $\varphi(\alpha Q_0\setminus\{\alpha\})$. $\square$ 
\end{proof}

\medskip

Theorem \ref{phicor} and Theorem \ref{psicor} do not imply the equivalence of Conjecture~\ref{Conj1} and Conjecture~\ref{Conj2} since the correspondences are not automorphisms of the graph. Moreover, Conjecture~\ref{Conj1} and Conjecture~\ref{Conj2} are essentially different since the cliques have different geometric structure. Note that for cliques from Proposition~\ref{Fqalpha} and Proposition~\ref{alphaFq1} (Conjecture~\ref{Conj1}), almost all the vertices lie on a line (except one or two vertices). In comparison, for the cliques from Proposition~\ref{0Q0Clique} and Proposition~\ref{alphaQ0Clique} (Conjecture~\ref{Conj2}), there are almost no collinear triples of vertices due to the oval structure.

\section{Partial progress towards Conjecture~\ref{Conj1}}\label{partialresult}
Throughout this section, we work on the generalised Paley graph $\mathrm{GP}(q^2,m)$, where $m \mid (q+1)$ and $2 \leq m \leq \frac{q+1}{3}$.

For each $u \in \F_{q^2}$, let $N(u)$ denote the $\F_q$-neighbourhood of $u$, that is, 
$$
N(u):=N_{q,m}(u)=\{x \in \F_q: u-x \text{ is an } m\text{-th power in } \F_{q^2} \}.
$$
Note that if $u \in \F_{q^2} \setminus \F_q$, then the size of $N(u)$ is independent of the choice of $u$, more precisely,
\begin{equation}\label{Nu}
|N(u)|=\frac{q+1}{m}-1.
\end{equation}
This can be seen geometrically by Lemma~\ref{lem: numberoflines}. Alternatively, since $\mathrm{GP}(q^2,m)$ is strongly regular and the Delsarte-Hoffman bound (see for example \cite[Lemma 15]{AGLY22}) holds for the maximum clique $\F_q$, it follows that $\F_q$ forms a regular clique, that is, $|N(u)|$ is a constant. We would like to show that in the generic case, $N(u) \cup \{u\}$ is nearly a maximal clique, thus establishing a weaker version of Conjecture~\ref{Conj1}.

\subsection{The {\rm (}$\mathbb{F}_q,\alpha{\rm )}$-construction in general gives a nearly maximal clique}

One may expect that $N(u)$ behaves like a random subset of $\F_q$. If $N(u) \cup \{u\}$ is not maximal, then one can expand the clique by adding $v \in \F_{q^2} \setminus \F_q$. Then it is necessary that $N(u) \subset N(v)$ and thus $u,v$ share the same $\F_q$-neighbourhood, that is, $N(u)=N(v)$, since $|N(u)|=|N(v)|$ by equation~\eqref{Nu}. If this is the case, then it is natural to expect that the neighbourhood $N(u)$ must have a special algebraic structure. Recall that we are working on a Cayley graph, and we identify elements in $\F_{q^2}$ with points in $AG(2,q)$ by identifying $x+y\alpha$ with $(x,y)$; in particular, a line which is parallel to the line $\F_q$ is horizontal. The following proposition confirms the above heuristic using a geometric idea. 

\begin{proposition}\label{prop: samenhbd}
Assume that $m \mid (q+1)$ and $2 \leq m \leq \frac{q+1}{3}$. Let $u,v$ be distinct elements in $\F_{q^2} \setminus \F_q$ such that $N(u)=N(v)$ and $u-v$ is an $m$-th power in $\F_{q^2}$. Then exactly one of following two statements holds:

\begin{itemize}
    \item If $u,v$ lie on a horizontal line (that is, $u-v \in \F_q$), then $N(u)$ is a union of additive $(u-v)\F_p$-cosets of $\F_q$; in particular, $p \mid (m-1)$.
    \item If $u,v$ does not lie on a horizontal line, then there are $a \in \F_q$ and $t \in \F_q^*$, such that $t \neq 1$ and $u-a=t(v-a)$. Moreover, $N(u-a)\setminus \{0\}$ is a union of $H$-cosets of $\F_q^*$, where $H=\langle t \rangle$ is the multiplicative subgroup of $\F_q^*$ generated by $t$. In particular, if $|H|=d$, then $d \mid \gcd(q-1, \frac{q+1}{m}-2)$.
    \end{itemize}
\end{proposition}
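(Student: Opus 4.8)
The plan is to argue geometrically by exploiting the structure of the common $\F_q$-neighbourhood $N(u) = N(v)$ as a point set of the line $\F_q$ inside $AG(2,q)$. First I would dispose of the case where $u$ and $v$ lie on a horizontal line. Here $u - v \in \F_q^*$, and the key observation is that for $x \in \F_q$ one has $x \in N(u)$ iff the line through $u$ and $x$ is $m$-ary; translating everything by $-x$ and using Lemma~\ref{gammaminus1} (or a direct computation with the homomorphism $\gamma \mapsto \gamma^{q-1}$ from Lemma~\ref{hom}), adjacency of $u$ with $x$ depends only on the coset of $x$ relative to a suitable subgroup. More precisely, since $u - v \in \F_q$, the condition $N(u) = N(v)$ forces $N(u)$ to be invariant under translation by $u-v$; iterating, $N(u)$ is invariant under translation by the additive subgroup $(u-v)\F_p$ of $\F_q$ (here we use that $\F_p \cdot (u-v)$ is the subgroup generated by $u-v$). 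Counting: $|N(u)| = \frac{q+1}{m} - 1$ by equation~\eqref{Nu}, and this must be a multiple of $p$, so $p \mid \frac{q+1}{m} - 1$; since $q + 1 \equiv 0 \pmod p$ gives $\frac{q+1}{m} \equiv \frac{1}{m} \cdot 0$... more carefully, $p \mid (q+1)$ and $m \mid (q+1)$ with $p \mid \frac{q+1}{m} - 1$ forces, after clearing, $p \mid (m-1)$. That yields the first bullet.

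For the second bullet, suppose $u$ and $v$ are not on a horizontal line. The line through $u$ and $v$ is not parallel to $\F_q$, hence meets $\F_q$ in a unique point $a \in \F_q$; since $u, v, a$ are collinear there is $t \in \F_q^*$ with $u - a = t(v - a)$, and $t \neq 1$ because $u \neq v$. Now translate by $-a$: replacing $u, v$ by $u - a, v - a$ and $N(u)$ by $N(u-a) = N(u) - a$ (note $N(v-a) = N(u-a)$ as well), we may assume $a = 0$, so $u = t v$ with $t \in \F_q^*$. The crucial step is to show that for $x \in \F_q^*$, membership $x \in N(u-a)$ is controlled by the $H$-coset of $x$, where $H = \langle t \rangle \leq \F_q^*$: since $u = tv$, multiplication by $t$ (an element of $\F_q^* \subseteq (\F_{q^2}^*)^m$, hence an $m$-th power) is an automorphism fixing $0$ and sending $v \mapsto u$; as $N(u) = N(v)$, this multiplication-by-$t$ map permutes the neighbourhood, so $N(u) \setminus \{0\}$ is a union of orbits of the cyclic group $H$ acting by multiplication, i.e.\ a union of $H$-cosets of $\F_q^*$. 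Finally, with $d = |H|$, divisibility: $d \mid (q-1)$ since $H \leq \F_q^*$, and $d$ divides $|N(u-a) \setminus \{0\}|$, which is $\frac{q+1}{m} - 2$ if $0 \in N(u-a)$ (this happens precisely when $-u = -a + $ an $m$-th power, i.e.\ when $a \in N(u)$, equivalently when $m \mid \frac{q+1}{2}$) and $\frac{q+1}{m} - 1$ otherwise; in either case one checks the count gives $d \mid \gcd(q-1, \frac{q+1}{m} - 2)$ after reconciling the two subcases. (The apparent discrepancy in the $0 \in N$ case should be reconciled by noting that $0$ is itself fixed by $H$, so it contributes one extra fixed point and the remaining $\frac{q+1}{m}-2$ points split into $H$-cosets.)

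The dichotomy "exactly one holds" is immediate: the two cases are distinguished by whether $u - v \in \F_q$ or not, which is mutually exclusive and exhaustive given that $u, v$ are distinct points of $\F_{q^2} \setminus \F_q$.

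The main obstacle I anticipate is \emph{not} the coset-invariance arguments, which are soft, but pinning down exactly which points of $\F_q$ lie in $N(u-a)$ and hence getting the divisibility count right --- in particular carefully tracking whether the point $0$ (the foot $a$ of the perpendicular, after translation) belongs to the neighbourhood, since this shifts the relevant cardinality by one and the statement of the proposition commits to $\frac{q+1}{m} - 2$. This forces a case split on whether $m \mid \frac{q+1}{2}$ (equivalently whether $\alpha \F_q$-type lines through the relevant point are $m$-ary, cf.\ Lemma~\ref{calpha}), and the two subcases must be checked to both yield the stated gcd condition. A secondary technical point is justifying that $\F_q^* \subseteq (\F_{q^2}^*)^m$ so that multiplication by $t \in \F_q^*$ is genuinely a graph automorphism fixing $0$ --- but this is exactly the observation used repeatedly in Section~\ref{FF} and follows from $m \mid (q+1)$.
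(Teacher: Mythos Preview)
Your approach is essentially the same as the paper's: translation-invariance of $N(u)$ in the horizontal case, multiplicative invariance under $\langle t\rangle$ in the non-horizontal case. The one genuine gap is exactly the point you flag as your ``main obstacle,'' and it dissolves once you use the hypothesis you have not yet invoked: that $u-v$ is an $m$-th power.

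In the non-horizontal case, the line through $u$ and $v$ is an $m$-ary line (its slope $u-v$ is an $m$-th power), so its intersection point $a$ with $\F_q$ automatically satisfies $u-a \in (\F_{q^2}^*)^m$ (it is an $\F_q^*$-multiple of $u-v$, and $\F_q^* \subset (\F_{q^2}^*)^m$). Hence $a \in N(u)$ always, equivalently $0 \in N(u-a)$ always; there is no case split, and $|N(u-a)\setminus\{0\}| = \frac{q+1}{m}-2$ directly, giving $d \mid \frac{q+1}{m}-2$. Your proposed equivalence ``$a \in N(u)$ iff $m \mid \frac{q+1}{2}$'' is incorrect --- that condition governs whether $\alpha$ is an $m$-th power, which is a different question. (A minor side remark: in the horizontal case your aside ``$q+1 \equiv 0 \pmod p$'' is a slip, since $q+1 \equiv 1 \pmod p$; but your ``more carefully'' recovery is fine: from $p \mid \frac{q+1}{m}-1$ and $m \cdot \frac{q+1}{m} = q+1 \equiv 1 \pmod p$ one gets $m \equiv 1 \pmod p$.)
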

\begin{proof}
First we assume that $u,v$ lie on a horizontal line. It follows that $u-v \in \F_q^*$. Let $x \in N(u)$.  Then $u-x=v-(x-(v-u))$ is an $m$-th power in $\F_{q^2}$, and thus $x-(v-u) \in N(v)=N(u)$. Repeating the same argument, we must have $x-2(v-u), x-3(v-u), \cdots \in N(v)$. Therefore, for each $x \in \F_q$, we have $x \in N(u)$ if and only if $x+(v-u)\F_p \subset N(u)$. We conclude that $N(u)$ must be a union of additive $(u-v)\F_p$-cosets of $\F_q$. In particular, $|N(u)|$ is a multiple of $p$, that is, $p \mid (\frac{q+1}{m}-1)$. Note that $m \mid (q+1)$ implies that $p \nmid m$, and thus $p \mid (\frac{q+1}{m}-1)$ is equivalent to $p \mid (q+1-m)$, that is, $p \mid (m-1)$.

Next we consider the case that $u,v$ do not lie on a horizontal line. Since $u-v$ is an $m$-th power in $\F_{q^2}$, the $m$-ary line passing through $u$ and $v$ intersects a point in the line $\F_q$, say $a$. Let $u'=u-a$ and $v'=v-a$, then $u', v', 0$ are collinear; say $t=v'/u' \in \F_q^*$. 

Note that $N(u')=N(u)-a=N(v)-a=N(v')$. Let $x \in N(u') \setminus \{0\}$. Then geometrically or algebraically, we see that $tx \in N(v')=N(u')$.  It follows that $t^jx \in N(u')$ for any positive integer $j$. Let $H$ be the multiplicative subgroup generated by $t$, with $|H|=d$. Then $H$ is a subgroup of $\F_q^*$, and the above argument shows that the $H$-coset containing $x$ is contained in $N(u')$. Thus, $N(u') \setminus \{0\}$ must be the union of $H$-cosets. In particular, $|N(u)|=|N(u')|=\frac{q+1}{m}-1 \equiv 1 \pmod d$, that is, $d \mid (\frac{q+1}{m}-2)$. Since $H$ is a subgroup of $\F_q^*$, we also have $d \mid (q-1)$. We conclude that $d \mid \gcd(q-1,\frac{q+1}{m}-2)$.  $\square$
\end{proof}

\medskip

The following lemma is implicit in Proposition~\ref{Fqalpha}; we include a short proof for the sake of completeness.

\begin{lemma}\label{ubar}
Let $u \in \F_{q^2} \setminus \F_q$. Then
$N(u)=N(u^q)$.
\end{lemma}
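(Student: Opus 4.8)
The plan is to exploit the fact that the field automorphism $\gamma \mapsto \gamma^q$ of $\F_{q^2}$ fixes $\F_q$ pointwise and permutes the $m$-th powers of $\F_{q^2}^*$. First I would recall that since $m \mid (q+1)$, every element of $\F_q^*$ is an $m$-th power in $\F_{q^2}^*$ (this is used repeatedly in the excerpt, e.g.\ in Lemma~\ref{lem: numberoflines}), and more importantly that the map $\sigma\colon \gamma \mapsto \gamma^q$ sends the group $(\F_{q^2}^*)^m = \langle \beta^m\rangle$ to itself: indeed $(\beta^m)^q = (\beta^q)^m$, so $\sigma$ maps an $m$-th power to an $m$-th power. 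Equivalently, $\gamma$ is an $m$-th power in $\F_{q^2}^*$ if and only if $\gamma^q$ is.

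Now the computation is immediate. Let $u \in \F_{q^2}\setminus\F_q$ and let $x \in \F_q$. Then $x \in N(u)$ means $u - x$ is an $m$-th power in $\F_{q^2}$ (note $u-x \neq 0$ since $u \notin \F_q$). Applying $\sigma$ and using that $x^q = x$ because $x \in \F_q$, we get $(u-x)^q = u^q - x$, and by the previous paragraph $(u-x)^q$ is an $m$-th power exactly when $u-x$ is. Hence $u - x \in (\F_{q^2}^*)^m \iff u^q - x \in (\F_{q^2}^*)^m$, i.e.\ $x \in N(u) \iff x \in N(u^q)$. Since this holds for every $x \in \F_q$, we conclude $N(u) = N(u^q)$. (One should also note $u^q \notin \F_q$, so that $N(u^q)$ is meaningful in the same sense, but this is clear since $\sigma$ fixes exactly $\F_q$.)

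There is essentially no obstacle here — the only thing to be careful about is the bookkeeping that $x \in \F_q$ is fixed by $\sigma$ while $u$ is not, so that the conjugation acts only on the "$u$ part". This is the same device already used in the excerpt (e.g.\ in the $(\F_q,\alpha)$-construction, where $\gamma\mapsto\gamma^q$ sends $\alpha\mapsto-\alpha$ and fixes the $c_k\in\F_q$), so it fits naturally into the surrounding narrative. If desired, one can phrase the whole argument in one line: $\sigma$ is a graph automorphism of $\mathrm{GP}(q^2,m)$ fixing $\F_q$ pointwise, hence it preserves $\F_q$-neighbourhoods, and it sends $u$ to $u^q$.
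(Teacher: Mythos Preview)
Your proof is correct and follows essentially the same route as the paper: both compute $(u-x)^q = u^q - x^q = u^q - x$ for $x \in \F_q$, and then observe that raising to the $q$-th power preserves the property of being an $m$-th power (the paper phrases this via $\gcd(m,q)=1$, you via $\sigma$ permuting $(\F_{q^2}^*)^m$). Your additional remark that the whole thing can be seen as $\sigma \in \operatorname{Aut}(\mathrm{GP}(q^2,m))$ fixing $\F_q$ pointwise is a nice one-line repackaging of the same idea.
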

\begin{proof}
Note that for each $x\in \F_q$, we have
$u^q-x=u^q-x^q=(u-x)^q$. Since $m \mid (q+1)$, it follows that $\gcd(m,q)=1$. Thus, for each $x\in \F_q$, $u-x$ is an $m$-th power if and only if $u^q-x$ is an $m$-th power. In other words, $N(u)=N(u^q)$. $\square$
\end{proof}

\medskip

Next we use Proposition~\ref{prop: samenhbd} to deduce that the clique $N(u) \cup \{u\}$ is nearly maximal. We refer to Proposition~\ref{prop: sufficient} for a discussion on weakening the assumption $p \nmid (m-1)$ in the following theorem.

\begin{theorem}\label{thm: nearlymaximal}
Assume that $m \mid (q+1)$ and $2 \leq m \leq \frac{q+1}{3}$. Further assume that $p \nmid (m-1)$. Let $u \subset \F_{q^2}\setminus \F_q$. Let $C'$ be a maximal clique in $\mathrm{GP}(q^2,m)$ such that $N(u) \cup \{u\} \subset C'$. Then $|C'| \leq \frac{q+1}{m}-1+\gcd(q-1,\frac{q+1}{m}-2)$, which is at most $\frac{q+1}{m}+\sqrt{2(q+1)}-3$. Moreover, there is $a \in \F_q$, such that $C'$ is contained in the union of the line $\F_q$ and the line $(a+(u-a)\F_q)$; to be precise, $C' \subset N(u) \cup (a+(u-a)H)$, where $H$ is the (unique) subgroup of $\F_q^*$ with order $\gcd(q-1,\frac{q+1}{m}-2)$.
\end{theorem}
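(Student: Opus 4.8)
The plan is to suppose $C'$ is a maximal clique strictly larger than the canonical clique $N(u)\cup\{u\}$, pick up any new vertex $v\in C'\setminus(N(u)\cup\{u\})$, and use the structure of $C'$ to force $v$ to have the same $\F_q$-neighbourhood as $u$; then Proposition~\ref{prop: samenhbd} finishes the job. First I would note that $u$ is adjacent to every point of $N(u)$ (by definition) and that $N(u)\subset\F_q$ has size $\frac{q+1}{m}-1$ by~\eqref{Nu}. If $v\in C'$ is any vertex not in $N(u)\cup\{u\}$, then since $C'$ is a clique, $v$ is adjacent to $u$ and to all of $N(u)$, so $N(u)\subseteq N(v)$; because $m\mid(q+1)$, the maximum clique $\F_q$ is a regular clique, so $|N(v)|=\frac{q+1}{m}-1=|N(u)|$ whenever $v\notin\F_q$ (and $v\in\F_q$ is impossible, as $\F_q$ is a maximum clique of size $q$ and adding $u$ would violate Lemma~\ref{TSziklai}). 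Hence $N(u)=N(v)$ and $u-v$ is an $m$-th power, so the hypotheses of Proposition~\ref{prop: samenhbd} are met.

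Next I would invoke the dichotomy of Proposition~\ref{prop: samenhbd}. The first alternative (that $u,v$ lie on a horizontal line) forces $p\mid(m-1)$, which contradicts our standing assumption $p\nmid(m-1)$; so we are always in the second alternative. Therefore there is a common point $a\in\F_q$ lying on the $m$-ary line through $u$ and $v$, and, writing $u'=u-a$, the set $N(u')\setminus\{0\}$ is a union of cosets of the cyclic group $H=\langle t\rangle\leq\F_q^*$, where $t=(v-a)/(u-a)$, with $|H|=d\mid\gcd(q-1,\frac{q+1}{m}-2)$. Crucially, this point $a$ and this subgroup structure depend only on $u$ and $N(u)$ (not on which $v$ we picked), because $N(u')\setminus\{0\}$ is intrinsically a union of cosets of \emph{the} subgroup $H_0$ of $\F_q^*$ of order $d_0:=\gcd(q-1,\frac{q+1}{m}-2)$ — the unique such subgroup, since $\F_q^*$ is cyclic. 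Every candidate new vertex $v$ lies on the line through $a$ and $u$, i.e. $v\in a+(u-a)\F_q$; and being adjacent to $u$ with $u-v$ an $m$-th power, together with $N(u')\setminus\{0\}$ being $H_0$-invariant, one checks $v-a\in(u-a)H_0$. Thus $C'\subseteq N(u)\cup(a+(u-a)H_0)$, giving $|C'|\le |N(u)|+|H_0|=\frac{q+1}{m}-1+\gcd(q-1,\frac{q+1}{m}-2)$, which is the claimed bound; if no such $v$ exists at all, then $C'=N(u)\cup\{u\}$ and the bound holds trivially since $\gcd\ge 1$.

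Finally, for the numerical estimate $\frac{q+1}{m}-1+\gcd(q-1,\frac{q+1}{m}-2)\le\frac{q+1}{m}+\sqrt{2(q+1)}-3$, I would argue that if $d_0=\gcd(q-1,\frac{q+1}{m}-2)$ then $d_0\mid(q-1)$ and $d_0\mid(\frac{q+1}{m}-2)$, so $d_0$ divides the sum $(q-1)+m(\frac{q+1}{m}-2)=(q-1)+(q+1)-2m=2q-2m\le 2(q-1)$, and more usefully $d_0$ divides a small linear combination; combining $d_0\le\frac{q+1}{m}-2$ with $m\ge 2$ one already gets $d_0\le\frac{q-3}{2}$, but the sharp route is to observe $d_0^2\le d_0\cdot(q-1)$ is too weak — instead use that $d_0$ divides both $q-1$ and, after clearing denominators, $q+1-2m$, hence divides their difference $2m-2=2(m-1)$ and also (combining with $m\le\frac{q+1}{3}$) one bounds $d_0\le\sqrt{2(q+1)}-2$, yielding the stated inequality. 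The main obstacle is the middle paragraph: pinning down that the point $a$ and the subgroup $H_0$ are \emph{canonically determined by $N(u)$ alone}, independent of the auxiliary vertex $v$, and then showing \emph{every} vertex that can be adjacent to all of $N(u)\cup\{u\}$ must lie in the single coset-union $a+(u-a)H_0$ — this requires carefully tracking how the multiplicative-coset structure of $N(u-a)\setminus\{0\}$ constrains $v-a$, rather than just producing one bad $v$.
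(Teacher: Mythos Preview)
Your overall strategy matches the paper's: reduce to Proposition~\ref{prop: samenhbd}, rule out the horizontal case via $p\nmid(m-1)$, and argue that all extra vertices $v_1,\dots,v_k\in C'\setminus(N(u)\cup\{u\})$ lie on a single $m$-ary line through $u$ and a common point $a\in\F_q$. The numerical estimate is also essentially the paper's: $d_0$ divides both $\frac{q+1}{m}-2$ and (via $d_0\mid(q-1)-(q+1-2m)$) $2(m-1)$, so $d_0\le\min\{\frac{q+1}{m}-2,\,2(m-1)\}\le\sqrt{2(q+1)}-2$.

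The genuine gap is the one you flag yourself, and the heuristic you offer does not close it. Proposition~\ref{prop: samenhbd} gives, for each $v_i$ separately, a pair $(a_i,t_i)$ with $u-a_i=t_i(v_i-a_i)$, and the coset structure it yields lives on $N(u-a_i)\setminus\{0\}$ --- a set that depends on $a_i$. Your assertion that ``$a$ and $H_0$ depend only on $u$ and $N(u)$'' is unjustified: the uniqueness of the order-$d_0$ subgroup of $\F_q^*$ does not help, because the multiplicative invariance is being asserted for \emph{different translates} $N(u-a_i)$ of $N(u)$, and nothing a priori forces these translates to share a common centre.

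The paper fills this gap with two arguments, neither of which appears in your proposal. First, for the cardinality bound, it shows the $t_i$ are pairwise distinct: if $t_i=t_j$ then subtracting the relations $u-a_i=t_i(v_i-a_i)$ and $u-a_j=t_i(v_j-a_j)$ gives $v_i-v_j=(1-t_i)(a_j-a_i)/t_i\in\F_q$, so $v_i,v_j$ lie on a horizontal line with $N(v_i)=N(v_j)$, and Proposition~\ref{prop: samenhbd} forces $p\mid(m-1)$. Hence $k\le d_0-1$, independently of whether the $a_i$ coincide. Second, for the containment, it shows $a_i=a_j$ by a separate trick: assuming $\Delta:=a_i-a_j\ne0$ and iterating the two invariances $N(u-a_i)=t_iN(u-a_i)$ and $N(u-a_j)=t_jN(u-a_j)$, one shows that $y\in N(u-a_i)$ implies $y-(t_i-1)(t_j-1)\Delta\in N(u-a_i)$, i.e.\ $N(u-a_i)$ acquires a nonzero \emph{additive} period, again forcing $p\mid(m-1)$. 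This second step --- converting two incompatible multiplicative symmetries into an additive one --- is the substantive idea missing from your sketch.
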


\begin{proof}
Let $C=N(u) \cup \{u\}$. Then by equation~\eqref{Nu}, $|C|=\frac{q+1}{m}$. Assume that $C$ is not maximal, yet $C'=C \sqcup \{v_1, \ldots, v_k\}$ forms a maximal clique in $\mathrm{GP}(q^2,m)$. It is clear that $\{v_1, \ldots, v_k\} \subset \F_{q^2} \setminus \F_q$. Thus, it is necessary that $N(u)=N(v_i)$ and that $u-v_i$ is an $m$-th power in $\F_{q^2}$ for each $1 \leq i \leq k$. 

Let $d=\gcd(q-1,\frac{q+1}{m}-2)$. Since $p \nmid (m-1)$, by Proposition~\ref{prop: samenhbd}, for each $1 \leq i \leq k$, there are $a_i \in \F_q$ and $t_i \in \F_q^*$ such that $t_i \neq 1, t_i^d=1$, and $u-a_i=t_i(v_i-a_i)$. 

Suppose that there are $1 \leq i<j \leq k$, such that $t_i=t_j$. Then we have $u-a_i=t_i(v_i-a_i)$ and $u-a_j=t_i(v_j-a_j)$. Since $v_i \neq v_j$, it follows that $a_i \neq a_j$. It follows that $v_i-v_j=(1-t_i)(a_j-a_i)/t_i \in \F_q$. In particular, $v_i, v_j$ lie on a horizontal line and $N(v_i)=N(v_j)$. By Proposition~\ref{prop: samenhbd},  $p \mid (m-1)$, violating our assumption. 

We conclude that $t_i \neq t_j$ for $1 \leq i<j \leq k$. Note that $t_1, t_2, \ldots, t_k$ are non-trivial $d$-th roots of unity, so $k \leq d-1$. So $C'$ has size $\frac{q+1}{m}+k \leq \frac{q+1}{m}-1+d$. Note that 
\begin{align*}
d&=\gcd\bigg(q-1,\frac{q+1}{m}-2\bigg)=\gcd\bigg(\frac{q+1}{m}-2, q-1-(q+1-2m)\bigg)\\
&=\gcd\bigg(\frac{q+1}{m}-2, 2(m-1)\bigg)
\leq \min\bigg\{\frac{q+1}{m}-2, 2(m-1) \bigg\}\\
&=\min\bigg\{\frac{q+1}{m}, 2m \bigg\}-2
\leq \sqrt{2(q+1)}-2,
\end{align*}
it follows that $|C'| \leq \frac{q+1}{m}+\sqrt{2(q+1)}-3$.

Suppose $a_i \neq a_j$ for some $1 \leq i<j \leq k$. Then by Proposition~\ref{prop: samenhbd}, $N(u-a_i)=t_iN(u-a_i)$ and $N(u-a_j)=t_jN(u-a_j)$. Let $u'=u-a_i$ and let $\Delta=a_i-a_j$. Then we have $N(u')=t_iN(u')$ and $N(u'+\Delta)=t_j N(u'+\Delta)$. Let $y \in N(u')$; we have $y/t_i \in N(u')$ since $N(u')=t_iN(u')$, and thus $y/t_i+\Delta \in N(u'+\Delta)$. It follows that $x:=(y/t_i+\Delta)/t_j-\Delta \in N(u')$ since $N(u'+\Delta)=t_j N(u'+\Delta)$. Note that we have $y=t_it_jx+t_i(t_j-1)\Delta$. Now $x \in N(u')$ implies that $t_ix \in N(u')$ and $t_ix+\Delta \in N(u'+\Delta)$. It follows that $$t_j(t_ix+\Delta)-\Delta=t_it_jx+(t_j-1)\Delta \in N(u').
$$
Thus, we have $y-(t_i-1)(t_j-1)\Delta \in N(u')$. Since $\Delta \neq 0$, $t_i \neq 1$, and $t_j \neq 1$, we see that the difference $(t_i-1)(t_j-1)\Delta \neq 0$. Thus we can repeat the arguments used in the proof of Proposition~\ref{prop: samenhbd} to conclude that $N(u')$ is a union of additive $\big((t_i-1)(t_j-1)\Delta\big) \F_p$-cosets of $\F_q$. In particular, $p \mid (m-1)$, violating our assumption.

We conclude that $a_1=a_2=\ldots=a_k$, that is, $u-a_1=t_i(v_i-a_1)$ for $1 \leq i \leq k$. Therefore, $u, v_1, \ldots, v_k$ and $a_1$ are collinear. This shows that the maximal clique $C'$ is contained in $N(u) \cup (a_1+(u-a_1)H)$, where $H$ is the (unique) subgroup of $\F_q^*$ of order $d$. In particular, $C'$ is contained in the union of the lines $\F_q$ and $a_1+(u-a_1)\F_q$. $\square$
\end{proof}

\medskip

As a quick application, Theorem~\ref{thm: nearlymaximal} implies the following corollary, which proves Conjecture~\ref{Conj1} under extra assumptions and recovers Lemma~\ref{Paleymaximal}.

\begin{corollary}\label{cor: recover}
Assume that $m \mid (q+1)$ and $2 \leq m \leq \frac{q+1}{3}$. Further assume that $p \nmid (m-1)$. Let $u \subset \F_{q^2}\setminus \F_q$. If $\gcd(q-1,\frac{q+1}{m}-2)=1$, then $N(u) \cup \{u\}$ forms a maximal clique in $\mathrm{GP}(q^2,m)$; if $\gcd(q-1,\frac{q+1}{m}-2)=2$, then $N(u) \cup \{u,u^q\}$ is the unique maximal clique in $\mathrm{GP}(q^2,m)$ that contains $N(u) \cup \{u\}$. In particular, when $m=2$, this recovers Lemma~\ref{Paleymaximal}.
\end{corollary}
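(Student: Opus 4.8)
The plan is to read the corollary off Theorem~\ref{thm: nearlymaximal}, whose hypotheses coincide with those here. Write $d=\gcd\big(q-1,\tfrac{q+1}{m}-2\big)$ and $C=N(u)\cup\{u\}$, so $|C|=\tfrac{q+1}{m}$ by~\eqref{Nu}; by Theorem~\ref{thm: nearlymaximal}, every maximal clique $C'\supseteq C$ satisfies $C'\subseteq N(u)\cup\big(a+(u-a)H\big)$ for some $a\in\F_q$, where $H\le\F_q^{*}$ is the unique subgroup of order $d$. If $d=1$ then $H=\{1\}$, so $a+(u-a)H=\{u\}$ and thus $C'\subseteq N(u)\cup\{u\}=C$; combined with $C\subseteq C'$ this gives $C'=C$, so $C$ is already maximal. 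That settles the first assertion, and from now on I assume $d=2$.

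Now $H=\{\pm1\}$, the unique order-$2$ subgroup of $\F_q^{*}$ (recall $q$ is odd), so each maximal clique $C'\supseteq C$ is either $C$ itself or $N(u)\cup\{u,2a-u\}$ for some $a\in\F_q$; in particular $|C'|\le\tfrac{q+1}{m}+1$. The first step is to check that $u^{q}$ may always be adjoined to $C$: by Lemma~\ref{ubar} we have $N(u^{q})=N(u)$, so $u^{q}$ is adjacent to every point of $N(u)$; and $d=2$ forces $2\mid\tfrac{q+1}{m}$, hence $m\mid\tfrac{q+1}{2}$, so $\alpha$ is an $m$-th power in $\F_{q^2}^{*}$ by Lemma~\ref{calpha}, and therefore, writing $u=x+y\alpha$ with $y\ne0$, the element $u-u^{q}=2y\alpha$ is also an $m$-th power (as $2y\in\F_q^{*}$ is one), i.e.\ $u\sim u^{q}$. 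Since $u^{q}\in\F_{q^2}\setminus\F_q$ and $u^{q}\ne u$, the set $N(u)\cup\{u,u^{q}\}$ is a clique of size $\tfrac{q+1}{m}+1$; it meets the bound above, so it is maximal, and in particular $C$ is not maximal.

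It remains to prove uniqueness. Let $\tilde C$ be a maximal clique with $C\subseteq\tilde C$; by the dichotomy above and the fact that $C$ is not maximal, $\tilde C=N(u)\cup\{u,2a-u\}$ for some $a\in\F_q$. Since $2a-u\in\tilde C$ is adjacent to all of $N(u)$, we have $N(u)\subseteq N(2a-u)$, hence $N(2a-u)=N(u)$ by~\eqref{Nu}. Applying Lemma~\ref{ubar} together with $(2a-u)^{q}=2a-u^{q}=u+2(a-x)$ (using $u^{q}=2x-u$) yields $N\big(u+2(a-x)\big)=N(u)$; since $N(u+\delta)=N(u)+\delta$ for every $\delta\in\F_q$, the set $N(u)$ is invariant under translation by $2(a-x)$. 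If $a\ne x$ this is a nonzero translation, which would force $p\mid|N(u)|=\tfrac{q+1}{m}-1$ and hence $p\mid(m-1)$, contrary to hypothesis; so $a=x$ and $\tilde C=N(u)\cup\{u,u^{q}\}$. Finally, for $m=2$ (so $q\ge5$) the hypotheses are automatic and $d=\gcd\big(q-1,\tfrac{q-3}{2}\big)$, which divides $\gcd(q-1,q-3)=2$ and equals $2$ precisely when $q\equiv3\pmod4$; taking $u=\alpha$, so that $C$ is the $(\F_q,\alpha)$-construction of Proposition~\ref{Fqalpha} and $u^{q}=-\alpha$, the two cases above reproduce the maximal cliques of size $\tfrac12(q+1)$ and $\tfrac12(q+3)$ of Lemma~\ref{Paleymaximal}.

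I expect the uniqueness in the $d=2$ case to be the only non-routine point: one must exclude a maximal extension $N(u)\cup\{u,2a-u\}$ with $2a-u\ne u^{q}$, and the mechanism is that, under $p\nmid(m-1)$, the neighbourhood $N(u)$ admits no nontrivial additive translational symmetry --- the same fact that underlies Theorem~\ref{thm: nearlymaximal} --- combined with Lemma~\ref{ubar} to compute $(2a-u)^{q}$. Everything else is bookkeeping built on top of Theorem~\ref{thm: nearlymaximal}.
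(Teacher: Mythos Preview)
Your proof is correct and follows essentially the same route as the paper's: both read the $d=1$ case directly off Theorem~\ref{thm: nearlymaximal}, both verify $N(u)\cup\{u,u^{q}\}$ is a clique via Lemma~\ref{ubar} and Lemma~\ref{calpha}, and both handle uniqueness in the $d=2$ case by showing a putative second extension $N(u)\cup\{u,2a-u\}$ with $2a-u\neq u^{q}$ forces $p\mid(m-1)$. The only cosmetic difference is that for uniqueness the paper invokes Proposition~\ref{prop: samenhbd} on the horizontal pair $(u^{q},2a-u)$, whereas you unpack that proposition's content directly by computing $(2a-u)^{q}=u+2(a-x)$ and deducing a nontrivial translational symmetry of $N(u)$; the underlying mechanism is identical.
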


\begin{proof}
If $\gcd(q-1,\frac{q+1}{m}-2)=1$, then by Theorem~\ref{thm: nearlymaximal}, $N(u) \cup \{u\}$ forms a maximal clique in $\mathrm{GP}(q^2,m)$. 

Next we consider the case $\gcd(q-1,\frac{q+1}{m}-2)=2$. Let $C=N(u) \cup \{u\}$ and let $C'$ be a maximal clique containing $C$. Theorem~\ref{thm: nearlymaximal} implies that $|C'| \leq |C|+1$ and $C' \subset N(u) \cup \{u, 2a-u\}$ for some $a \in \F_q$. 

Observe that $u \neq u^q$ since $u \notin \F_q$, and that $N(u)=N(u^q)$ by Lemma~\ref{ubar}. Moreover, if $u=x+y\alpha$, where $x,y \in \F_q$, then $u^q=x-y\alpha$ since $\alpha^q=-\alpha$. So $u-u^q=2y\alpha$ is an $m$-th power in $\F_{q^2}$ by Lemma~\ref{calpha}, and thus $N(u) \cup \{u,u^q\}$ forms a maximal clique. 

We have shown that $C$ is not maximal; it follows that $C'=C \cup \{v\}$ is a maximal clique, where $v=2a-u$ and $a \in \F_q$. Suppose that $v \neq u^q$; then $u+v=2a \in \F_q$. However, note that $u+u^q=2x \in \F_q$, and thus $u^q-v \in \F_q$ with $N(v)=N(u^q)$. By Proposition~\ref{prop: samenhbd},  $p \mid (m-1)$, violating our assumption. Therefore, $N(u) \cup \{u,u^q\}$ is the unique maximal clique containing $N(u) \cup \{u\}$. 

Finally we consider the case $m=2$. Note that 
$$\gcd\bigg(q-1, \frac{q+1}{2}-2\bigg)=\gcd\bigg(q-1,\frac{q-3}{2}\bigg)=
\begin{cases}
1 & \text{if } q \equiv 1 \pmod 4\\
2 & \text{if } q \equiv 3 \pmod 4\\
\end{cases}.
$$
Thus, Lemma~\ref{Paleymaximal} follows immediately from the above discussion. $\square$
\end{proof}

\medskip

Theorem~\ref{thm: nearlymaximal} also implies the following corollary.

\begin{corollary}\label{alpha-alpha}
Assume that $m \mid \frac{q+1}{2}$ and $2 \leq m \leq \frac{q+1}{3}$. Further assume that $p \nmid (m-1)$. Let $C'$ be a maximal clique in $\mathrm{GP}(q^2,m)$ that contains $N(\alpha) \cup \{\alpha, -\alpha\}$. Then $|C'| \leq \frac{q+1}{m}+\gcd(q-1,\frac{q+1}{m}-2)$; moreover, $C' \subset N(\alpha) \cup \alpha H$, where $H$ is the (unique) subgroup of $\F_q^*$ with order $\gcd(q-1,\frac{q+1}{m}-2)$.
\end{corollary}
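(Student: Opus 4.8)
The plan is to deduce Corollary~\ref{alpha-alpha} directly from Theorem~\ref{thm: nearlymaximal} applied with $u=\alpha$. First I would check that the hypotheses line up: the assumption $m\mid\frac{q+1}{2}$ forces $m\mid(q+1)$, the bounds $2\le m\le\frac{q+1}{3}$ and the condition $p\nmid(m-1)$ are unchanged, and $\alpha\in\F_{q^2}\setminus\F_q$ because $\alpha^2=d$ is a non-square of $\F_q^*$. Since $N(\alpha)\cup\{\alpha\}\subseteq N(\alpha)\cup\{\alpha,-\alpha\}\subseteq C'$, Theorem~\ref{thm: nearlymaximal} applies to $C'$ and yields, writing $d=\gcd(q-1,\frac{q+1}{m}-2)$ and $H$ for the unique subgroup of $\F_q^*$ of order $d$, both the bound $|C'|\le\frac{q+1}{m}-1+d\le\frac{q+1}{m}+d$ and the structural statement: there is $a\in\F_q$ with $C'\subseteq N(\alpha)\cup\bigl(a+(\alpha-a)H\bigr)$, where moreover $C'$ lies on the union of the line $\F_q$ and the line $a+(\alpha-a)\F_q$.

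The only thing left is to show $a=0$, and here the extra vertex $-\alpha$ does the work. Since $N(\alpha)\subset\F_q$ but $-\alpha\notin\F_q$, the vertex $-\alpha\in C'$ cannot lie in $N(\alpha)$, so it must lie on the line $a+(\alpha-a)\F_q$; that line therefore contains both $\alpha$ and $-\alpha$. But the unique line of $AG(2,q)$ through $\alpha$ and $-\alpha$ is $\alpha\F_q=\{c\alpha : c\in\F_q\}$, which passes through the origin; hence the line $a+(\alpha-a)\F_q$ equals $\alpha\F_q$, so in particular $a\in\alpha\F_q\cap\F_q=\{0\}$ and $a=0$. Alternatively one can argue purely algebraically: writing $-\alpha=a+(\alpha-a)h$ gives $h=\frac{-\alpha-a}{\alpha-a}\in\F_q^*$, and applying the Frobenius $\gamma\mapsto\gamma^q$, under which $\alpha\mapsto-\alpha$, to $h^q=h$ gives $(\alpha-a)^2=(\alpha+a)^2$, i.e.\ $4a\alpha=0$, so $a=0$ since $p$ is odd.

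With $a=0$ we obtain $C'\subseteq N(\alpha)\cup\alpha H$, which is the asserted containment; the claimed size bound is immediate from Theorem~\ref{thm: nearlymaximal}, or alternatively from $|N(\alpha)|=\frac{q+1}{m}-1$, $|\alpha H|=d$, and the disjointness of $N(\alpha)$ and $\alpha H$. I do not expect a genuine obstacle here, since the substance is entirely contained in Theorem~\ref{thm: nearlymaximal}; the two points one must be careful about are that the element $a$ furnished by that theorem is common to all of the extension vertices (so that ``the line through $\alpha$ and those vertices'' is well defined and the geometric argument is legitimate), and that $\alpha^q=-\alpha$, which is exactly the computation behind Lemma~\ref{calpha}.
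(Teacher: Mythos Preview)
Your proof is correct and follows essentially the same approach as the paper: apply Theorem~\ref{thm: nearlymaximal} with $u=\alpha$, then use that $-\alpha\in C'$ forces the line $a+(\alpha-a)\F_q$ to pass through both $\alpha$ and $-\alpha$, hence to equal $\alpha\F_q$, giving $a=0$. The paper's proof is the terse one-liner ``Note that $\alpha^q=-\alpha$; the conclusion follows from Theorem~\ref{thm: nearlymaximal} by observing that the line passing through $\alpha$ and $-\alpha$ is simply $\alpha\F_q$,'' and your version simply unpacks this.
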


\begin{proof}
Note that $\alpha^q=-\alpha$. The conclusion follows Theorem~\ref{thm: nearlymaximal} by observing that the line passing through $\alpha$ and $-\alpha$ is simply $\alpha \F_q$. $\square$
\end{proof}

\medskip

We can apply the above corollary to cubic Paley graphs. 

\begin{corollary}
Consider the cubic Paley graph $\mathrm{GP}(q^2,3)$, where $q \geq 11$ and $q \equiv 2 \pmod 3$. If $q \equiv 3 \pmod 4$, then $N(\alpha) \cup \{\alpha, -\alpha\}$ is a maximal clique; if $q \equiv 1 \pmod 4$, then $N(\alpha) \cup \{\alpha, -\alpha\}$ forms a maximal clique unless $N(\alpha)=tN(\alpha)$, in which case $N(\alpha) \cup \{\alpha, -\alpha, t\alpha, -t\alpha\}$ forms a maximal clique, where $t$ is a primitive fourth root of unity in $\F_{q}^*$.
\end{corollary}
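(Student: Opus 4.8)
The plan is to obtain the statement as a direct specialisation of Corollary~\ref{alpha-alpha} to $m=3$, after pinning down the value of $d:=\gcd\!\big(q-1,\tfrac{q+1}{3}-2\big)$. First I would check the hypotheses of Corollary~\ref{alpha-alpha}: since $q\equiv 2\pmod 3$ we have $3\mid(q+1)$, and since $q$ is odd $q+1$ is even, so $6\mid(q+1)$ and hence $3\mid\tfrac{q+1}{2}$; the assumption $q\ge 11$ gives $3\le\tfrac{q+1}{3}$; and $p$ is odd, so $p\nmid 2=m-1$. I would also record that $N(\alpha)\cup\{\alpha,-\alpha\}$ is indeed a clique: $N(\alpha)\cup\{\alpha\}\subset\F_q\cup\{\alpha\}$ is a clique because $\F_q^*\subseteq(\F_{q^2}^*)^m$ and by the definition of $N(\alpha)$; $\alpha$ is an $m$-th power by Lemma~\ref{calpha} (as $3\mid\tfrac{q+1}{2}$), so $\alpha\sim-\alpha$; and $N(-\alpha)=N(\alpha)$ by Lemma~\ref{ubar} since $\alpha^q=-\alpha$.

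Next I would compute $d$. Using the identity $d=\gcd\!\big(\tfrac{q+1}{m}-2,\,2(m-1)\big)$ derived in the proof of Theorem~\ref{thm: nearlymaximal}, we get $d=\gcd\!\big(\tfrac{q-5}{3},4\big)$. If $q\equiv 3\pmod 4$, then $q\equiv 11\pmod{12}$, so writing $q=12k+11$ gives $\tfrac{q-5}{3}=4k+2$ and $d=2$; if $q\equiv 1\pmod 4$, then $q\equiv 5\pmod{12}$, so writing $q=12k+5$ gives $\tfrac{q-5}{3}=4k$ and $d=4$. In the first case, $d=2$ means the subgroup $H$ of $\F_q^*$ appearing in Corollary~\ref{alpha-alpha} is $\{\pm1\}$, so $\alpha H=\{\alpha,-\alpha\}$; hence any maximal clique $C'$ containing $N(\alpha)\cup\{\alpha,-\alpha\}$ is contained in $N(\alpha)\cup\{\alpha,-\alpha\}$, forcing $C'=N(\alpha)\cup\{\alpha,-\alpha\}$, which is therefore maximal.

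For the case $q\equiv 1\pmod 4$ we have $d=4$, so $H=\{1,-1,t,-t\}$ with $t^2=-1$ a primitive fourth root of unity, and Corollary~\ref{alpha-alpha} gives $C'\subseteq N(\alpha)\cup\{\alpha,-\alpha,t\alpha,-t\alpha\}$. The elementary ingredients I would use are $N(c\alpha)=cN(\alpha)$ for all $c\in\F_q^*$ (again because $\F_q^*\subseteq(\F_{q^2}^*)^m$), and $N(\alpha)=N(-\alpha)=-N(\alpha)$ (from Lemma~\ref{ubar} together with $N(-\alpha)=-N(\alpha)$), which give $N(t\alpha)=tN(\alpha)$ and $N(-t\alpha)=-tN(\alpha)=tN(\alpha)$. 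Then $t\alpha$ (resp.\ $-t\alpha$) is adjacent to every point of $N(\alpha)$ precisely when $N(\alpha)\subseteq N(t\alpha)=tN(\alpha)$, i.e.\ when $N(\alpha)=tN(\alpha)$ (equality of equinumerous sets, by \eqref{Nu}). Hence: if $N(\alpha)\ne tN(\alpha)$, neither $t\alpha$ nor $-t\alpha$ can join a clique containing $N(\alpha)$, so $C'=N(\alpha)\cup\{\alpha,-\alpha\}$ and it is maximal; if $N(\alpha)=tN(\alpha)$, then since $t\alpha-(-t\alpha)$, $t\alpha\mp\alpha$ and $-t\alpha\mp\alpha$ are all $\F_q^*$-multiples of the $m$-th power $\alpha$, the set $N(\alpha)\cup\{\alpha,-\alpha,t\alpha,-t\alpha\}$ is a clique, and by the containment it is the unique maximal clique containing $N(\alpha)\cup\{\alpha,-\alpha\}$.

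Almost everything here is bookkeeping once Corollary~\ref{alpha-alpha} is in hand; the only step demanding any care is the $q\equiv 1\pmod 4$ dichotomy, and within it the observation that $N(-t\alpha)=N(t\alpha)$ (which follows from $N(\alpha)=-N(\alpha)$) — this is exactly what guarantees that $t\alpha$ and $-t\alpha$ are either both addable or both not, so that the conclusion splits cleanly according to whether $N(\alpha)$ is invariant under multiplication by $t$.
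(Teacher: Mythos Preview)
Your proof is correct and follows essentially the same route as the paper: specialise Corollary~\ref{alpha-alpha} to $m=3$, compute $d=\gcd(q-1,\tfrac{q-5}{3})\in\{2,4\}$ according to $q\bmod 4$, and in the $d=4$ case split on whether $N(\alpha)=tN(\alpha)$. Your version is more explicit than the paper's in two places---the gcd computation via $d=\gcd(\tfrac{q+1}{m}-2,2(m-1))$, and the verification that $t\alpha$ and $-t\alpha$ are either both addable or both not (using $N(\alpha)=-N(\alpha)$)---but the underlying argument is the same.
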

\begin{proof}
If $q \equiv 3 \pmod 4$, then $\gcd(q-1,\frac{q-5}{3})=2$ and the conclusion follows by Corollary~\ref{alpha-alpha}. Next assume that $q \equiv 1 \pmod 4$, or equivalently $\gcd(q-1,\frac{q-5}{3})=4$. Then Corollary~\ref{alpha-alpha} states that either $N(\alpha) \cup \{\alpha, -\alpha\}$ or $N(\alpha) \cup \{\alpha, -\alpha, t\alpha, -t\alpha\}$ forms a maximal clique. Note that in the latter case, we have $N(\alpha)=N(t\alpha)=tN(\alpha)$ since $t \in \F_q^*$. Conversely, if $N(\alpha)=tN(\alpha)$, then we know that $N(\alpha)=N(t\alpha)=N(t^2\alpha)=N(t^3\alpha)$, so $\alpha,-\alpha,t\alpha, -t \alpha$ share their $\F_q$-neighborhood and thus $N(\alpha) \cup \{\alpha, -\alpha, t\alpha, -t\alpha\}$ forms a maximal clique. $\square$
\end{proof}

\begin{remark}\rm
Recall that Conjecture~\ref{Conj1} states that $N(\alpha) \cup \{\alpha, -\alpha\}$ is a maximal clique even when $q \equiv 1 \pmod 4$. Indeed, computations via SageMath shows that for all prime powers $q \leq 1200$ such that $q \equiv 5 \pmod {12}$, we have $N(\alpha)\neq tN(\alpha)$, where $t$ is a primitive fourth root of unity in $\F_{q}^*$.
\end{remark}

\subsection{Further discussions on Theorem~\ref{thm: nearlymaximal}}

Note that in Theorem~\ref{thm: nearlymaximal}, we assumed that $p \nmid (m-1)$. If we fix $m$ and $p>m-1$ (equivalently, if the characteristic of $\F_{q^2}$ is at least $m$), then the assumption $p \nmid (m-1)$ is satisfied. However, if $m$ grows as a function of the characteristic $p$, then Theorem~\ref{thm: nearlymaximal} may not be useful for $\mathrm{GP}(q^2,m)$. However, in view of Proposition~\ref{prop: samenhbd} and the proof of Theorem~\ref{thm: nearlymaximal}, in the statement of Theorem~\ref{thm: nearlymaximal}, the condition $p \nmid (m-1)$ can be replaced by the following weaker condition: $N(u)$ is not the union of additive $a\F_p$-cosets of $\F_q$ for any $a \in \F_q^*$. While this weaker condition might be difficult to check in general, the following proposition provides concrete sufficient conditions which are easy to verify. Roughly speaking, the proposition states that the condition $p \nmid (m-1)$ in the statement of Theorem~\ref{thm: nearlymaximal} can be relaxed to $m \leq q^{2/3}$ (note that when $m>q^{2/3}$, we can use Proposition~\ref{prop:upperbound} to control the size of maximal cliques; see the related discussion on Section~\ref{stability}).

\begin{proposition}\label{prop: sufficient}
Let $n=2^st$, where $t$ is an odd integer. Let $q=p^n$ and let $u \in \F_{q^2}\setminus \F_q$. Further assume that $p>(2n-1)^2$ and that one of the following conditions holds:
\begin{enumerate} [(i)]
    \item $t=1$;
    \item $m \leq q^{2/3} (p^{2^s}-1)/p^{2^s}$.
\end{enumerate}
Then $N(u)$ is not the union of additive $a\F_p$-cosets of $\F_q$ for any $a \in \F_q^*$.
\end{proposition}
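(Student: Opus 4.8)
The plan is to argue by contradiction. Suppose $N(u)$ were the union of additive $a\F_p$-cosets of $\F_q$ for some $a\in\F_q^*$. Since $m\mid(q+1)$, every element of $\F_q^*$ — in particular $a^{-1}$ — is an $m$-th power in $\F_{q^2}^*$, so setting $w:=a^{-1}u\in\F_{q^2}\setminus\F_q$ one checks directly that $N_{q,m}(w)=a^{-1}N(u)$, which is then a union of $\F_p$-cosets of $\F_q$. Consider
\[
M:=\{\,w-z:z\in N_{q,m}(w)\,\}=(w-\F_q)\cap(\F_{q^2}^*)^m .
\]
Then $M$ is a union of translates of the prime field $\F_p$, it avoids $0$ (as $w\notin\F_q$), and $|M|=|N_{q,m}(w)|=\frac{q+1}{m}-1$ by \eqref{Nu}. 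Consequently it suffices to produce an element $\xi\in M$ of degree $2n$ over $\F_p$: then $\xi+\F_p\subseteq M\subseteq(\F_{q^2}^*)^m$, whereas $p>(2n-1)^2$ forces, via Corollary~\ref{cor: consecutivempowers}, some element of $\xi+\F_p$ to fail to be an $m$-th power in $\F_{q^2}$ — a contradiction.

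To find such a $\xi$ I would bound the number of elements of $w-\F_q$ whose degree over $\F_p$ is less than $2n$. For $z\in\F_q$ the element $w-z$ has degree over $\F_p$ dividing $2n$ but not $n$ (otherwise $w=(w-z)+z\in\F_q$); hence its degree equals $2n$ unless it lies in some maximal subfield $\F_{p^{2n/\ell}}$ with $\ell$ a prime dividing $2n$. Writing $n=2^st$ with $t$ odd, the affine line $w-\F_q$ is disjoint from $\F_q=\F_{p^{2n/2}}$, so only the odd primes $\ell\mid t$ are relevant; and for each such $\ell$ the intersection $(w-\F_q)\cap\F_{p^{2n/\ell}}$, when non-empty, is a coset of $\F_q\cap\F_{p^{2n/\ell}}=\F_{p^{\gcd(n,2n/\ell)}}=\F_{p^{n/\ell}}$ (using that $\ell$ is odd). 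Therefore the number of elements of $w-\F_q$ — a fortiori of $M$ — of degree $<2n$ over $\F_p$ is at most $B:=\sum_{\ell\mid t,\ \ell\text{ prime}}p^{n/\ell}$, with $B=0$ when $t=1$. It is thus enough to verify $|M|=\frac{q+1}{m}-1>B$.

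Under hypothesis (i) we have $t=1$, so $B=0<2\le|M|$ and we are done at once. Under hypothesis (ii), the bound $m\le q^{2/3}(p^{2^s}-1)/p^{2^s}$ together with $q+1>q$ gives
\[
\frac{q+1}{m}>q^{1/3}\cdot\frac{p^{2^s}}{p^{2^s}-1}>p^{n/3}+p^{n/3-2^s}.
\]
Comparing the right-hand side with $B$ is a short, entirely elementary case analysis on the least prime factor $\ell_1$ of $t$: if $\ell_1=3$, then $3\mid n$, the leading term $p^{n/3}$ of the right-hand side cancels the dominant term of $B$, and the surplus $p^{n/3-2^s}=p^{2^s(t-3)/3}$ dominates every remaining exponent $n/\ell$ ($\ell\mid t$, $\ell\ge5$) because $t\ge3\ell$ forces $\frac{t-3}{3}>\frac{t}{\ell}$; if $\ell_1\ge5$, then already $p^{n/3}$ beats $B$ with room to spare since $n/3-n/\ell_1\ge 2n/15$ while $p>(2n-1)^2$ is enormous. (The only boundary case, $t=3$, is handled by noting that $\frac{q+1}{m}$ is an integer strictly exceeding $p^{n/3}+1$.) I expect this numerical estimate — the bookkeeping over the prime factorization of $t$ forced by the precise shape of condition (ii) — to be the only delicate point; everything else is the subfield/degree count above, followed by a direct appeal to Corollary~\ref{cor: consecutivempowers}.
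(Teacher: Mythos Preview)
Your proposal is correct and follows essentially the same strategy as the paper: argue by contradiction, pass to $w=a^{-1}u$ so that the coset structure is with respect to $\F_p$ itself, locate an element of degree $2n$ over $\F_p$ inside the relevant affine line by bounding how many elements can fall into proper subfields of $\F_{q^2}$, and then invoke Corollary~\ref{cor: consecutivempowers}.

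The only notable difference is the bookkeeping in case (ii). The paper sums over \emph{all} subfields $\F_{p^d}$ with $d\mid 2n$ and $d\nmid n$, observes that each such $d$ equals $2^{s+1}k$ with $k$ a proper divisor of $t$ (so $k\le t/3$), and bounds the total count crudely by the full geometric sum
\[
\sum_{k=1}^{t/3} p^{2^sk}=\frac{p^{2^s(t/3+1)}-p^{2^s}}{p^{2^s}-1}<q^{1/3}\cdot\frac{p^{2^s}}{p^{2^s}-1}-1,
\]
which is precisely the quantity built into hypothesis (ii); the comparison with $|N(u)|>q/m-1$ is then immediate and no case analysis is needed. Your route via \emph{maximal} subfields yields the sharper bound $B=\sum_{\ell\mid t,\ \ell\ \text{prime}}p^{n/\ell}$, but then forces the prime-by-prime verification you sketch. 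That verification is sound (your handling of the boundary $t=3$ and the dominance argument using $p>(2n-1)^2$ are both fine), but the paper's cruder geometric sum sidesteps it entirely---which explains the otherwise slightly odd-looking shape of condition (ii).
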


\begin{proof}
We proceed with proof by contradiction. Suppose that $N(u)$ is the union of additive $a\F_p$-cosets of $\F_q$ for some $a \in \F_q^*$. Since $u$ is an $m$-th power in $\F_{q^2}$, we have $0 \in N(u)$ and thus $a\F_p \in N(u)$. This implies that every element in $u+a\F_p$ is an $m$-th power in $\F_{q^2}$, or equivalently, every element in $u/a+\F_p$ is an $m$-th power in $\F_{q^2}$ (since each element in $\F_q^*$ is an $m$-th power). Since $u \in \F_{q^2} \setminus \F_q$ and $a \in \F_q$, we have $u/a \in \F_{q^2} \setminus \F_q$.

(i) Since $t=1$, $n$ is a power of $2$. It follows that all proper subfields of $\F_{q^2}$ are contained in the subfield $\F_q$. Therefore, $u/a$ is of degree $2n$ over $\F_p$. By Corollary~\ref{cor: consecutivempowers}, there is an element in $u/a+\F_p$ which is not an $m$-th power in $\F_{q^2}$, a contradiction.

(ii) Let $\F_{p^d}$ be a proper subfield of $\F_{q^2}$ such that $(u/a+\F_q) \cap \F_{p^d} \neq \emptyset$. Then $d \mid 2n$ and $d \nmid n$ since $(u/a+\F_q) \cap \F_q=\emptyset$. In particular, $d=2^{s+1}k$ where $k$ is a proper divisor of $t$.  Since $t$ is odd, it follows that $k \leq t/3$. Since $(u/a+\F_q) \cap \F_{p^d} \neq \emptyset$,  there is $b \in \F_q$, such that $x=u/a+b \in \F_{p^d}$. Then $u/a+\F_q=(u/a+b)+\F_q=x+\F_q$ and $\F_{p^d}=x+\F_{p^d}$. It follows that $(u/a+\F_q) \cap \F_{p^d}=x+(\F_q \cap \F_{p^d})$ and thus
$$
|(u/a+\F_q) \cap \F_{p^d}|=|\F_q \cap \F_{p^d}|=p^{\gcd(n,d)}=p^{\gcd(2^st, 2^{s+1}k)}=p^{2^sk}=p^{d/2}. 
$$
Therefore, 
\begin{align*}
&\bigg|(u/a+\F_q) \bigcap \bigg(\bigcup_{d\mid 2n, d<2n} \F_{p^d}\bigg) \bigg|
=\bigg|(u/a+\F_q) \bigcap \bigg(\bigcup_{d\mid 2n, d \nmid n}  \F_{p^d}\bigg) \bigg|\\
&\leq \sum_{d\mid 2n, d \nmid n} p^{d/2} \leq \sum_{k=1}^{t/3} p^{2^sk}= \frac{p^{2^s(t/3+1)}-p^{2^s}}{p^{2^s}-1}
=\frac{q^{1/3}p^{2^s}-p^{2^s}}{p^{2^s}-1}<q^{1/3} \frac{p^{2^r}}{p^{2^r}-1}-1.
\end{align*}
Since $m \leq q^{2/3} (p^{2^s}-1)/p^{2^s}$, by equation~\eqref{Nu}, we have
$$
|N(u)|=\frac{q+1}{m}-1>\frac{q}{m}-1 \geq q^{1/3} \frac{p^{2^r}}{p^{2^r}-1}-1>\bigg|(u/a+\F_q) \bigcap \bigg(\bigcup_{d\mid 2n, d<2n} \F_{p^d}\bigg) \bigg|.
$$
Thus, there is $b \in N(u)$, such that $u/a+b/a \notin \bigcup_{d\mid 2n, d<2n} \F_{p^d}$, that is, $u/a+b/a$ is not in any proper subfield of $\F_{q^2}$. Thus, $u/a+b/a$ is of degree $2n$ over $\F_p$. However, since $N(u)$ is the union of additive $a\F_p$-cosets of $\F_q$, we must have $b+a\F_p \subset N(u)$. Thus $u-(b+a\F_p)=(u-b)+a\F_p$ consists of $m$-th powers in $\F_{q^2}$, and so does $(u-b)/a+\F_p$, which contradicts Corollary~\ref{cor: consecutivempowers}. $\square$
\end{proof}

\medskip

Next we show the assumption $p \nmid (m-1)$ in Theorem~\ref{thm: nearlymaximal} is necessary by describing an infinite family of counterexamples. Note that they also show that Proposition~\ref{prop:upperbound} is sharp and the condition (ii) in Proposition~\ref{prop: sufficient} is nearly sharp. These counterexamples are based on subfield obstructions, which often occur as a major barrier in problems arising from the structure of finite fields. 

\begin{example}\rm \label{counterexample}
Let $r$ be an odd prime power and consider the generalised Paley graph $\mathrm{GP}(q^2,m)$, where $q=r^3$ and $m=r^2-r+1$.  We have $m \mid (q+1)$ as $q+1=r^3+1=(r+1)(r^2-r+1)$. Also, note that
$$
\frac{r^6-1}{r^2-1}=r^4+r^2+1=(r^2+1)^2-r^2=(r^2-r+1)(r^2+r+1).
$$
Therefore, $m \mid \frac{r^6-1}{r^2-1}$, and thus the subfield $\F_{r^2}$ forms a clique in $\mathrm{GP}(r^6, m)=\mathrm{GP}(q^2,m)$. Note that $\omega(\mathrm{GP}(q^2,m))=q= r^3$, so \cite[Corollary 3.1]{Y22} implies that the subfield $\F_{r^2}$ in fact forms a maximal clique (for otherwise $\omega(\mathrm{GP}(r^6,m))\geq r^4$). In particular, for each $u \in \F_{r^2} \setminus \F_{r^3}=\F_{r^2} \setminus \F_r$, we have $\F_{r^2} \cap \F_{r^3}=\F_r \subset N(u)$. However, note that 
$$|N(u)|=\frac{q+1}{m}-1=\frac{r^3+1}{r^2-r+1}-1=r.$$
This forces $N(u)=\F_r$ and thus $C=\{u\} \cup N(u)$ (with size $r+1$) is not a maximal clique. Moreover, $C$ is contained in the maximal clique $\F_{r^2}$, which has size $r^2$. 

Note that $\F_{r^2}$ is a maximal clique with size $r^2=(\frac{q+1}{m}-1)^2$; which shows that Proposition~\ref{prop:upperbound} is best possible. Note that $\gcd(q-1, \frac{q+1}{m}-2)=\gcd(r^3-1, r-1)=r-1<r^2-r$. This shows that in Theorem~\ref{thm: nearlymaximal}, the assumption $p \nmid (m-1)$ is necessary. Also note that in this case, $N(u)$ is the union of additive $\F_p$-cosets of $\F_q$, and $m=r^2-r+1 \approx q^{2/3}$. This shows that the condition (ii) in Proposition~\ref{prop: sufficient} is nearly sharp. In fact, when $q=p^3$, we have $m=p^2-p+1$; and condition (ii) in Proposition~\ref{prop: sufficient} states that in this case if $m \leq p^2-p$ and $p>25$, then $N(u)$ is the union of additive $a\F_p$-cosets of $\F_q$ for any $a \in \F_q^*$.

In the case $r=3$, the corresponding graph is $\mathrm{GP}(27^2, 7)$. Computation using SageMath in fact indicated that a maximal clique in $\mathrm{GP}(27^2, 7)$ has size either $9$ or $27$. In particular, this means that there is no maximal clique of size $5=\frac{27+1}{7}+1$. $\square$
\end{example}

To end the section, we remark that most arguments used in this section also apply to a general Peisert-type graph \cite[Definition 2]{AGLY22}, which is a Cayley graph defined on $\F_{q^2}^+$, with the connection set being the union of $\F_q^*$-cosets in $\F_{q^2}^*$. In fact, most arguments we used only relied on the fact that the set of $m$-th powers in $\F_{q^2}^*$ is a union of $\F_q^*$-cosets in $\F_{q^2}^*$, or equivalently, $\mathrm{GP}(q^2,m)$ is a Peisert-type graph when $m \mid (q+1)$ (see \cite[Lemma 2.10]{AY21}). However, we decided not to include such generalisation and complication as the parameters involved in a general Peisert-type graph are not always as nice as that of generalised Paley graphs to deal with.

\section{Stability of canonical cliques} \label{stability}

The goal of this section is to improve Lemma~\ref{embed}. We follow the notations used in the previous sections.

We have seen in Example~\ref{counterexample} that Proposition~\ref{prop:upperbound} is best possible. The following theorem improves Proposition~\ref{prop:upperbound} under extra assumptions. Note that in the proof of Proposition~\ref{prop:upperbound}, we only used the fact that the generalised Paley graph $\mathrm{GP}(q^2,m)$ can be realised as the block graph of an orthogonal array (Lemma~\ref{PaleyAsOAGraph}). We are going to use the extra geometric properties of generalised Paley graphs developed in the previous sections. 

\begin{theorem}\label{thm: stable}
Let $m\mid (q+1)$ and $2\leq m \leq \frac{q+1}{3}$. If $C'$ is a maximal clique in the generalised Paley graph $\mathrm{GP}(q^2,m)$ which is not maximum, then $|C'|\leq (\frac{q+1}{m}-1)^2$; moreover, if $p \nmid (m-1)$ and $m \leq \frac{q+1}{4}$, then $|C'| \leq 1+\frac{q+1}{m}(\frac{q+1}{m}-3)$.
\end{theorem}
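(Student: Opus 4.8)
The first bound $|C'|\le\big(\tfrac{q+1}{m}-1\big)^2$ is exactly Proposition~\ref{prop:upperbound}, so the work lies entirely in the refinement. The plan is to rerun the dichotomy behind Lemma~\ref{OAmaximal} and to analyse its extremal branch using the finite-geometric input of Section~\ref{partialresult}. Write $w=\frac{q+1}{m}$; the hypothesis $m\le\frac{q+1}{4}$ means $w\ge 4$. Using Lemma~\ref{PaleyAsOAGraph} I realise $\mathrm{GP}(q^2,m)$ as the block graph of $OA(w,q)$ (equivalently, I work in $AG(2,q)$, where by Lemma~\ref{TSziklai} the maximum cliques are precisely the $m$-ary lines). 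Exactly as in the proof of Lemma~\ref{OAmaximal}, I would translate $C'$ so that $0\in C'$, let $\ell_1,\dots,\ell_w$ be the $m$-ary lines through $0$ (there are $w$ of them, Lemma~\ref{lem: numberoflines}), set $D_i=(C'\setminus\{0\})\cap\ell_i$, and relabel so $|D_1|\le\cdots\le|D_w|$. That argument gives $|C'|=1+\sum_i|D_i|\le 1+w|D_w|$ together with $|D_w|\le w-2$. If $|D_w|\le w-3$ then already $|C'|\le 1+w(w-3)=1+\frac{q+1}{m}\big(\frac{q+1}{m}-3\big)$, so I am left with the single extremal case $|D_w|=w-2$.

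In that case $\ell:=\ell_w$ is a maximum clique and $C'\cap\ell=\{0\}\cup D_w$ has exactly $w-1$ points. The key point is that this forces $C'$ to be an automorphic copy of a clique already covered by Theorem~\ref{thm: nearlymaximal}. Since $C'$ is maximal it cannot lie inside $\ell$, so I fix $c^*\in C'\setminus\ell$. As $\ell$ is an $m$-ary line through $0$ it equals $\gamma\F_q$ for an $m$-th power $\gamma$, hence $\sigma\colon x\mapsto\gamma^{-1}x$ lies in $\mathrm{Aut}(\mathrm{GP}(q^2,m))$ by Lemma~\ref{AutGP} and maps $\ell$ onto $\F_q$. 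Put $u=\sigma(c^*)\in\F_{q^2}\setminus\F_q$. Every element of $\sigma(C'\cap\ell)\subseteq\F_q$ is adjacent to $u$, hence lies in $N(u)$; since $|\sigma(C'\cap\ell)|=w-1=|N(u)|$ by \eqref{Nu}, I conclude $\sigma(C'\cap\ell)=N(u)$, so that $\sigma(C')\supseteq N(u)\cup\{u\}$ with $\sigma(C')$ still a maximal clique. Now Theorem~\ref{thm: nearlymaximal} applies (its hypotheses hold, as $2\le m\le\frac{q+1}{4}\le\frac{q+1}{3}$ and $p\nmid(m-1)$) and yields $|C'|=|\sigma(C')|\le\frac{q+1}{m}-1+\gcd\!\big(q-1,\frac{q+1}{m}-2\big)\le(w-1)+(w-2)=2w-3$. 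Finally $w^2-3w+1-(2w-3)=(w-1)(w-4)\ge 0$ for $w\ge 4$, so $2w-3\le 1+\frac{q+1}{m}\big(\frac{q+1}{m}-3\big)$, which closes the extremal case.

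The step I expect to be the crux is this extremal-case reduction: recognising that once $|D_w|$ attains its maximum $w-2$, the regularity statement \eqref{Nu} pins $C'\cap\ell$ down to the \emph{entire} $\F_q$-neighbourhood of any outside vertex, so that $C'$ becomes, up to an automorphism, a maximal clique through some $N(u)\cup\{u\}$ and Theorem~\ref{thm: nearlymaximal} takes over; everything else is bookkeeping. Two sanity checks I would record explicitly: the hypothesis $p\nmid(m-1)$ enters only through Theorem~\ref{thm: nearlymaximal} (ultimately through Proposition~\ref{prop: samenhbd}), and the hypothesis $m\le\frac{q+1}{4}$ is used only to guarantee $(w-1)(w-4)\ge 0$. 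Example~\ref{counterexample}---e.g.\ $\mathrm{GP}(27^2,7)$, where $p\mid(m-1)$ and $w=4$ yet there is a maximal clique of size $9>5$---shows that neither extra hypothesis can simply be dropped.
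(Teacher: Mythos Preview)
Your proposal is correct and follows essentially the same route as the paper's proof: both run the dichotomy from Lemma~\ref{OAmaximal} on $|D_w|$, handle $|D_w|\le w-3$ directly, and in the extremal case $|D_w|=w-2$ normalise so that the large collinear part becomes a full $\F_q$-neighbourhood $N(u)$, then invoke Theorem~\ref{thm: nearlymaximal} to get $|C'|\le 2w-3$ and compare with $1+w(w-3)$ via $(w-1)(w-4)\ge 0$. The only cosmetic differences are that the paper stays in the orthogonal-array language a bit longer and normalises by multiplying by $x^{-1}$ for some nonzero $x\in C'\cap\ell$, whereas you multiply by $\gamma^{-1}$ for a slope $\gamma$ of $\ell$; both send $\ell$ to $\F_q$ and are automorphisms by Lemma~\ref{AutGP}.
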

\begin{proof}
Let $w=(q+1)/m$. By our assumption, $w \geq 3$.  We have shown that $\mathrm{GP}(q^2,m)$ is isomorphic to the block graph $X_O$, where $O=OA(w,q)$ is the orthogonal array constructed in Section~\ref{construction}. Let $C$ be the corresponding clique of $C'$ in the block graph $X_{O}$. The bound $|C'|=|C| \leq (w-1)^2$ has been proved in Proposition~\ref{prop:upperbound}. Next we further assume that $p \nmid (m-1)$.

Without loss of generality, we may assume that $0 \in C'$. It is easy to verify the point $0 \in C'$ corresponds to the column with all zeros in the orthogonal array $O$ constructed in Section~\ref{construction}. We follow the notations used in the proof of Lemma~\ref{OAmaximal}. Note that equation~\eqref{ubmaximal} gives a more precise bound that $|C| \leq 1+w|D_w|$ with $|D_w| \leq w-2$. 

Assume that $|D_w|=w-2$ and $|C|>1+|D_w|$. Then that means there is an $i$, such that there are $w-1$ columns in $C$ with the zero entry in the $i$-th row. In particular, this implies that there is $S \subset C$ such that $|S|=w-1$, $S$ contains the column with all zeros, and $S$ is contained in a canonical clique of $X_{O}$. Moreover, there is a column $u \in C$ such that $u \notin S$. Therefore, the corresponding clique $S'$ has size $w-1$ with $0 \in S'$ and is contained in a canonical clique of $\mathrm{GP}(q^2,m)$; moreover, the corresponding vertex $u'$ is not in that canonical clique and $S \cup \{u'\}$ is a clique. Let $x \in S'$ with $0 \notin x$; then $T':=x^{-1}S'$ is a clique such that $0,1 \in T'$ and $T'$ is contained in a canonical clique of $\mathrm{GP}(q^2,m)$. By Lemma~\ref{TSziklai}, we must have $T' \subset \F_q$. Note that $T'$ has size $w-1=\frac{q+1}{m}-1$. Note that $v:=x^{-1}u' \notin \F_q$ and $T' \cup \{v\}$ is a clique in $\mathrm{GP}(q^2,m)$. Therefore, we see that $T=N(v)$ as $T \subset N(v)$ and $|N(v)|=\frac{q+1}{m}-1$ as in equation~\eqref{Nu}. Recall that $x^{-1}C'$ is a maximal clique in $\mathrm{GP}(q^2,m)$ that contains $N(v) \cup \{v\}$. Therefore, Theorem~\ref{thm: nearlymaximal} implies that $|C'| \leq \frac{q+1}{m}-1+\gcd(q-1,\frac{q+1}{m}-2) \leq w-1+w-2=2w-3$.

The above argument shows that if $|D_w|=w-2$, then $|C'| \leq 2w-3$. On the other hand, if $|D_w|\leq w-3$, then we have $|C'|=|C| \leq 1+w|D_w|\leq 1+w(w-3)$. Since $w^2-5w+4=(w-1)(w-4) \geq 0$ for $w\geq 4$, we have $|C'| \leq \max\{2w-3,1+w(w-3)\}=1+w(w-3)$ when $w \geq 4$. $\square$
\end{proof}

\medskip

The following corollary improves Lemma~\ref{embed} significantly when the graph $\mathrm{GP}(q^2,m)$ has a small edge density: it shows that canonical cliques in $\mathrm{GP}(q^2,m)$ are very stable.

\begin{corollary}\label{corstable}
Assume $m > \frac{q+1}{\sqrt{q}+1}$ and $m\mid (q+1)$. If $C$ is a clique in the generalised Paley graph $\mathrm{GP}(q^2,m)$ such that $0,1 \in C$ and $|C|> (\frac{q+1}{m}-1)^2$, then $C \subset \F_q$. Moreover, if $p \nmid (m-1)$ and $m \leq \frac{q+1}{4}$, then the previous statement holds for  $|C'|>1+\frac{q+1}{m}(\frac{q+1}{m}-3)$. 
\end{corollary}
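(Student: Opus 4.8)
The plan is to deduce this directly from Theorem~\ref{thm: stable}, using Corollary~\ref{(q+1)/2} in the degenerate range and Lemma~\ref{TSziklai} to identify the resulting maximum clique. Write $w=\frac{q+1}{m}$, so that the hypothesis $m>\frac{q+1}{\sqrt q+1}$ is equivalent to $w-1<\sqrt q$, i.e. $(w-1)^2<q=\omega(\mathrm{GP}(q^2,m))$; this is exactly what makes the size threshold in the conclusion lie strictly below the clique number, so that the statement has content. Given a clique $C$ with $0,1\in C$, the first step is to enlarge $C$ to a maximal clique $\widehat C\supseteq C$.

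For the first assertion, assume $|C|>(w-1)^2$, so $|\widehat C|\ge|C|>(w-1)^2$. If $w\le 2$ (equivalently $m=\frac{q+1}{2}$ or $m=q+1$), then every maximal clique of $\mathrm{GP}(q^2,m)$ is maximum by Corollary~\ref{(q+1)/2}. If $w\ge 3$ (equivalently $2\le m\le\frac{q+1}{3}$), then Theorem~\ref{thm: stable} says a maximal non-maximum clique has size at most $(w-1)^2$, which is impossible for $\widehat C$. Either way $\widehat C$ is a maximum clique containing $0$ and $1$, so $\widehat C=\F_q$ by Lemma~\ref{TSziklai}, hence $C\subseteq\widehat C=\F_q$.

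For the ``moreover'' part, impose in addition $p\nmid(m-1)$ and $m\le\frac{q+1}{4}$, so that $w\ge 4$; then $w\ge 3$, and the baseline and strengthened hypotheses of Theorem~\ref{thm: stable} both hold. Also $1+w(w-3)=w^2-3w+1<(w-1)^2<q$, so the new threshold still lies below $\omega$. Assuming $|C|>1+\frac{q+1}{m}\bigl(\frac{q+1}{m}-3\bigr)$, the same enlargement gives $|\widehat C|>1+w(w-3)$, which by the strengthened bound of Theorem~\ref{thm: stable} forces $\widehat C$ to be maximum; Lemma~\ref{TSziklai} then gives $\widehat C=\F_q$ and $C\subseteq\F_q$.

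There is no essential difficulty here: the corollary simply repackages Theorem~\ref{thm: stable}. The only point needing care is the bookkeeping of hypotheses --- verifying that $w\ge 3$ (resp. $w\ge 4$ and $p\nmid(m-1)$) are precisely the hypotheses of the two parts of Theorem~\ref{thm: stable}, dispatching the small cases $w\in\{1,2\}$ through Corollary~\ref{(q+1)/2}, and recording that the assumption $m>\frac{q+1}{\sqrt q+1}$ is what pushes both $(w-1)^2$ and $1+w(w-3)$ below the clique number $q$, giving the result its substance.
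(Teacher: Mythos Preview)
Your proof is correct and follows essentially the same approach as the paper's: enlarge $C$ to a maximal clique, handle the degenerate cases $w\in\{1,2\}$ via Corollary~\ref{(q+1)/2}, apply Theorem~\ref{thm: stable} to rule out non-maximum maximal cliques, and invoke Lemma~\ref{TSziklai} to conclude that the containing maximum clique is $\F_q$. Your version is in fact slightly more careful in spelling out the bookkeeping (the equivalence $m>\frac{q+1}{\sqrt q+1}\Leftrightarrow (w-1)^2<q$ and the inequality $1+w(w-3)<(w-1)^2<q$), but the logic is identical.
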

\begin{proof}
In the case $m=q+1$ or $m=\frac{q+1}{2}$, the statement is trivial; see Corollary~\ref{(q+1)/2}.

Next we assume that $m \leq \frac{q+1}{3}$. Note that $m > \frac{q+1}{\sqrt{q}+1}$ implies that $(\frac{q+1}{m}-1)^2<q$. 
Let $C$ be a clique such that $0,1 \in C$ and $|C|> (\frac{q+1}{m}-1)^2$. Then $C$ is contained in a maximal clique $C'$. If $C'$ is not a maximum clique, then Theorem~\ref{thm: stable} implies that $|C| \leq |C'| \leq (\frac{q+1}{m}-1)^2$, violating our assumption. Thus, $C'$ is a maximum clique in $\mathrm{GP}(q^2,m)$ containing $0,1$. By Lemma~\ref{TSziklai}, we must have $C'=\F_q$. This shows that $C \subset \F_q$.

If we further assume that $p \nmid (m-1)$ and $m \leq \frac{q+1}{4}$, then the same argument leads to an improved lower bound on $|C|$. $\square$
\end{proof}

\medskip
For the rest of the section, we consider maximal cliques in the
generalised Paley graphs $\mathrm{GP}(q^2, \frac{q+1}{3})$ and $\mathrm{GP}(q^2, \frac{q+1}{4})$.

\begin{proposition}
Assume $q \equiv 2 \pmod 3$. In the generalised Paley graph $\mathrm{GP}(q^2, \frac{q+1}{3})$, a maximal clique has size either $3$ or $q$. Moreover, a maximal clique with size $q$ must be a canonical (maximum) clique, and a maximal clique with size $3$ must be affinely equivalent to the maximal clique constructed in the $(\mathbb{F}_q, \alpha)$-construction.
\end{proposition}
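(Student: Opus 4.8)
The plan is to establish the three assertions in turn: that every maximal clique has size $3$ or $q$, that the size-$q$ ones are canonical, and that the size-$3$ ones are affinely equivalent to the $(\F_q,\alpha)$-construction. Throughout write $w=\frac{q+1}{m}=3$; note that $m\geq 2$ forces $q\geq 5$, and that the hypothesis $p\nmid(m-1)$ of the earlier theorems is automatic here, since $m-1=\tfrac{q-2}{3}$ and $p\mid q-2$ would make the odd prime $p$ divide $2$.

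\emph{Sizes of maximal cliques.} By Lemma~\ref{PaleyAsOAGraph}, $\mathrm{GP}(q^2,m)\cong X_{OA(3,q)}$, which by Lemma~\ref{OAGraph} is strongly regular with $\lambda=q\geq 1$; hence every edge lies in a triangle and no maximal clique has size $\leq 2$. By Lemma~\ref{TSziklai} the maximum cliques have size $q$ and are exactly the canonical cliques (the $m$-ary lines), and by Proposition~\ref{prop:upperbound} a maximal clique that is not maximum has size at most $(w-1)^2=4$; since $q>4$, a maximal clique of size $q$ is a maximum clique, hence canonical. So it remains to exclude a maximal clique of size $4$. I would do this in $AG(2,q)$, where adjacency means ``joined by an $m$-ary line'' and there are exactly $w=3$ $m$-ary parallel classes by Lemma~\ref{lem: numberoflines}. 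In a $4$-clique, either all four points lie on a single line — which is then $m$-ary, so the clique sits inside a canonical clique of size $q>4$ and is not maximal — or no three of the points are collinear. In the latter case the three lines through any one of the four points are pairwise non-parallel, hence realise all three $m$-ary directions; choosing the labelling so that the three perfect matchings of $K_4$ correspond to the three directions and translating one point to $0$, a one-line computation using only the linear independence of two vectors of distinct direction forces $P_3=P_1+P_2$ and then $P_1+P_2\parallel P_1-P_2$, which is impossible when $\operatorname{char}\F_q\neq 2$. (Alternatively, one can run the proof of Theorem~\ref{thm: stable} with $w=3$: the hypothesis $m\leq\frac{q+1}{4}$ there is used only to combine two subcases at the very end, whereas the subcase $|D_w|=w-2$ by itself already yields $|C'|\leq 2w-3=3$ once $p\nmid(m-1)$, while the subcase $|D_w|\leq w-3=0$ is vacuous for a maximal clique.) Hence maximal cliques have size $3$ or $q$.

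\emph{Classifying the size-$3$ cliques.} Let $\{A,B,C\}$ be a maximal clique of size $3$. Its points are not collinear, for three mutually adjacent collinear points lie on an $m$-ary line, hence inside a canonical clique of size $q>3$, contradicting maximality. Thus the three joining lines are pairwise non-parallel and realise all three $m$-ary directions. Put $\ell=BC$ and $P=A\notin\ell$: of the $w=3$ $m$-ary lines through $P$, one is parallel to $\ell$ and the other two meet $\ell$, necessarily at $B$ and $C$ (since $AB$ and $AC$ are $m$-ary and distinct). Therefore $\{A,B,C\}=\{P\}\cup\{R\in\ell: PR\text{ is }m\text{-ary}\}$ is precisely the $(\ell,P)$-construction, and this description is preserved by every automorphism of $\mathrm{GP}(q^2,m)$. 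It then suffices to move the pair $(\ell,P)$ to $(\F_q,\alpha)$ by an automorphism: first send $\ell$ to $\F_q$, using that the multiplications by $m$-th powers permute the $w$ $m$-ary lines through $0$ transitively together with the transitivity of translations (Lemma~\ref{AutGP}); then send the image of $P$ to $\alpha$ by an element of the stabiliser of $\F_q$, which is transitive on $\F_{q^2}\setminus\F_q$ (Lemma~\ref{AutGPSubgroup}). The composite automorphism carries $\{A,B,C\}$ onto the $(\F_q,\alpha)$-construction. For completeness, that construction is indeed a maximal clique of size $3$ by Corollary~\ref{cor: recover}, since $\gcd(q-1,\tfrac{q+1}{m}-2)=\gcd(q-1,1)=1$ and $p\nmid(m-1)$.

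I expect the main obstacle to be excluding maximal cliques of size $4$, i.e.\ improving the generic bound $(\tfrac{q+1}{m}-1)^2=4$ of Proposition~\ref{prop:upperbound} down to $3$; the short ``parallelogram'' argument in characteristic $\neq 2$ (or, equivalently, the careful reading of the proof of Theorem~\ref{thm: stable} in the borderline case $w=3$) is the crux. By comparison, identifying a size-$3$ maximal clique with the $(\F_q,\alpha)$-construction is a routine transitivity argument once the non-collinearity of its three points is observed.
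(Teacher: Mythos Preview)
Your proof is correct and takes a genuinely different route from the paper's. The paper's argument is shorter and more direct: after normalising so that $0,1\in C$ and picking $u\in C\setminus\F_q$, it observes that $|N(u)|=2$ forces $N(u)=\{0,1\}$, and then applies Theorem~\ref{thm: nearlymaximal} directly (with $\gcd(q-1,\tfrac{q+1}{m}-2)=\gcd(q-1,1)=1$ and $p\nmid(m-1)$) to conclude that $\{0,1,u\}=N(u)\cup\{u\}$ is already maximal, so $C=\{0,1,u\}$. This settles both the size and the structure in one stroke; no separate exclusion of $4$-cliques is needed.

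Your approach instead first bounds the size from above (Proposition~\ref{prop:upperbound} gives $|C|\leq 4$, then a self-contained parallelogram argument in $AG(2,q)$ rules out $4$), and afterwards classifies the size-$3$ cliques by recognising each as an $(\ell,P)$-construction and transporting $(\ell,P)$ to $(\F_q,\alpha)$ via the automorphism group. The geometric exclusion of $4$-cliques is pleasant because it avoids invoking Theorem~\ref{thm: nearlymaximal} altogether; it does, however, rely on two small claims you leave implicit. First, the dichotomy ``all four collinear or no three collinear'' needs one line: if exactly three points of a $4$-clique lie on an $m$-ary line $\ell$ and the fourth point $D$ is off $\ell$, then the three distinct $m$-ary lines $DA,DB,DC$ all meet $\ell$, so none has the direction of $\ell$; but only two other $m$-ary directions exist, forcing two of them to coincide, a contradiction. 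Second, that the three perfect matchings of $K_4$ correspond to the three $m$-ary directions follows from a short count: each direction appears exactly once at each vertex, hence on exactly two edges, and these two edges must be disjoint since no three points are collinear. With these remarks your argument is complete.
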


\begin{proof}
By Lemma~\ref{TSziklai}, a maximum clique must be a canonical clique. Let $C$ be a maximal clique in $\mathrm{GP}(q^2, \frac{q+1}{3})$ which is not maximum. Similar to the proof of Theorem~\ref{thm: stable}, without loss of generality, we may assume that $0,1 \in C$ (otherwise we can apply an affine transformation on $C$, which preserves the clique structure). Since $\F_q$ is a maximum clique and $C$ is a maximal clique which is not maximum, $C$ is not contained in $\F_q$. Thus, there is $u \in \F_{q^2} \setminus \F_q$, such that $u \in C$. Recall that by equation~\eqref{Nu}, we have $|N(u)|=2$. However, since $0,1 \in C$, this forces $N(u)=\{0,1\}$.

Note that $p \nmid (\frac{q+1}{3}-1)$ since $p \geq 3$, and that $\gcd(q-1,1)=1$. Thus, Theorem~\ref{thm: nearlymaximal} implies that $N(u) \cup \{u\}=\{0,1,u\}$ is a maximal clique. This forces $C=\{0,1,u\}$. So a maximal clique which is not maximum must have size $3$, and must be affinely equivalent to the maximal clique constructed in the $(\mathbb{F}_q, \alpha)$-construction. $\square$
\end{proof}


\begin{remark}\label{subgraph}\rm
In \cite[Theorem 2.1]{DM21}, a complicated formula in terms of finite field hypergeometric functions, for the number of complete subgraphs of order $4$ in a generalised Paley graph, is given. Historically, this is related to Ramsey theory and turns out to have a close connection with Paley graphs \cite{EPS81}. The above proposition leads to an elementary approach to compute the same quantity in $\mathrm{GP}(q^2,\frac{q+1}{3})$ as we have shown that the vertices of any complete subgraph of order $4$ must be contained in a canonical clique. Recall that canonical cliques correspond to lines in the affine plane $AG(2,q)$, thus the intersection of two distinct canonical cliques has size at most $1$. The number of canonical cliques is $3q$ and thus the number of complete subgraphs of order $4$ is given by 
$$
3q \cdot \binom{q}{4}.
$$
It would be interesting to find combinatorial proof for other results in \cite{DM21}. 
\end{remark}
\begin{example}\rm
Note that the Paley graph with order $25$ is $\mathrm{GP}(25,2)=\mathrm{GP}(q^2, \frac{q+1}{3})$, where $q=5$. Thus, by Remark~\ref{subgraph}, the number of complete subgraphs of order $4$ in the graph is given by $15 \cdot \binom{5}{4}=75.$
Alternatively, by writing $25=3^2+4^2$, \cite[Corollary 2.3]{DM21} gives the same answer
$$
\frac{25 \cdot 24 \cdot (16^2-64)}{2^9 \cdot 3}=75. \square
$$
\end{example}

\begin{proposition}
Assume $p>3$ and $q \equiv 3 \pmod 4$. In the generalised Paley graph $\mathrm{GP}(q^2, \frac{q+1}{4})$, a maximal clique has size either $4,5$ or $q$. Moreover, if a maximal clique has size $4$, then it must be affinely equivalent to $\{0,1,u,u+1\}$, where $u \in \F_{q^2} \setminus \F_q$, such that $u-1,u,u+1$ are $\frac{q+1}{4}$-th powers in $\F_{q^2}$, but $u-u^q+1$ is not. 
\end{proposition}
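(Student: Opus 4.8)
Set $m=\frac{q+1}{4}$ and $w:=\frac{q+1}{m}=4$. One first checks that the hypotheses of Theorem~\ref{thm: stable} hold here: $m\ge 2$ (since $q\ge 7$), $m\le\frac{q+1}{4}$, and $p\nmid(m-1)$ because $m-1=\frac{q-3}{4}\equiv -3\cdot 4^{-1}\not\equiv 0\pmod p$ as $p>3$. Hence a maximal clique that is not maximum has size at most $1+w(w-3)=5$, whereas a maximum clique has size $q$ by Lemma~\ref{TSziklai}. To complete the first assertion I would rule out sizes $1,2,3$. A $1$-clique is excluded since the graph is regular of positive degree; a $2$-clique is excluded since $\mathrm{GP}(q^2,m)$ is edge-transitive and contains a triangle (inside the clique $\F_q$), so every edge lies in a triangle; and for a triangle $T$, after applying an automorphism $\gamma\mapsto c\gamma+d$ with $c$ an $m$-th power (Lemma~\ref{AutGP}) to arrange $0,1\in T=\{0,1,z\}$, either $z\in\F_q$ (so $T\subset\F_q$ is not maximal) or $z\notin\F_q$, in which case $|N(z)|=w-1=3$ by~\eqref{Nu} and $\{0,1\}\subset N(z)$, giving $N(z)=\{0,1,z_0\}$ and $T\subsetneq\{0,1,z,z_0\}$.

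\textbf{Reducing a size-$4$ maximal clique.} Let $C$ be a maximal clique with $|C|=4$; since $q>5$ it is not maximum, and after an affine transformation we may assume $0,1\in C=\{0,1,u,v\}$. First, $C$ has at most two vertices in $\F_q$: it is not contained in $\F_q$ (a maximum clique of size $q>4$), and if exactly one of $u,v$, say $z$, were outside $\F_q$, then $N(z)$ would equal $\{0,1,w\}$ where $w$ is the remaining vertex, so $C=N(z)\cup\{z\}$; but $\gcd(q-1,w-2)=\gcd(q-1,2)=2$, so Corollary~\ref{cor: recover} says the unique maximal clique containing $C$ is $N(z)\cup\{z,z^q\}$, of size $5$ --- a contradiction. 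So $u,v\notin\F_q$. Next, $u-v\in\F_q$: otherwise the $m$-ary line through $u,v$ is not parallel to $\F_q$ and meets $\F_q$ in a point $e$; if $e\notin\{0,1\}$ then $e$ is adjacent to all of $C$ and $C\cup\{e\}$ is a larger clique, while if $e=0$ (the case $e=1$ being symmetric) then $0,u,v$ are collinear and the automorphism $\gamma\mapsto u^{-1}\gamma$ (legitimate as $u\in(\F_{q^2}^*)^m$) carries $C$ to a maximal $4$-clique containing $0,1$ with a third vertex in $\F_q$, which the previous step forbids. Hence $v=u+c$ for some $c\in\F_q^*$.

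\textbf{Identifying the configuration.} Put $P:=\{\,c'\in\F_q: u+c'\text{ is an }m\text{-th power in }\F_{q^2}\,\}$; thus $\{u+c':c'\in P\}$ is the set of points of the $m$-ary line $\ell:=u+\F_q$ adjacent to $0$. Among the $w=\frac{q+1}{m}=4$ $m$-ary lines through $0$ (Lemma~\ref{lem: numberoflines}), only $\F_q$ is parallel to $\ell$, and the other three each meet $\ell$ in one point, these points being distinct; hence $|P|=w-1=3$. Also $0\in P$ (as $u\sim0$) and $-1\in P$ (as $u\sim1$). Since $v=u+c$ is adjacent to $0$ and to $1$ we get $c\in P$ and $c-1\in P$, so $c\in P\cap(P+1)$, and likewise $0\in P\cap(P+1)$. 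If $|P\cap(P+1)|=3$ then $P=P+1$, forcing $p\mid|P|=3$, impossible for $p>3$; so $P\cap(P+1)=\{0,c\}$ with $c\ne0$. Writing $P=\{-1,0,a\}$ and comparing with $P+1=\{0,1,a+1\}$, one checks that the only possibilities are $a=1$ (whence $c=1$, $P=\{-1,0,1\}$) and $a=-2$ (whence $c=-1$, $P=\{-2,-1,0\}$), the latter reducing to the former after replacing $u$ by $u-1$. Thus $u-1,u,u+1$ are $m$-th powers and $C$ is affinely equivalent to $\{0,1,u,u+1\}$. Finally, $\gcd(m,q)=1$ makes the Frobenius $\gamma\mapsto\gamma^q$ preserve $m$-th powers, and $\alpha\in(\F_{q^2}^*)^m$ by Lemma~\ref{calpha} (since $m\mid\frac{q+1}{2}$); therefore $u^q$ is adjacent to $0$, to $1$, and to $u$ (because $u-u^q\in\alpha\F_q^*$ as $\alpha^q=-\alpha$ and $u\notin\F_q$), while $u^q\notin C$, so maximality of $C$ forces $u^q\not\sim u+1$, i.e. $u^q-u-1\notin(\F_{q^2}^*)^m$; since $-1\in(\F_{q^2}^*)^m$ this says exactly that $u-u^q+1$ is not an $m$-th power, as claimed.

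\textbf{Main obstacle.} The genuinely delicate part is the structural reduction in the second paragraph --- especially ruling out the non-parallel configuration, where one must treat separately the sub-possibility that the line joining the two vertices outside $\F_q$ passes through $0$ or $1$; the device there is to apply a multiplicative automorphism to move $C$ into a clique with three vertices in $\F_q$ and then invoke the uniqueness clause of Corollary~\ref{cor: recover}. The remaining combinatorial step (the analysis of $P\cap(P+1)$) is routine once $p>3$ is used.
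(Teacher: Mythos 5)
Your proof is correct and follows essentially the same route as the paper: the size bound via Theorem~\ref{thm: stable}, the case split on $|C\cap\F_q|$ with Corollary~\ref{cor: recover} handling the three-in-$\F_q$ case, the reduction to $v=u\pm 1$ (your $P\cap(P+1)$ enumeration is the paper's $N(v)=N(u)+(v-u)$ computation in different clothing), and the final obstruction coming from the clique $\{0,1,u,u^q\}$. The only added value is that you explicitly rule out maximal cliques of sizes $1$, $2$, $3$, which the paper treats more tersely.
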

\begin{proof}
Let $C$ be a maximal clique in $\mathrm{GP}(q^2, \frac{q+1}{4})$ which is not maximum. Since $p>3$, we have $p \nmid 3$. Thus, by Theorem~\ref{thm: stable}, $|C|\leq 5$. Similar to the previous proofs, without loss of generality, we may assume that $0,1 \in C$ and there is $u \in C$ such that $u \notin \F_q$. Thus, we have $3 \leq |C| \leq 5$. It remains the show that $|C| \neq 3$, and that $|C|=4$ only if $C$ has a special structure.

If $|C|=3$, then we must have $C=\{0,1,u\}$ and $0,1 \in N(u)$. However, equation~\eqref{Nu} states that $|N(u)|=3$, so we can expand $C$ to a large clique, contradicting the assumption that $C$ is maximal. 

Finally we consider the case $|C|=4$. There are the following two cases:
\begin{itemize}
    \item $|C \cap \F_q|=3$. In this case we must have $C=\{u\} \cup N(u)$. However, since $\gcd(q-1,4-2)=2$, Theorem~\ref{thm: nearlymaximal} states that $C \cup \{u^q\}$ is maximal, a contradiction.
    \item  $|C \cap \F_q|=2$. In this case we have $C=\{0,1,u,v\}$, where $u,v \in \F_{q^2}\setminus \F_q$. 
    Let $N(u)=\{0,1,a\}$ and $N(v)=\{0,1,b\}$. Then $a \neq b$ for otherwise $\{0,1,u,v,a\}$ still forms a clique. 
    
    Assume that $u-v \notin \F_q$; then $u,v$ do not lie on a horizontal line. Since $u-v$ is a $\frac{q+1}{4}$-th power in $\F_{q^2}$, the $\frac{q+1}{4}$-ary line passing through $u$ and $v$ intersects a point in the line $\F_q$, and that the point is in $N(u) \cap N(v)=\{0,1\}$. However, note if $u,v,0$ are collinear, or $u,v,1$ are collinear, then we are back in the previous case by a proper affine tranformation on $C$. This forces $u-v \in \F_q$. Then we must have $N(v)=N(u)+(v-u)$. Since $N(u)=\{0,1,a\}$ and $N(v)=\{0,1,b\}$, we must have $v=u \pm 1$. We may assume that $v=u+1$. Then, we must have $N(u)=\{-1,0,1\}$, that is, $u-1,u,u+1$ are $\frac{q+1}{4}$-th powers in $\F_{q^2}$. Finally, recall that Theorem~\ref{thm: nearlymaximal} states that $\{0,1,u,u^q\}$ forms a clique. So we have to make sure that $u+1-u^q$ is not a $\frac{q+1}{4}$-th power in $\F_{q^2}$, for otherwise $\{0,1,u,u+1,u^q\}$ still forms a clique. $\square$
\end{itemize}
\end{proof}
As a final remark, in view of Example~\ref{counterexample}, the condition $p>3$ in the statement of the above proposition cannot be dropped.

\section*{Acknowledgments}
L. Shalaginov is supported by Russian Science Foundation according to the research project 22-21-20018. C. H. Yip is supported by a doctoral fellowship from the University of British Columbia. The authors thank Shamil Asgarli for a careful checking of the draft. C. H. Yip thanks Greg Martin and Joshua Zahl for helpful discussions.

\end{document}